\newcommand{\ov}{\overline}
\newcommand{\C}{ \mathbb{C}}
\newcommand{\D}{ \mathbb{D}}
\newcommand{\Z}{ \mathbb{Z}}
\newcommand{\R}{ \mathbb{R}}
\newcommand{\ran}{\operatorname{ran}}
\newcommand{\norm}[1]{\| #1 \|}
\newcommand{\inner}[1]{\langle #1 \rangle}
\newcommand{\N}{\mathbb{N}}
\newcommand{\T}{\mathbb{T}}
\newcommand{\h}{\mathcal{H}}
\newcommand{\HH}{\mathscr{H}}
\newcommand{\K}{\mathcal{K}}
\newcommand{\KK}{\mathscr{K}}
\newcommand{\minimatrix}[4]{\begin{bmatrix} #1 & #2 \\ #3 & #4 \end{bmatrix}  }
\newcommand{\rank}{\operatorname{rank}}
\renewcommand{\vec}[1]{{\bf #1}}
\renewcommand{\phi}{\varphi}
\numberwithin{equation}{section}
\theoremstyle{plain}
\newtheorem{Theorem}[equation]{Theorem}
\newtheorem{Proposition}[equation]{Proposition}
\newtheorem{Corollary}[equation]{Corollary}
\theoremstyle{definition}
\newtheorem{Definition}[equation]{Definition}
\newtheorem{Example}[equation]{Example}
\newtheorem{Remark}[equation]{Remark}
\begin{document}
\bibliographystyle{amsplain}

    \title{Model spaces: A survey}

    \author{Stephan Ramon Garcia}
    \address{   Department of Mathematics\\
            Pomona College\\
            Claremont, California\\
            91711 \\ USA}
    \email{Stephan.Garcia@pomona.edu}
    \urladdr{http://pages.pomona.edu/\textasciitilde sg064747}
    \thanks{First author partially supported by National Science Foundation Grant DMS-1265973.}

    \author{William T. Ross}
    \address{   Department of Mathematics and Computer Science\\
            University of Richmond\\
            Richmond, Virginia\\
            23173 \\ USA}
    \email{wross@richmond.edu}
    \urladdr{http://facultystaff.richmond.edu/~wross}

    \begin{abstract}
	This is a brief and gentle introduction, aimed at graduate students, to the subject of model subspaces of the Hardy space. 
    \end{abstract}

\maketitle

\section{Introduction} In the summer of 2013, the authors gave a series of lectures and minicourses in Montr\'eal, Lens, and Helsinki on the topic of model spaces.  In preparing for these lectures, we discovered the need for an easy introduction to model spaces suitable for the graduate students
who formed the intended audience for our lectures. 
The standard texts on the subject \cite{N1, N2, N3, Bercovici} are thorough and encyclopedic, but are sufficiently intimidating
that the beginner might find this whole beautiful subject out of reach. The purpose of this survey is to give the novice a 
friendly, albeit incomplete, introduction to model spaces.

Model spaces are Hilbert spaces of the form $(u H^2)^{\perp}$, where $u$ is an inner function, 
$H^2$ is the classical Hardy space on the open unit disk $\D$, and $\perp$ denotes the orthogonal complement in $H^2$. 
On a functional analysis level, model spaces are the orthogonal complements of the nontrivial invariant subspaces of the 
unilateral shift $S f = z f$ on $H^2$.  These subspaces were characterized as $u H^2$ by Beurling in his famous 1949 paper \cite{B}. 
As such, the spaces $(u H^2)^{\perp}$ are the invariant subspaces of the backward shift operator $S^{*} f = (f - f(0))/z$ on $H^2$. 
However, unlike the spaces $u H^2$ which are simple to understand (i.e., all $H^2$ multiples of the inner function $u$), the model spaces $(u H^2)^{\perp}$ are much more 
troublesome.  For instance, it is not immediately clear which functions actually belong $(u H^2)^{\perp}$ or what properties these functions have. 

A major breakthrough in the study of model spaces occurred in 1970, with the publication of the seminal paper of Douglas, Shapiro, and Shields \cite{DSS}.  Extending some partial results of earlier authors, they showed that functions in $(u H^2)^{\perp}$ have analytic continuations in the same neighborhoods of points on the unit circle as does $u$. However, a generic inner function $u$ need not have an analytic continuation across any point of the unit circle $\T$ and thus a new type of continuation was needed. Fortunately this type of continuation, called a \emph{pseudocontinuation}, was discovered and formalized in two earlier papers \cite{MR0241614, MR0267080} of Shapiro and indeed turned out to be the determining characterization of functions in $(u H^2)^{\perp}$. In fact, a notion of continuation more general than analytic continuation was already being discussed, in various forms (rational approximation for example), often never actually being called a ``continuation.'' For example an earlier paper of Tumarkin \cite{Tum1} discussed functions in $(u H^2)^{\perp}$ via controlled approximation by rational functions.
 
From these beginnings, the function theoretic aspects of model spaces have truly grown. We now understand much of the subtle relationship between the boundary behavior of functions in $(u H^2)^{\perp}$ and the angular derivative of $u$ through the papers of Ahern and Clark \cite{AC70, AC70a}.

On an operator theory level, work of Sz.-Nagy and Foias showed that the compression of the unilateral shift $S$ to a model space $(u H^2)^{\perp}$ is a model for a certain class of contractions \cite{MR2760647}. This seminal work has been studied by many people and continues to have relevance in operator theory \cite{RR}.  Clark later examined unitary rank-one perturbations of this compressed shift and was able to come up with an exact spectral realization of this unitary operator using a family of measures that now bear his name \cite{MR0301534}. These measures enjoy truly fascinating properties that have been harnessed by Aleksandrov \cite{MR1039571, MR1734326}, Poltoratski \cite{MR1223178}, Sarason \cite{MR2198367, MR1289670, Sarason}, 
and others to explore fine boundary properties of the inner function $u$ as well as completeness of families of reproducing kernels in $(u H^2)^{\perp}$. These measures have even appeared in a somewhat different form in mathematical physics \cite{Simon-Wolff, Simon}.

Model spaces have also been helpful is examining nearly invariant subspaces (subspaces that contain $f/z$ whenever they contain a function $f$ satisfying $f(0) = 0$) \cite{Sa88}. These nearly invariant subspaces have been useful in developing characterizations of the invariant subspaces for the shift $f \mapsto z f$ on Hardy spaces of planar domains with holes or slits \cite{MR2548414, MR1371943, Hi88}.

Though model spaces have many connections to old ideas in analysis (analytic continuation, factorization, pseudocontinuation, etc.), they continue to be relevant. Recent work of Makarov and Poltoratski \cite{MP05} show that a spectral realization of certain selfadjoint Schr{\" o}dinger operators can be realized through model spaces and Clark measures. 
Model spaces also make important connections to the subject of complex symmetric operators \cite{CCO, G-P, G-P-II, SNCSO, CSPI}, a certain class of Hilbert space operators that are frequently modeled by compressions of Toeplitz operators to model spaces \cite{GR-TTO, STZ, TTOSIUES}.

As the subject of model spaces is quite vast, we again emphasize that these notes are not meant to be an encyclopedic treatment. These notes are instead meant to give the beginning student a reason to want to study this material and to provide the means for them to take their first few steps into this rich and fertile territory. 

\section{Preliminaries}
	Before proceeding, we start with a brief review of Hardy space theory. 
	The material presented in this section is now considered classical and can be found in many standard texts 
	\cite{Duren, Garnett, Hoffman, Koosis}.
	A relatively new text that might be more suitable for a student who is new to Hardy spaces is \cite{MR2270722}. 
	
\subsection{Lebesgue spaces}
	Let $m$ denote normalized Lebesgue measure on the unit circle $\T$ (i.e., $m = d \theta/2\pi$) and let 
	$L^2 := L^2(\T, m)$ denote the space of $m$-measurable (i.e., Lebesgue measurable) functions $f: \T \to \C$ such that
	\begin{equation*}
		\norm{f} := \left(\int_{\T} |f(\zeta)|^2 dm(\zeta) \right)^{\frac{1}{2}}
	\end{equation*}
	is finite.  As such, $L^2$ is a Hilbert space endowed with the inner product
	\begin{equation*}
		\langle f, g \rangle := \int_{\T} f(\zeta) \ov{g(\zeta)} \,dm(\zeta).
	\end{equation*}
	A simple calculation using the fact that $m(\T) = 1$ 
	shows that the family of functions $\{\zeta \mapsto \zeta^n: n \in \Z\}$ is an orthonormal basis for $L^2$. 
	The coefficients 
	\begin{equation*}
		\widehat{f}(n) := \langle f, \zeta^n\rangle = \int_{\T} f(\zeta) \overline{\zeta}^n dm(\zeta)
	\end{equation*}
	of a function $f$ in $L^2$ with respect to this basis are called the (complex) \emph{Fourier coefficients} of $f$.  In light of Parseval's Identity
	\begin{equation*}
		\norm{f}^2 = \sum_{n \in \Z} |\widehat{f}(n)|^2,
	\end{equation*}
	we see that the $L^2$ norm of $f$ coincides with the norm of the sequence $\{\widehat{f}(n): n \in \Z\}$
	of Fourier coefficients in the space $\ell^2(\Z)$ of all square-summable sequences on $\Z$.
	We therefore identify the Hilbert spaces $L^2$ and $\ell^2(\Z)$ via $f \leftrightarrow \{\widehat{f}(n): n \in \Z\}$. 	

	We also require the space $L^{\infty}:= L^{\infty}(\T)$ of all essentially bounded functions
	on $\T$ which, when equipped with the norm
	\begin{equation*}
		\norm{f}_{\infty} := \operatorname{ess-sup}_{\zeta \in \T} |f(\zeta)|,
	\end{equation*}
	becomes a Banach algebra.  We also remark that for any $\phi$ in $L^{\infty}$, the multiplication operator $f \mapsto \phi f$
	on $L^2$ is bounded and has operator norm equal to $\norm{\phi}_{\infty}$.

\subsection{Hardy spaces}
		
	For an analytic function $f$ on $\D$ the integral means
	\begin{equation*}
		\int_{\T} |f(r\zeta)|^2\,dm(\zeta)
	\end{equation*}
	are increasing as a function of $r$ on $(0, 1)$. Indeed, if $f(z) = \sum_{n=0}^{\infty} a_n z^n$, then 
	$$\int_{\T} |f(r \zeta)|^2 dm(\zeta) = \sum_{n=0}^{\infty} |a_n|^2 r^{2 n},$$
	which is clearly increasing in $r$.
	This leads us to define 
	the \emph{Hardy space} $H^2$ as those $f$ for which
	\begin{equation}\label{eq:HpNorm}
		\norm{f} :=  \lim_{r\to 1^-} \left(\int_{\T} |f(r\zeta)|^2\,dm(\zeta)
		\right)^{\frac{1}{2}}
	\end{equation}
	is finite. 
	It is no accident that we use $\|\cdot\|$ to denote both the norm in $L^2$ and in $H^2$. 
	Indeed, classical work of Fatou and Riesz show that the {\em radial 
	limit}\footnote{It turns out that each $f$ in $H^2$ has a finite {\em non-tangential limit} 
		at $\zeta$ for almost every $\zeta$. By this we mean that the limit of $f(z)$ exists as $z$ approaches 
		$\zeta$ in every \emph{Stolz region} $\{z \in \D: |z - \zeta| < \alpha (1 - |z|)\}$, $\alpha > 1$.}
	\begin{equation}\label{boundary-f}
		f(\zeta) := \lim_{r \to 1^{-}} f(r \zeta)
	\end{equation}
	exists for $m$-almost every $\zeta$ in $\T$ and 
	it turns out that the $H^2$ norm of $f$ and the $L^2$ norm of its boundary function, defined in \eqref{boundary-f}, coincide. 
	In this manner, we often regard $H^2$ as a closed subspace of $L^2$.  As such, $H^2$ inherits the structure
	of a Hilbert space from $L^2$.
	
	Let $H^{\infty}$ denote the space of all bounded analytic functions on $\D$,
	endowed with the supremum norm
	\begin{equation*}
		\norm{f}_{\infty} := \sup_{z \in \D} |f(z)|.
	\end{equation*}
	In a similar manner, the radial boundary function of an $H^{\infty}$ function belongs to $L^{\infty}$ and one has a corresponding
	maximum modulus type result
	\begin{equation*}
		\sup_{z \in \D} |f(z)| = \operatorname{ess-sup}_{\zeta \in \T} |f(\zeta)|,
	\end{equation*}
	that allows us to view $H^{\infty}$ as a Banach subalgebra of $L^{\infty}$.  Moreover, $H^{\infty}$ happens to be the multiplier
	algebra for $H^2$, meaning that the operator of multiplication by an analytic function $\phi$
	on $H^2$ (i.e., $f \mapsto \phi f$) is bounded if and only if $\phi$ belongs to $H^{\infty}$. 
	The norm of this multiplication operator is precisely $\|\phi\|_{\infty}$.
	
	The inner product on the Hardy space $H^2$ is given by
	\begin{equation*}
		\inner{f,g} =  
		\int_{\T} f(\zeta)\overline{g(\zeta)}\,dm(\zeta) = \sum_{n=0}^{\infty} a_n \overline{b_n},
	\end{equation*}
	where $f(z) = \sum_{n=0}^{\infty} a_n z^n$ and $g(z) = \sum_{n=0}^{\infty} b_n z^n$ denote typical elements of $H^2$.
	In other words, we have a natural identification of $H^2$ with the sequence space $\ell^2(\N)$, where
	each $f$ in $H^2$ is identified with its sequence of Taylor coefficients $\{a_n\}_{n \geqslant 0}$.
		
	Of great importance is the manner in which $H^2$ sits inside of $L^2$.  
	If $f(z) = \sum_{n=0}^{\infty} a_n z^n$ belongs to $H^2$, 
	then the almost everywhere defined boundary function $f$ has an associated Fourier series 
	\begin{equation*}
		f \sim \sum_{n=0}^{\infty}a_n \zeta^n
	\end{equation*}
	which belongs to the first component in the direct sum 
	\begin{equation*}
		L^2 = H^2 \oplus \overline{zH^2},
	\end{equation*}
	where 
	\begin{equation*}
		\overline{zH^2} = \{ \overline{zh} : h \in H^2\}.
	\end{equation*}
	In terms of Fourier coefficients,
	\begin{equation*}
		H^2 = \bigvee \{ z^n : n \geqslant 0\}, \qquad \overline{zH^2} = \bigvee\{ z^n : n \leqslant -1\},
	\end{equation*}
	where $\bigvee$ denotes the closed linear span in $L^2$. 
	In particular, note that 
	\begin{equation*}
		a_n = 
		\begin{cases}
			\widehat{f}(n) & \text{if $n \geqslant 0$}, \\
			0 & \text{if $n < 0$},
		\end{cases}
	\end{equation*}
	and that the polynomials in $z$ are dense in $H^2$.
	These results are summarized in the following diagram:	
	\begin{equation*}
		\large
		\begin{array}{ccc}
			H^2 & \overset{\text{\tiny identified}}{\longleftarrow\!\longrightarrow} & \ell^2(\N) \\[5pt]
			\bigcap & & \bigcap \\[8pt]
			L^2 & \overset{\text{\tiny identified}}{\longleftarrow\!\longrightarrow} & \ell^2(\Z)
		\end{array}
	\end{equation*}
	
\subsection{The Cauchy-Szeg\H{o} kernel}\label{Subsection:CSZK}			
	In light of the inequality
	\begin{equation*}
		|f(\lambda)| 
		 \leqslant \sum_{n=0}^{\infty} |a_n| |\lambda|^n 
		 \leqslant \left(\sum_{n=0}^{\infty} |a_n|^2\right)^{\frac{1}{2}} \left(\sum_{n=0}^{\infty} |\lambda|^{2n}\right)^{\frac{1}{2}}
		 = \frac{\norm{f}}{\sqrt{1 - |\lambda|^2}},
	\end{equation*}
	which holds for all $\lambda$ in $\D$ and all $f$ in $H^2$, it follows that for fixed $\lambda \in \D$ the point evaluation
	functionals $f \mapsto f(\lambda)$ are bounded on $H^2$ and hence, by the Riesz Representation Theorem, must be of the form
	\begin{equation}\label{eq:CSzK}
		f(\lambda) =\inner{ f, c_{\lambda}}
	\end{equation}
	for some $c_{\lambda}$ in $H^2$.  In fact, it is not hard to show that
	\begin{equation} \label{szego-k}
		c_{\lambda}(z) = \frac{1}{1 - \overline{\lambda}z}, \quad \lambda \in \D,
	\end{equation}
	which is called the \emph{Cauchy-Szeg\H{o} kernel} or, perhaps
	more frequently, the \emph{Cauchy kernel}.  In more general terms, one says that the Cauchy-Szeg\H{o} kernel
	is the \emph{reproducing kernel} for $H^2$.  Of great interest to us are other reproducing kernels, which satisfy equations
	analogous to \eqref{eq:CSzK} on various subspaces of $H^2$ (see Section \ref{Section:RK}).	
	
	Before proceeding, we should also remark that the reproducing formula \eqref{eq:CSzK} 
	is simply a restatement of the identity
	\begin{equation*}
		f(\lambda) = \sum_{n=0}^{\infty} a_n \lambda^n 
		= \big< (a_0,a_1,a_2,\ldots), (1 , \overline{\lambda}, \overline{\lambda}^2,\ldots) \big>_{\ell^2(\N)}.
	\end{equation*}
	When written as a contour integral, \eqref{eq:CSzK} reduces to the Cauchy Integral Formula 
	\begin{equation*}
		f(\lambda) = \int_{\T} \frac{f(\zeta)}{1 - \lambda \overline{\zeta}}\,dm(\zeta)
	\end{equation*}
	for $H^2$ functions (recall from \eqref{boundary-f} that for each $f$ in $H^2$, 
	the almost everywhere defined boundary function $\zeta \mapsto f(\zeta)$ belongs to $L^2$, 
	allowing the preceding integral to be well-defined). 
	Along similar lines, for fixed $n \geqslant 0$ we have
	\begin{equation*}
		f^{(n)}(\lambda) = \big< f, c_{\lambda}^{(n)} \big>,
	\end{equation*}
	where
	\begin{equation*}
		 c_{\lambda}^{(n)}(z) = \frac{n! \overline{\lambda}^{n}}{(1 - \overline{\lambda} z)^{n + 1}}
	\end{equation*}
	is the $n$th derivative of $c_{\lambda}$ with respect to the variable $z$.

	\begin{Proposition}\label{Prop:CKLI}
		The set $\{c_{\lambda} : \lambda \in \D\}$ is linearly independent.
	\end{Proposition}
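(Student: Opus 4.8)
The plan is to reduce to finite subsets and then exploit the reproducing property \eqref{eq:CSzK}. By definition, a set of vectors is linearly independent precisely when each of its finite subsets is, so it suffices to fix distinct points $\lambda_1, \ldots, \lambda_n$ in $\D$ together with scalars $\alpha_1, \ldots, \alpha_n$ satisfying
\[
\sum_{j=1}^n \alpha_j c_{\lambda_j} = 0
\]
as an element of $H^2$, and to deduce that every $\alpha_j$ vanishes.

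First I would pair this relation against an arbitrary $f$ in $H^2$. Using the conjugate-linearity of the inner product in its second slot together with the reproducing identity $\inner{f, c_\lambda} = f(\lambda)$ from \eqref{eq:CSzK}, this gives
\[
0 = \Big\langle f, \sum_{j=1}^n \alpha_j c_{\lambda_j} \Big\rangle = \sum_{j=1}^n \overline{\alpha_j}\, f(\lambda_j),
\]
an identity valid for every $f$ in $H^2$. In this way the entire problem is transferred from a statement about the kernels $c_{\lambda_j}$ to a statement about the point evaluations of $H^2$ functions at the $\lambda_j$.

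The key step is then to insert enough test functions $f$ to isolate the individual coefficients. Because the points $\lambda_j$ are distinct, for each fixed index $k$ there is a polynomial $p_k$ with $p_k(\lambda_j) = \delta_{jk}$; explicitly, one may take the Lagrange interpolating polynomial $p_k(z) = \prod_{j \neq k} (z - \lambda_j)/(\lambda_k - \lambda_j)$. Since every polynomial belongs to $H^2$ (the monomials $z^n$ form part of the orthonormal basis described earlier), substituting $f = p_k$ into the displayed identity collapses the sum to $\overline{\alpha_k} = 0$, hence $\alpha_k = 0$. As $k$ was arbitrary, the chosen finite subset is linearly independent, which completes the argument.

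I do not anticipate any serious obstacle here; the only point demanding a little care is the production of $H^2$ test functions that separate the points, and polynomials dispatch this immediately. An alternative route would avoid the inner product altogether, reading $\sum_j \alpha_j c_{\lambda_j}$ as the rational function $\sum_j \alpha_j/(1 - \overline{\lambda_j} z)$, whose poles $1/\overline{\lambda_j}$ are distinct and lie outside $\overline{\D}$, so that a nontrivial combination cannot vanish identically. The reproducing-kernel argument is cleaner, however, and better showcases the role of \eqref{eq:CSzK}, so that is the version I would present.
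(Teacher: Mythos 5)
Your argument is correct and is essentially the paper's own proof: both pair the vanishing linear combination against $H^2$ test functions via the reproducing identity \eqref{eq:CSzK} and then invoke Lagrange interpolation. The only cosmetic difference is that you use the delta polynomials $p_k(\lambda_j)=\delta_{jk}$ to isolate each coefficient separately, whereas the paper uses a single interpolating polynomial with $p(\lambda_i)=\alpha_i$ to conclude $\sum_i |\alpha_i|^2=0$ in one stroke.
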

		
	\begin{proof}
		If $\lambda_1,\lambda_2,\ldots,\lambda_n$ are distinct elements of $\D$ and $$\sum_{i=1}^n \alpha_i c_{\lambda_i} = 0,$$
		then $$\sum_{i=1}^n \overline{\alpha_i} f(\lambda_i) = 0$$ holds for all $f$ in $H^2$.
		The Lagrange Interpolation Theorem provides us with a polynomial $p(z)$ satisfying $p(\lambda_i) = \alpha_i$
		so that $\sum_{i=1}^n |\alpha_i|^2 = 0$.  Thus $\alpha_i = 0$ for $i=1,2,\ldots,n$.
	\end{proof}

\subsection{Canonical factorization}
	Although $H^2$ is a linear space, it is its multiplicative structure that reveals
	its true function-theoretic depth.  We recall here the main ingredients necessary to describe
	the canonical factorization of $H^2$ functions.
	
	\begin{Definition}
		An \emph{inner function} is a bounded analytic function $u$ on $\D$ such that $|u(\zeta)|=1$ for almost every $\zeta$ in $\T$.\footnote{The 
			reader is reminded that whenever we use the term ``boundary function'' or write 
			$f(\zeta)$ for $f \in H^2$ and $\zeta \in \T$, we are referring to the almost everywhere defined radial (non-tangential) limit in \eqref{boundary-f}.}
	\end{Definition}

	The simplest nontrivial example of an inner function is a M\"obius transformation of the form
	\begin{equation*}
		e^{i\theta}\frac{w - z}{1 - \overline{w} z},
	\end{equation*}
	where $|w| < 1$ and $0\leqslant \theta < 2\pi$, which is easily seen to be an automorphism of $\D$ mapping $\T$ onto $\T$.  More generally, 
	if $\{z_n\}_{n \geqslant 1}$ is a sequence of points in $\D\backslash\{0\}$, repeated according to multiplicity, then
	the \emph{Blaschke condition}	
	\begin{equation}\label{eq:BlaschkeCondition}
		\sum_{n = 1}^{\infty} (1 - |z_n|) < \infty
	\end{equation}
	is necessary and sufficient for the convergence (uniformly on compact subsets of $\D$) 
	of the corresponding \emph{Blaschke product}
	\begin{equation}\label{eq:BlaschkeProduct}
		B(z) := z^m \prod_{n = 1}^{\infty} \frac{|z_n|}{z_n} \frac{z_n - z}{1 - \overline{z_n} z},
	\end{equation}
	where $m$ denotes a nonnegative integer.  
	With some work, one can show that every Blaschke product is an inner function \cite[Thm.~2.4]{Duren} (the only thing left to check is that the boundary function is unimodular almost everywhere).  
	The importance of these functions stems from the fact that the Blaschke condition \eqref{eq:BlaschkeCondition} 
	completely characterizes the zero sets for $H^2$ functions.  
	
	\begin{Theorem}
		A sequence $\{z_n\}_{n \geqslant 1} \subset \D$, repeated according to multiplicity, 
		is the zero set of a nonconstant $H^2$ function if and only if it satisfies the Blaschke condition \eqref{eq:BlaschkeCondition}.
	\end{Theorem}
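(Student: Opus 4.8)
The plan is to treat the two implications separately. The sufficiency direction is essentially free given the material developed just above: if the Blaschke condition \eqref{eq:BlaschkeCondition} holds, then the Blaschke product $B$ in \eqref{eq:BlaschkeProduct} converges uniformly on compact subsets of $\D$ and is inner, hence lies in $H^{\infty} \subseteq H^2$; it is nonconstant and, by uniform convergence on compacta, has zero set precisely $\{z_n\}$ (together with the factor $z^m$ contributing a zero of order $m$ at the origin). So $B$ is the desired nonconstant $H^2$ function, and only the necessity direction requires real work.

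For necessity, suppose $f \in H^2$ is nonconstant with zero set $\{z_n\}$ repeated according to multiplicity. First I would reduce to the case $f(0) \neq 0$: writing $f(z) = z^m g(z)$ with $g \in H^2$ and $g(0) \neq 0$ (division by $z^m$ preserves the $H^2$ norm since $z$ is unimodular on $\T$), the zeros at the origin are finite in number and each contributes the term $1 - 0 = 1$ to the Blaschke sum, so it suffices to verify the condition for the zeros of $g$ in $\D \setminus \{0\}$.

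The key tool is Jensen's formula. Fixing $r \in (0,1)$ and letting $a_1, \dots, a_N$ denote the zeros of $g$ in the disk $|z| < r$, Jensen's formula gives
\begin{equation*}
    \log|g(0)| + \sum_{k=1}^{N} \log\frac{r}{|a_k|} = \int_{\T} \log|g(r\zeta)|\,dm(\zeta).
\end{equation*}
Because each summand $\log(r/|a_k|)$ is nonnegative, the left side dominates $\sum_{k=1}^{N} \log(r/|a_k|)$. To bound the right side independently of $r$, I would invoke the concavity of the logarithm (Jensen's inequality) together with the monotonicity of the integral means to obtain
\begin{equation*}
    \int_{\T} \log|g(r\zeta)|\,dm(\zeta) \leq \log\left(\int_{\T} |g(r\zeta)|^2\,dm(\zeta)\right)^{\frac{1}{2}} \leq \log\norm{g}.
\end{equation*}
Combining these estimates yields $\sum_{k=1}^{N} \log(r/|a_k|) \leq \log\norm{g} - \log|g(0)|$, a bound uniform in both $r$ and $N$. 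Letting $r \to 1^-$ then forces $\sum_{n} \log(1/|z_n|) < \infty$.

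Finally, I would convert this into the Blaschke condition via the elementary comparison $1 - |z_n| \leq \log(1/|z_n|)$ on $(0,1]$, together with the asymptotic $\log(1/x) \sim 1 - x$ as $x \to 1^-$, which shows that $\sum_{n} (1 - |z_n|)$ and $\sum_{n} \log(1/|z_n|)$ converge or diverge together. The main obstacle is the necessity direction, and within it the crucial step is establishing the $r$-uniform upper bound for the Jensen integral; once Jensen's formula is in hand, everything else is bookkeeping and elementary comparison of series.
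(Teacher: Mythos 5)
Your argument is correct, and it is the standard one. Note that the paper itself offers no proof of this theorem: it appears in the preliminaries as classical background, with the reader directed to texts such as Duren, so there is no ``paper approach'' to compare against --- but the Jensen's-formula argument you give is exactly the proof found in those references. Both halves of your proposal are sound: the sufficiency direction correctly leans on the convergence of the Blaschke product \eqref{eq:BlaschkeProduct} under \eqref{eq:BlaschkeCondition}, and the necessity direction correctly combines the reduction to $g(0) \neq 0$, Jensen's formula, the concavity estimate $\int_{\T} \log|g(r\zeta)|\,dm \leqslant \log\bigl(\int_{\T}|g(r\zeta)|^2\,dm\bigr)^{1/2}$, the monotonicity of the integral means, and the elementary inequality $1 - x \leqslant \log(1/x)$. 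Two cosmetic points you may wish to make explicit: Jensen's formula as you state it requires $g$ to have no zeros on the circle $|z| = r$, so one should let $r \to 1^-$ through values avoiding the at most countably many exceptional radii; and the case of a finite (or empty) zero set should be dispatched at the outset, since there the Blaschke condition holds trivially. Neither affects the validity of the argument.
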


	Other examples of inner functions are furnished by the following construction.
	For a positive, finite, singular (with respect to $m$) Borel measure $\mu$ on $\T$, we claim that the analytic function
	\begin{equation}\label{eq:SingularInnerFunction}
		\qquad S_{\mu}(z) := \exp\left(-\int \frac{\zeta + z}{\zeta - z} d \mu(\zeta)\right), \quad z \in \D,
	\end{equation}
	is inner. Such a function is known as a \emph{singular inner function}.  First notice that for any $z \in \D$, 
	\begin{align*}
		|S_{\mu}(z)| 
		& = \exp\left[ \Re\left(-\int \frac{\zeta + z}{\zeta - z} d \mu(\zeta)\right)\right]\\
		& = \exp\left(-\int_{\T} \frac{1 - |z|^2}{|\zeta - z|^2} d \mu(\zeta)\right)\\
		& \leqslant 1
	\end{align*}
	since both the measure $\mu$ and the \emph{Poisson kernel}
	\begin{equation*}
		P_{z}(\zeta) := \frac{1 - |z|^2}{|\zeta - z|^2}, \tag{$\zeta \in \T, z \in \D$}
	\end{equation*} 
	are nonnegative.  Since $S_{\mu}$ belongs to $H^{\infty}$, it follows that
	$S_{\mu}$ has nontangential boundary values $m$-a.e.~on $\T$.  To show that these
	boundary values are almost everywhere unimodular, we require some basic facts from the theory of harmonic functions.  Let
	\begin{equation*}
		(D \mu)(w) := \lim_{t \to 0^{+}} \frac{\mu\big((e^{-i t} w, e^{i t} w)\big)}{2 t}  \tag{$w \in \T$}
	\end{equation*}
	denote the \emph{symmetric derivative} of $\mu$ on $\T$,
	where $(e^{-i t} w, e^{i t} w)$ denotes the circular arc subtended by the points $e^{-it} w$ and $e^{i t} w$.
	We also have the identity 
	\begin{equation}\label{Poisson-sym}
		\lim_{r \to 1^{-}} \int_{\T} P_{r w}(\zeta) d\mu(\zeta) = (D \mu)(w). \qquad \mbox{$m$-a.e. $w \in \T$}.
	\end{equation}
	Since $\mu$ is singular, it follows that $D \mu = 0$ holds $m$-almost everywhere and so
	it now follows from the identity 
	\begin{equation*}
		|S_{\mu}(z)| = \exp\left(-\int_{\T} P_{z}(\zeta) d\mu(\zeta)\right)
	\end{equation*}
	that $S$ has unimodular boundary values $m$-almost everywhere.  For instance, if $\mu = \delta_1$ denotes the point mass at $\zeta = 1$, then
	\begin{equation*}
		S_{\delta_1}(z) = \exp\left(\frac{z + 1}{z - 1}\right).
	\end{equation*}
	This type of inner function is often called an {\em atomic inner function}. 
	
	\begin{Theorem}
		Every inner function $u$ can be factored uniquely as
		\begin{equation*}
			u = e^{i \gamma} B_{\Lambda} S_{\mu},
		\end{equation*}
		where $\gamma \in [0,2\pi)$, $\Lambda$ is a Blaschke sequence, and $\mu$ is a positive singular measure on $\T$. 
		Conversely, any such product is inner. 
	\end{Theorem}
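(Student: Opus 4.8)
The plan is to handle the ``conversely'' clause first, since it is the easy direction, and then to establish existence by successively stripping the factors off of $u$, reserving uniqueness for the end. For the converse, I would note that a finite or convergent infinite product of inner functions is again inner: each factor is bounded by $1$ in modulus on $\D$ and unimodular $m$-a.e.\ on $\T$, so the product is bounded by $1$ on $\D$ while its boundary modulus is the (a.e.) product of unimodular boundary values, hence unimodular a.e. As we have already seen that the unimodular constant $e^{i\gamma}$, the Blaschke product $B_{\Lambda}$, and the singular inner function $S_{\mu}$ are each inner, the product $e^{i\gamma} B_{\Lambda} S_{\mu}$ is inner.

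For existence, I would first extract the zeros. Since $u \in H^{\infty} \subset H^2$ is nonconstant (the constant case being trivial), the theorem characterizing zero sets shows that the zero sequence $\Lambda = \{z_n\}$ of $u$, repeated with multiplicity, satisfies the Blaschke condition \eqref{eq:BlaschkeCondition}, so the corresponding Blaschke product $B_{\Lambda}$ of \eqref{eq:BlaschkeProduct} converges and is inner. The quotient $g := u / B_{\Lambda}$ then extends analytically and zero-free across the common zeros. To see that $g$ is inner, observe that $|B_{\Lambda}| = |u| = 1$ holds $m$-a.e.\ on $\T$, so the boundary function of $g$ is unimodular a.e.; and applying the maximum modulus principle to each finite partial product $u/B_N$ (whose boundary modulus equals $|u| \leqslant 1$) and letting $N \to \infty$ gives $|g| \leqslant 1$ throughout $\D$. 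Thus $g$ is a zero-free inner function.

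The heart of the matter is to realize the zero-free inner function $g$ as $e^{i\gamma} S_{\mu}$. Because $g$ is zero-free on the simply connected domain $\D$, it admits an analytic logarithm $g = e^{h}$, and then $-\Re h = -\log|g| \geqslant 0$ is a positive harmonic function on $\D$. The Herglotz representation theorem produces a positive finite Borel measure $\mu$ on $\T$ with $-\log|g(z)| = \int_{\T} P_z(\zeta)\,d\mu(\zeta)$. To see that $\mu$ is singular, I would invoke \eqref{Poisson-sym}: since $|g| = 1$ holds $m$-a.e.\ on $\T$, the left-hand side there tends to $0$ a.e., forcing $D\mu = 0$ $m$-a.e., which is exactly singularity of $\mu$. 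Finally, since $\Re\frac{\zeta+z}{\zeta-z} = P_z(\zeta)$ and a harmonic conjugate is determined up to an additive real constant, matching real parts identifies $h$ with $-\int \frac{\zeta+z}{\zeta-z}\,d\mu(\zeta) + i\gamma$, whence $g = e^{i\gamma} S_{\mu}$ and $u = e^{i\gamma} B_{\Lambda} S_{\mu}$.

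For uniqueness, given two such factorizations I would first match zero sets: the zeros of $u$ in $\D$ determine $\Lambda$ with multiplicity and hence $B_{\Lambda}$, so the Blaschke factors agree. Cancelling them reduces matters to the uniqueness of the representation $e^{i\gamma} S_{\mu}$ of a zero-free inner function, where $\mu$ is recovered as the boundary measure in the (unique) Herglotz representation of $-\log|g|$ and $\gamma$ is then forced. I expect the main obstacle to be the analytic step for $g$: justifying both that the quotient $u/B_{\Lambda}$ is bounded by $1$ in $\D$ (rather than merely having unimodular boundary values) and that the singularity of $\mu$ follows cleanly from the a.e.\ unimodularity of $g$ by way of \eqref{Poisson-sym}.
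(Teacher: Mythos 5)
The paper does not actually prove this theorem; it is stated as a classical result (the canonical factorization of inner functions, cf.\ Duren), so there is no in-text proof to compare against. Your argument is the standard F.~Riesz/Herglotz proof and it is correct: the converse direction is immediate; the extraction of $B_{\Lambda}$ via the Blaschke condition together with the maximum-modulus argument on the partial products $u/B_N$ (estimating on circles $|z|=\rho$ and letting $\rho\to 1$, then $N\to\infty$) is exactly the classical way to show $u/B_{\Lambda}$ is a zero-free inner function; and the Herglotz representation of the nonnegative harmonic function $-\log|g|$ correctly produces $\mu$ and, after matching harmonic conjugates up to an additive constant, the factor $e^{i\gamma}S_{\mu}$. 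The uniqueness argument (zeros determine $\Lambda$ and hence the normalized $B_{\Lambda}$, then uniqueness in the Herglotz representation determines $\mu$ and finally $\gamma$) is also sound.

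The only step I would ask you to make explicit is the singularity of $\mu$. The paper's display \eqref{Poisson-sym} gives $\lim_{r\to 1^-}\int_{\T}P_{rw}\,d\mu=(D\mu)(w)$ $m$-a.e., and the surrounding text only uses the implication ``$\mu$ singular $\Rightarrow D\mu=0$ a.e.'' You need the converse: that $(D\mu)(w)=0$ for $m$-a.e.\ $w$ forces $\mu\perp m$. This is true, but it rests on the Lebesgue differentiation theorem identifying $D\mu$ $m$-a.e.\ with the Radon--Nikodym derivative of the absolutely continuous part of $\mu$; citing that fact closes the one small gap in an otherwise complete argument.
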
		
	
	The reader is warned that, as an abuse of language, one frequently permits
	a function of the form $e^{i \gamma} B_{\Lambda}$ (resp.~$e^{i \gamma} S_{\mu}$) to be called
	a Blaschke product (resp.~a singular inner function).  For the sake of convenience, we adopt this
	common practice here in these notes.
	
\begin{Definition}\label{divides-def}
Let $u_1, u_2$ be inner functions and $f \in H^2$. 
\begin{enumerate}\addtolength{\itemsep}{0.25\baselineskip}
\item We say that $u_1$ {\em divides} $u_2$, written $u_1|u_2$, if $u_2/u_1 \in H^{\infty}$. 
\item We say that $u_1$ {\em divides} $f$, written $u_1|f$, if $f/u_1 \in H^2$. 
\item We say that $u_1$ and $u_2$ are {\em relatively prime} if the only inner divisors of both $u_1$ and $u_2$ are constant functions of unit modulus.
\end{enumerate}
\end{Definition}

For example, if $u_1$ and $u_2$ are Blaschke products with simple zeros, then $u_1$ divides $u_2$ if and only if  the zero set of $u_1$ is contained in the zero set
of $u_2$. Moreover, $u_1$ is relatively prime to $u_2$ if and only if $u_1$ and $u_2$ have no common zeros.

	\begin{Definition}
		An \emph{outer function} is an analytic function $F$ on $\D$ of the form 
		\begin{equation}\label{eq:Outer}
			F(z) = e^{i\gamma} \exp \left( \int_{\T}{\frac{\zeta+z}{\zeta-z} \phi(\zeta)\,dm(\zeta)} \right),
		\end{equation}
		where $\gamma$ is a real constant and $\phi$ is a real-valued function in $L^1$.
	\end{Definition}
	
	  The significance of the somewhat
	unwieldy formula in \eqref{eq:Outer} lies in the fact that 
	\begin{equation*}
		\log|F(z)|
		= \int_{\T} P_{z}(\zeta) \phi(\zeta) dm(\zeta),
	\end{equation*}
	from which
	it follows that $\log|F(z)|$ equals the harmonic extension to $\D$ of
	the boundary function $\phi:\T\to\R$ so that $\phi = \log|F|$ a.e.~on $\T$.
	In particular, an outer function is determined up to a unimodular constant factor by its modulus on $\T$.
	On the other hand, it is possible to show that every $f$ in $H^2$ that does not vanish identically satisfies
	\begin{equation*}
		\int_{\T} \log |f(\zeta)| \,dm(\zeta) > -\infty
	\end{equation*}
	and hence one can form the outer function \eqref{eq:Outer} corresponding to the boundary data $\phi = \log|f|$ \cite[Thm.~2.2]{Duren}.
	Putting this all together, we can now state the canonical factorization for $H^2$ functions \cite[Thm.~2.8]{Duren}.
	
	\begin{Theorem}
		Every function $f$ in $H^2 \backslash \{0\}$ has a unique factorization of the form
		\begin{equation}\label{eq:BSF}
			f = BSF
		\end{equation}
		where $B$ is a Blaschke product,
		$S$ is a singular inner function, and $F$ is an outer function in $H^2$.   
		Conversely, any product of the form \eqref{eq:BSF} belongs to $H^2$.
	\end{Theorem}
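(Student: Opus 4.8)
The plan is to establish existence by peeling off the three factors in turn, and then to derive uniqueness from the observation that each factor is pinned down, up to a unimodular constant, by data that $f$ already determines.

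For existence I would first extract the Blaschke factor. Let $\{z_n\}$ be the zero sequence of $f$, repeated according to multiplicity; since $f \in H^2 \setminus \{0\}$, the theorem characterizing zero sets guarantees that $\{z_n\}$ satisfies the Blaschke condition \eqref{eq:BlaschkeCondition}, so the Blaschke product $B$ of \eqref{eq:BlaschkeProduct} converges and is inner. Set $g := f/B$. The routine-but-essential point is that $g \in H^2$ with $\norm{g} \leqslant \norm{f}$: I would prove this by truncation, writing $B_N$ for the $N$th partial product (a finite Blaschke product, hence inner with $|B_N| = 1$ a.e.\ on $\T$), noting $\norm{f/B_N} = \norm{f}$, and passing to the limit $g = \lim_N f/B_N$ (locally uniformly) using that a family bounded in $H^2$ norm and convergent pointwise has its limit in $H^2$ with no larger norm. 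By construction $g$ is analytic and zero-free on $\D$.

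Next I would split $g$ into its singular-inner and outer parts. Because $g$ is zero-free, $\log|g|$ is harmonic on $\D$; since the boundary function satisfies $\log^{+}|g| \leqslant |g| \in L^2 \subset L^1$ while $\int_{\T} \log|g|\,dm > -\infty$, we have $\log|g| \in L^1$, so the outer function $F$ with boundary data $\phi = \log|g|$ is well defined, and having $|F| = |g| \in L^2$ on $\T$ it lies in $H^2$. Put $S := g/F$, which is analytic on $\D$ (as $F$ is zero-free) and satisfies $|S| = |g|/|F| = 1$ a.e.\ on $\T$. The crux of the whole argument is to show that $S$ is bounded by $1$ on $\D$, i.e.\ that $\log|g(z)| \leqslant \int_{\T} P_{z}(\zeta) \log|g(\zeta)|\,dm(\zeta) = \log|F(z)|$ for every $z \in \D$. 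This is a Jensen-type, least-harmonic-majorant inequality, and I expect it to be the main obstacle: it is precisely the step that uses the analytic structure of $g$ rather than the mere harmonicity of $\log|g|$, and it is what forces the defect between $\log|g|$ and the Poisson extension of its boundary values to be nonpositive. Granting it, $S$ is inner; being a quotient of zero-free functions it is zero-free, so by the inner factorization theorem $S$ is a unimodular constant times a singular inner function. Absorbing that constant into $F$ yields $f = BSF$ in the required form.

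For uniqueness, suppose $f = B_1 S_1 F_1 = B_2 S_2 F_2$. The zero set of $f$ coincides with the zero set of each $B_i$ (since $S_i$ and $F_i$ are zero-free), so $B_1$ and $B_2$ are the same Blaschke product up to a unimodular constant. Cancelling and comparing boundary moduli gives $|F_1| = |F_2|$ a.e.\ on $\T$ (because $|S_i| = 1$ there), whence $F_1$ and $F_2$ agree up to a unimodular constant, an outer function being determined by its modulus on $\T$; the leftover factor then forces the singular parts to coincide. Normalizing so that $B$ has the canonical form \eqref{eq:BlaschkeProduct}, $S$ carries no constant, and all unimodular ambiguity is collected in the factor $e^{i\gamma}$ of $F$, the factorization is unique. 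The converse is immediate: since $BS \in H^{\infty}$ with $\norm{BS}_{\infty} \leqslant 1$ and $F \in H^2$, the product $BSF$ lies in $H^2$ with $\norm{BSF} \leqslant \norm{F}$.
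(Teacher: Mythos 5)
The paper does not actually prove this theorem; it states it as classical and cites \cite[Thm.~2.8]{Duren}. Your argument is precisely the standard proof from that source: divide out the Blaschke product of the zeros via the truncation-and-norm-control argument, then split the zero-free remainder $g$ into outer and singular inner parts using the outer function built from the boundary data $\log|g|$, with uniqueness read off from the zero set, the boundary modulus, and the Herglotz representation of the singular measure. The structure is correct, and the converse direction is handled properly. The only thing to flag is the step you explicitly ``grant,'' namely the inequality
\begin{equation*}
\log|g(z)| \leqslant \int_{\T} P_{z}(\zeta)\, \log|g(\zeta)|\,dm(\zeta),
\end{equation*}
which is indeed the analytic heart of the theorem: in a complete write-up you would prove it by noting that $\log|g_r|$ (with $g_r(z)=g(rz)$) equals its Poisson integral, that $\log^{+}|g_r|\to\log^{+}|g|$ in $L^1$ by uniform integrability coming from $|g|\in L^2$, and then applying Fatou's lemma to the negative parts. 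Since you have correctly isolated this as the crux and everything else is sound, the proposal is essentially the textbook proof the paper points to rather than a gap-ridden or divergent one.
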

	
\subsection{Bounded type}

In order to adequately discuss the cyclic vectors for the backward shift operator, we need two more additional 
classes of meromorphic functions on $\D$. 

\begin{Definition}\label{bounded-type}
 Let $f$ be a meromorphic function on $\D$. 
\begin{enumerate}
\item We say $f$ is of {\em bounded type} if $f$ can be written as quotient of two bounded analytic functions. The set of functions of bounded type is denoted by $N$ (often called the Nevanlinna class). 
\item We say $f$ is in the {\em Smirnov class} if is the quotient of two bounded analytic functions where the denominator is an outer function. The Smirnov class is denoted by $N^{+}$. Notice in this case that $N^{+}$ is a space of analytic functions on $\D$ since the denominator is outer and outer functions have no zeros on $\D$. 
\end{enumerate}
\end{Definition}

\begin{Remark}\label{N-facts}
It is known that $H^2(\D) \subset N^{+}$ and that every $f \in N$ has finite non-tangential limits almost everywhere \cite[Ch.~2]{Duren}. Furthermore, if $f \in N^{+}$ and the boundary function belongs to $L^2$, then $f \in H^2$. This fact is no longer true for $f \in N$\footnote{Even if $f$ is analytic on $\D$, this is no longer true. Just consider the function $f(z) = \exp(-\frac{z + 1}{z - 1})$, which is the reciprocal of the atomic inner function defined earlier. This function belongs to $N$, is analytic on $\D$, has unimodular boundary values, but does not belong to $H^2$ since some integral estimates will show that $f$ does not have bounded integral means.}
\end{Remark}
	
\subsection{Beurling's Theorem}	

	Of supreme importance in the world of operator-related function theory are the shift operators.
	Chief among these is the \emph{unilateral shift} $S:H^2 \to H^2$ defined by
	\begin{equation*}
		[Sf](z) = zf(z),
	\end{equation*}
	or, in terms of Taylor coefficients, by
	$$S(a_0,a_1,\ldots) = (0,a_0,a_1,\ldots).$$ Because of its ubiquity in the realm of operator theory,
	one often refers to $S$ as \emph{the} shift operator. One easily sees that $S$ is an isometry that is not unitary. 
	The adjoint $S^{*}$ of the unilateral shift is the \emph{backward shift} $S^*:H^2\to H^2$ given by
	\begin{equation*}
		[S^*f](z) = \frac{f(z)-f(0)}{z},
	\end{equation*}
	or, in terms of Taylor coefficients, by
	$$S^*(a_0,a_1,\ldots) = (a_1,a_2,\ldots).$$
		
	If $u$ is inner, then the operator $f \mapsto u f$ is an isometry on $H^2$ and thus $u H^2$ is a subspace 
	(i.e., a closed linear manifold) of $H^2$.  Moreover, assuming that $u$ is a nonconstant inner function,  that $u H^2$ is a nontrivial invariant subspace for the operator $S$. 
	A celebrated theorem of Beurling \cite{B} (also see any of the standard texts 
	mentioned above for a complete proof and further discussions and generalizations) says that these are all of them. 

	\begin{Theorem}[Beurling's Theorem]
		The nontrivial invariant subspaces of $H^2$ for the unilateral shift $S$ are precisely the subspaces
		$$u H^2 := \{ u h : h\in H^2\},$$ where $u$ is an inner function.		
		Moreover, $f$ is cyclic for $S$, i.e., $$\bigvee\{ f,Sf,S^2f,\ldots\} = \bigvee \{ qf: \text{$q$ is a polynomial}\} = H^2,$$
		if and only if $f$ is an outer function.
	\end{Theorem}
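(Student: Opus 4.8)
The plan is to treat the two assertions separately, beginning with the characterization of the invariant subspaces. The containment ``$uH^2$ is invariant'' is the routine direction: since multiplication by an inner $u$ is an isometry on $H^2$, the set $uH^2$ is closed, and $S(uh) = u(zh)$ shows it is $S$-invariant; when $u$ is nonconstant it is a proper subspace, since $1/u \notin H^2$ forces $1 \notin uH^2$. The substance is the reverse inclusion. Given a nonzero $S$-invariant subspace $M$, I would restrict $S$ to $M$ and exploit the fact that $S|_M$ is an isometry. Setting the \emph{wandering subspace} $W := M \ominus zM$, a Wold-type decomposition gives
\begin{equation*}
 M = \bigoplus_{n=0}^{\infty} z^n W,
\end{equation*}
where the ``unitary part'' $\bigcap_{n\geq 0} z^n M$ vanishes because it sits inside $\bigcap_{n\geq 0} z^n H^2 = \{0\}$ (a function there would have every Taylor coefficient equal to zero).

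The crux of the argument, and the step I expect to be the main obstacle, is to show $\dim W \leq 1$. Take $f, g \in W$; since $z^k f$ and $z^k g$ lie in $zM$ for every $k \geq 1$ while $f, g \perp zM$, computing Fourier coefficients of the boundary product $f\overline{g}$ shows that $\widehat{f\overline g}(k) = 0$ for all $k \neq 0$, so that $f\overline g$ is constant almost everywhere on $\T$. Taking $g = f$ shows $|f|^2$ is constant a.e., and likewise $|g|^2$; if $f \not\equiv 0$ it then follows that $g$ is a scalar multiple of $f$ a.e. Hence $W$ is at most one-dimensional, and it is nonzero precisely because $M \neq \{0\}$. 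Choosing a unit vector $u$ spanning $W$, the relation that $|u|^2$ is constant together with $\|u\| = 1$ forces $|u| = 1$ a.e.\ on $\T$, so $u$ is inner. Feeding this back into the decomposition and using that multiplication by $u$ is an isometry, I would conclude
\begin{equation*}
 M = \bigvee\{ z^n u : n \geq 0\} = u\, \bigvee\{ z^n : n \geq 0\} = u H^2.
\end{equation*}

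For the cyclic vectors I would run everything through the structure theorem just proved, together with the canonical factorization $f = BSF$. The smallest invariant subspace $[f] := \bigvee\{z^n f : n \geq 0\}$ containing a nonzero $f$ is, by the first part, of the form $uH^2$ for some inner $u$; since $f \in uH^2$, the inner function $u$ divides the inner part $BS$ of $f$. If $f$ is outer then $BS$ is a unimodular constant, so $u$ is constant and $[f] = H^2$, that is, $f$ is cyclic. Conversely, if $f$ is cyclic then $[f] = H^2$; but $f = (BS)F$ lies in the closed invariant subspace $(BS)H^2$, which must then equal $H^2$, and this is impossible unless $BS$ is constant (otherwise $1 \notin (BS)H^2$). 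Hence $f$ is outer. The only genuine external input here is the (essential) uniqueness of the inner--outer factorization, which is already available from the canonical factorization theorem.
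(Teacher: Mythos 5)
The paper itself does not prove Beurling's Theorem; it states the result and defers to Beurling's original article \cite{B} and the standard texts, so there is no in-paper argument to compare yours against. On its own merits, your proof is correct and is the classical wandering-subspace (Wold decomposition) argument, essentially Halmos's streamlining of Beurling's original proof: the decomposition $M = \bigoplus_{n\geq 0} z^n W$ with $W = M \ominus zM$ is legitimate because $\bigcap_{n} z^nM \subseteq \bigcap_n z^nH^2 = \{0\}$, and your Fourier-coefficient computation showing $f\overline{g}$ is constant a.e.\ for $f,g \in W$ correctly yields $\dim W \leq 1$ and the unimodularity of the generator. Two small points you gloss over but which are easy to supply: (a) the claim $1/u \notin H^2$ for nonconstant inner $u$ deserves a word --- either invoke the Smirnov maximum principle (an $N^+$ function with bounded boundary values is in $H^\infty$, so $1/u$ would be inner and $u$ would be a unimodular constant), or more simply note that the projection of $1$ onto $uH^2$ is $\overline{u(0)}\,u$, which has norm $|u(0)| < 1$; and (b) the step ``$f \in uH^2$ implies $u$ divides the inner factor of $f$'' uses the uniqueness of the inner--outer factorization applied to $f = u\,g$, which you do acknowledge as your external input. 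The deduction of the cyclic-vector characterization from the subspace classification is exactly the standard one and is sound.
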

		
	From an operator theoretic perspective, Beurling's Theorem is notable for providing an explicit description of the lattice  
	of invariant subspaces for the unilateral shift operator.  Indeed, 
	\begin{equation}\label{eq:u1h2u2}
		u_1H^2 \subseteq u_2 H^2 \iff u_1/u_2 \in H^{\infty}.
	\end{equation}
	For our purposes, however, it is the invariant subspaces  for the backward shift
	that are of greatest importance.  These are the so-called \emph{model spaces} that are the primary focus of our investigations.

\section{Model spaces}
	The details of much of the following material can be found in the original sources, which we attempt to quote whenever possible, 
	as well as the texts \cite{CR, CMR, N1, N2,  RR, RS}.  On the other hand, many of the results discussed below are part of the
	folklore on the subject and occasionally proper references cannot be readily identified.

\subsection{Basic properties}\label{SubsectionBasic}
	We are now ready to introduce our primary object of study.

	\begin{Definition}
		If $u$ is an inner function, then the corresponding \emph{model space} is
		\begin{equation}\label{eq:Ku2}
			\K_u := (uH^2)^{\perp} = H^2 \ominus u H^2.
		\end{equation}
	\end{Definition}

	The definition above is somewhat unenlightening since the function theoretic properties of the model space $\K_u$ 
	do not immediately present themselves.  A more direct description of $\K_{u}$ via the boundary values of these functions comes from the following result. 

	\begin{Proposition}\label{Prop:fgzu}
		For inner $u$, the model space $\K_{u}$ is the set of functions $f$ in $H^2$ such that 
		$f = \overline{gz}u$ almost everywhere on $\T$ for some $g$ in $H^2$.  In other words,
		\begin{equation*}
			\K_{u} = H^2 \cap u \overline{z H^2},
		\end{equation*}
		where the right hand side is regarded as a set of functions on $\T$.
	\end{Proposition}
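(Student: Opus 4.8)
The plan is to unwind the definition $\K_u = H^2 \ominus uH^2$ directly and translate the orthogonality condition into a statement about boundary functions, using the fundamental decomposition $L^2 = H^2 \oplus \overline{zH^2}$. Since $uH^2 \subseteq H^2 \subseteq L^2$ and the $H^2$ and $L^2$ inner products agree on $H^2$, a function $f \in H^2$ lies in $\K_u$ if and only if $\langle f, uh\rangle = 0$ for every $h \in H^2$, where the inner product may be computed in $L^2$.

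The key observation is that, because $u$ is inner, its boundary function satisfies $|u(\zeta)| = 1$ for $m$-almost every $\zeta \in \T$, so $u\overline{u} = 1$ a.e.\ and $|f\overline{u}| = |f|$ a.e. In particular $f\overline{u}$ belongs to $L^2$ whenever $f$ does. First I would rewrite, for $f \in H^2$ and $h \in H^2$,
\begin{equation*}
	\langle f, uh\rangle = \int_{\T} f \,\overline{u}\,\overline{h}\, dm = \langle f\overline{u}, h\rangle,
\end{equation*}
so that $f \perp uH^2$ in $L^2$ is equivalent to $f\overline{u} \perp H^2$ in $L^2$. Next I would invoke the orthogonal decomposition $L^2 = H^2 \oplus \overline{zH^2}$ recorded earlier: a function in $L^2$ is orthogonal to $H^2$ precisely when it lies in $\overline{zH^2}$. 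Hence $f \in \K_u$ if and only if $f\overline{u} = \overline{zg}$ for some $g \in H^2$.

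Finally I would multiply this identity by $u$ and use $u\overline{u} = 1$ a.e.\ to obtain $f = u\,\overline{zg} = \overline{gz}\,u$ almost everywhere on $\T$, which is exactly the asserted pointwise description; conversely any such $f$ visibly satisfies $f\overline{u} = \overline{zg} \in \overline{zH^2}$ and so is orthogonal to $uH^2$. Reading the equivalence as a set identity gives $\K_u = H^2 \cap u\,\overline{zH^2}$, completing the argument.

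The proof is essentially a short chain of equivalences, so there is no deep obstacle; the only point requiring care is the bookkeeping between the $H^2$ inner product and the $L^2$ inner product. One must be explicit that $f\overline{u}$ genuinely lands in $L^2$ (which relies on $|u| = 1$ a.e.) and that multiplying a boundary identity by $u$ is legitimate a.e.\ on $\T$, where $u$ is unimodular, rather than as an identity of analytic functions on $\D$. Keeping the whole computation at the level of boundary functions in $L^2$, as the statement itself suggests, is what makes these steps clean.
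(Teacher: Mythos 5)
Your proposal is correct and follows essentially the same route as the paper: both arguments move the inner function across the $L^2$ inner product to get $\langle f, uh\rangle = \langle \overline{u}f, h\rangle$, identify $(H^2)^\perp$ in $L^2$ with $\overline{zH^2}$, and then multiply by $u$ using $u\overline{u}=1$ a.e.\ on $\T$. The extra care you take in noting that $f\overline{u}$ stays in $L^2$ and that the final identity is one of boundary functions is a welcome, if minor, amplification of what the paper leaves implicit.
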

	
	\begin{proof} 
		For each $f$ in $H^2$, we see that
		\begin{equation*}
			\inner{f,uh} = 0,\, \forall h \in H^2 \quad\iff\quad \inner{\overline{u}f,h} = 0,\, 
			\forall h \in H^2
			\quad\iff\quad\overline{u}f \in \overline{zH^2}.
		\end{equation*}
		Since $u\overline{u}=1$ almost everywhere on $\T$, we see that an $H^2$ function $f$ belongs to the orthocomplement of $uH^2$ if and only if
		$f$ belongs to $u \overline{zH^2}$ (i.e., $f = \overline{gz}u$ for some $g$ in $H^2$).\footnote{In fact, 
		$g$ actually belongs to $\K_u$ as well (see Section \ref{SectionConjugation}).}
	\end{proof}
	
	Just as the Beurling-type subspaces $uH^2$ constitute the nontrivial invariant subspaces for
	the unilateral shift on $H^2$, the subspaces $\K_u$ play an analogous role for
	the backward shift.

	\begin{Corollary}\label{cyclic-vec-S-star}
		The model spaces $\K_u$, where $u$ is inner, are precisely the proper invariant subspaces of $H^2$ for the backward shift
		\begin{equation}\label{eq:BSD}
			f(z) \mapsto \frac{f(z) - f(0)}{z} ,\qquad\qquad
			(a_0,a_1,a_2,\ldots) \mapsto (a_1,a_2,a_3,\ldots).
		\end{equation}
	\end{Corollary}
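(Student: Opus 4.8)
The plan is to exploit the fundamental duality between the invariant subspaces of an operator and those of its adjoint, and then to invoke Beurling's Theorem, which we have already recorded. Recall that $S^*$ is the adjoint of the unilateral shift $S$, so the two operators' invariant-subspace lattices should be related by orthogonal complementation. First I would establish the elementary lemma that for any bounded operator $T$ on a Hilbert space and any closed subspace $M$, one has $T M \subseteq M$ if and only if $T^*(M^{\perp}) \subseteq M^{\perp}$. This is immediate from the adjoint relation $\inner{Tm,n} = \inner{m,T^*n}$: if $M$ is $T$-invariant and $n \in M^{\perp}$, then $\inner{m, T^* n} = \inner{Tm,n} = 0$ for every $m \in M$, so $T^* n \in M^{\perp}$; the converse follows by applying the same reasoning to $T^*$ in place of $T$ and using $T^{**} = T$ together with $(M^{\perp})^{\perp} = M$. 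Since $M \mapsto M^{\perp}$ is an involution on the closed subspaces of $H^2$, this sets up a bijection between the $S$-invariant subspaces and the $S^*$-invariant subspaces.

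Next I would apply this bijection with $T = S$. By Beurling's Theorem, the $S$-invariant subspaces of $H^2$ are precisely $\{0\}$ together with the spaces $uH^2$ as $u$ ranges over all inner functions, a unimodular constant $u$ giving $uH^2 = H^2$. Taking orthogonal complements and recalling the definition $\K_u = (uH^2)^{\perp}$, I would read off that the $S^*$-invariant subspaces are exactly $(\{0\})^{\perp} = H^2$ together with the spaces $\K_u$, where $u$ runs over all inner functions.

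The final step is the bookkeeping of the trivial cases. Since the lone complement $H^2$ is the whole space, the proper $S^*$-invariant subspaces are precisely the $\K_u$; and because a constant inner function yields $\K_u = \{0\}$, the collection $\{\K_u : u \text{ inner}\}$ already absorbs the zero subspace, so no further space must be adjoined. I do not anticipate a genuine obstacle here: the operator-theoretic content is the one-line adjoint lemma, and all the substantive work has been offloaded onto Beurling's Theorem via Proposition~\ref{Prop:fgzu} and the definition of $\K_u$. The only place demanding care is matching the words \emph{proper} and \emph{nontrivial} to the correct edge cases, namely verifying that excluding $H^2$ (but not $\{0\}$) on the $S^*$ side corresponds exactly to excluding $\{0\}$ (but not $H^2$) on the $S$ side under complementation.
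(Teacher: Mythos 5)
Your argument is correct and is precisely the route the paper intends: the paper states this as an immediate consequence of Beurling's Theorem via the standard duality $TM \subseteq M \iff T^*M^{\perp} \subseteq M^{\perp}$, which is exactly your one-line adjoint lemma. Your careful matching of the edge cases (constant inner $u$ giving $uH^2 = H^2$ and $\K_u = \{0\}$, so that ``proper'' on the $S^*$ side corresponds to ``nontrivial or zero'' on the $S$ side) is the only delicate point, and you have handled it correctly.
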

	
	The following corollary is an immediate consequence of \eqref{eq:u1h2u2}
	(recall what it means for one inner function to {\em divide} another from Definition \ref{divides-def}). 

	\begin{Corollary}
		If $u_1,u_2$ are inner functions, then
		$$u_1 | u_2 \iff \K_{u_1} \subseteq \K_{u_2}.$$
	\end{Corollary}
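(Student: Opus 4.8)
The plan is to deduce the equivalence directly from the lattice description \eqref{eq:u1h2u2} of the invariant subspaces $uH^2$, using nothing more than the fact that orthogonal complementation reverses inclusions. The whole argument is a chain of equivalences, and there is no analytic content beyond what has already been established.

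First I would record that, by the very definition $\K_{u_i} = H^2 \ominus u_i H^2$ in \eqref{eq:Ku2}, the orthogonal complement of $\K_{u_i}$ taken within the Hilbert space $H^2$ is exactly $u_i H^2$. Next I would invoke the elementary Hilbert-space fact that for closed subspaces $M$ and $N$ one has $M \subseteq N$ if and only if $N^{\perp} \subseteq M^{\perp}$, the complements being taken in the ambient space $H^2$. Applying this with $M = \K_{u_1}$ and $N = \K_{u_2}$ yields
\[
\K_{u_1} \subseteq \K_{u_2} \iff u_2 H^2 \subseteq u_1 H^2 .
\]
Finally I would feed the right-hand containment into \eqref{eq:u1h2u2}, with the indices in the order that produces $u_2 H^2 \subseteq u_1 H^2 \iff u_2/u_1 \in H^{\infty}$; by part (i) of Definition \ref{divides-def} the condition $u_2/u_1 \in H^{\infty}$ is precisely $u_1 \mid u_2$. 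Stringing the three equivalences together gives $\K_{u_1} \subseteq \K_{u_2} \iff u_1 \mid u_2$, as desired.

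The only point demanding care --- and the one place where a slip could occur --- is the bookkeeping of indices: passing to complements interchanges the roles of $u_1$ and $u_2$, so one must apply \eqref{eq:u1h2u2} in the correct order to ensure that the divisibility condition emerges as $u_1 \mid u_2$ (that is, $u_2/u_1 \in H^{\infty}$) rather than its reverse. Since \eqref{eq:u1h2u2} and the reversal of inclusions under complementation are both already in hand, this is the entire proof.
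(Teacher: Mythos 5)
Your argument is correct and is exactly the route the paper intends: it declares the corollary ``an immediate consequence of \eqref{eq:u1h2u2},'' and your chain of equivalences --- $\K_{u_i} = (u_iH^2)^{\perp}$, reversal of inclusions under orthogonal complementation, then \eqref{eq:u1h2u2} with the indices in the order giving $u_2/u_1 \in H^{\infty}$ --- is precisely the omitted verification, with the index bookkeeping handled correctly.
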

	
	\begin{Remark}
		We pause here to mention the mildly surprising fact, not widely known, that if $u$ is not a finite Blaschke product, then $\K_u$ 
		contains a linearly ordered chain of $S^*$-invariant subspaces of uncountable length.  This is obvious if $u$ has 
		a singular inner factor $S_{\mu}$ since $\K_{S_{\alpha \mu}} \subseteq \K_u$ for $0 \leqslant \alpha \leqslant 1$.  On the other hand,
		if $u$ is an infinite Blaschke product, then things are not so clear.
		Suppose that $u = \prod_{n=1}^{\infty} b_n$ where each $b_n$ is a Blaschke factor.  Let $I \subseteq \R$ be a nonempty interval and
		let $\tau: \mathbb{Q} \cap I \rightarrow \mathbb{N}$ be a bijection.  For each $\alpha$ in $I$, define
		$I_{\alpha} = \{ q \in \mathbb{Q} \cap I : q < \alpha \}$ and notice that each $\alpha$ corresponds to a distinct Blaschke product
		$u_{\alpha} = \prod_{n \in \tau (I_{\alpha}) }  b_n $ that divides $u$.  This yields the desired chain of $S^*$-invariant subspaces.
	\end{Remark}

	Building upon the proof of Proposition \ref{Prop:fgzu}, we have the following important result
	\cite[Thm.~3.1.5]{DSS}. 
	
	\begin{Proposition}\label{Prop:Generator}
		A function $f$ in $H^2$ is noncyclic for $S^*$ if and only if there exists a function $g$ in $H^2$
		and an inner function $u$ such that 
		\begin{equation}\label{eq:fugu}
				f = \overline{gz}u
		\end{equation}	
		almost everywhere on $\T$.
		If $u$ and the inner factor of $g$ are relatively prime, then 
		the $S^*$-invariant subspace of $H^2$ generated by $f$ is $\K_u$ itself:
		\begin{equation*}
			\bigvee\{f, S^*f, S^{*2}f,\ldots\} = \K_u.
		\end{equation*}
	\end{Proposition}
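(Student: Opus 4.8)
The plan is to combine the two immediately preceding results: the identification of the proper $S^*$-invariant subspaces with the model spaces (Corollary \ref{cyclic-vec-S-star}) and the boundary description $\K_u = H^2 \cap u\overline{zH^2}$ of Proposition \ref{Prop:fgzu}. The first of these handles the equivalence almost for free, while the second, together with the relative-primeness hypothesis, is what pins down the generated subspace exactly.

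For the equivalence, I would write $M_f := \bigvee\{f, S^*f, S^{*2}f, \ldots\}$ for the smallest closed $S^*$-invariant subspace containing $f$, so that $f$ is noncyclic exactly when $M_f \neq H^2$. If $f$ is noncyclic, then $M_f$ is a proper $S^*$-invariant subspace, so Corollary \ref{cyclic-vec-S-star} yields an inner $u$ with $M_f = \K_u$; as $f \in \K_u$, Proposition \ref{Prop:fgzu} then furnishes $g \in H^2$ with $f = \overline{gz}u$ a.e.\ on $\T$. Conversely, given such a representation $f = \overline{gz}u$, Proposition \ref{Prop:fgzu} places $f$ in the proper subspace $\K_u$, so $M_f \subseteq \K_u \neq H^2$ and $f$ is noncyclic. (The degenerate case of unimodular constant $u$ forces $f = 0$, which is trivially noncyclic.)

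For the identification $M_f = \K_u$, I would start from $M_f \subseteq \K_u$, which holds because $\K_u$ is $S^*$-invariant and contains $f$. Since $M_f$ is a proper $S^*$-invariant subspace, Corollary \ref{cyclic-vec-S-star} gives an inner $v$ with $M_f = \K_v$, and $\K_v \subseteq \K_u$ forces $v \mid u$; write $u = vw$ with $w$ inner. The whole problem then reduces to showing that $w$ is constant, since that gives $\K_v = \K_u$. To extract information about $w$, I would apply Proposition \ref{Prop:fgzu} to $f \in \K_v$ to get $h \in H^2$ with $f = \overline{hz}v$ a.e., equate this with $f = \overline{gz}u = \overline{gz}vw$, and cancel the a.e.-unimodular factor $\overline{z}v$ to obtain $\overline{h} = \overline{g}w$, i.e.\ $h = g\overline{w} = g/w$ on $\T$. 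Because $h \in H^2$, this should upgrade to $g/w \in H^2$ as analytic functions, which is precisely the statement that $w$ divides the inner factor of $g$.

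The decisive step — and the only place the hypothesis enters — is the conclusion: $w$ divides $u$ (from $u = vw$) and $w$ divides the inner factor of $g$, so $w$ is a common inner divisor of $u$ and the inner factor of $g$; relative primeness then forces $w$ to be a unimodular constant, whence $\K_v = \K_u$ and $M_f = \K_u$. I expect the main obstacle to be the bookkeeping in the previous paragraph: carefully justifying that the a.e.\ boundary identity $h = g/w$ really does give an $H^2$ function $g/w$ (uniqueness of $H^2$ functions through their boundary values), and translating $g/w \in H^2$ into the inner-divisibility statement $w \mid (\text{inner factor of } g)$ via the canonical factorization of $g$. Everything else is a direct appeal to the two quoted results.
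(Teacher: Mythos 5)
Your argument is correct and complete. For the record, the paper does not actually supply a proof of this proposition --- it only remarks that the result is obtained by ``building upon the proof of Proposition \ref{Prop:fgzu}'' and cites Douglas--Shapiro--Shields --- so there is nothing to compare against line by line; but your route is exactly the one the paper is gesturing at. The equivalence is, as you say, immediate from Corollary \ref{cyclic-vec-S-star} together with the boundary description $\K_u = H^2 \cap u\overline{zH^2}$, and your handling of the second part is right: $M_f = \K_v$ with $v \mid u$, write $u = vw$, cancel the a.e.-unimodular factor $\overline{z}v$ to get $h = g\overline{w}$ on $\T$, and conclude $w \mid g$. The one step you flag as needing care is indeed the only delicate point, and the cleanest way to dispatch it is to avoid dividing by $w$ altogether: from $h = g\overline{w}$ a.e.\ you get $hw = g$ a.e.\ on $\T$, and since both $hw$ and $g$ lie in $H^2$, they coincide as analytic functions on $\D$; thus $g/w = h \in H^2$, which by definition is $w \mid g$, and hence $w$ divides the inner factor of $g$ because the outer factor of $g$ has no inner divisors other than constants. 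Relative primeness then forces $w$ to be a unimodular constant, so $\K_v = \K_{vw} = \K_u$ as you conclude. No gaps.
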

	
\begin{Remark}
We will see another description of $\K_u$ as well as the noncyclic vectors for $S^{*}$ when we discuss pseudocontinuations in Section \ref{Section:BB}. 
\end{Remark}

	Being invariant under the backward shift operator, it is not surprising that the spaces
	$\K_u$ are also invariant under certain functions of the backward shift.  
	In what follows, we let $P:L^2 \to H^2$ denote the \emph{Riesz projection}
	\begin{equation}\label{eq:RieszProjection}
		P(\ldots,a_{-1},a_0,a_1,a_2,\ldots) = (a_0,a_1,a_2,\ldots),
	\end{equation}
	the orthogonal projection which returns the ``analytic part'' of a Fourier series in $L^2$.
	For instance, $P(1+ 2 \cos \theta) = P(  e^{-i\theta} + 1 + e^{i\theta}) = 1+ e^{i\theta} = 1+\zeta$.
	We also remark that, as an orthogonal projection, the operator $P$ is self-adjoint and hence
	satisfies $\inner{Pf,g} = \inner{f,Pg}$ for all $f,g$ in $L^2$.

	\begin{Definition}\label{DefinitionToeplitz}
		For $\phi$ in $L^{\infty}(\T)$ the \emph{Toeplitz operator} $T_{\phi}:H^2\to H^2$ with \emph{symbol} $\phi$
		is defined by
		\begin{equation*}
			T_{\phi}(f) = P(\phi f).
		\end{equation*}
	\end{Definition}	
	
	The study of Toeplitz operators is a vast subject and we refer the reader to the texts \cite{MR0361893, MR2270722} 
	for the basics and to \cite{MR2223704} for an encyclopedia on the subject. 
	Note that when $\phi$ belongs to $H^{\infty}$, the Toeplitz operator $T_{\phi} f = \phi f$ is just a multiplication operator. 
	A simple calculation shows that $T_z$ and $T_{\overline{z}}$ are precisely the forward 
	and backward shift operators on $H^2$.  The operator $T_z:H^2\to H^2$ enjoys an 
	$H^{\infty}$-functional calculus
	given by $\phi(T_z) = T_{\phi}$ for $\phi$ in $H^{\infty}$ \cite{MR2760647}.  Being
	$T_{\overline{z}}$-invariant already, it should come as little surprise that the spaces $\K_u$ are also invariant under conjugate-analytic
	Toeplitz operators (i.e., Toeplitz operators of the form $T_{\overline{\phi}}$ where $\phi$ belongs to $H^{\infty}$). 

	\begin{Proposition}\label{Prop:CATO}
		If $\phi \in H^{\infty}$ and $u$ is inner then $T_{\overline{\phi}}\K_u \subseteq \K_u$.
	\end{Proposition}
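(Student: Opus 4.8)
The plan is to verify the containment directly from the definition $\K_u = (uH^2)^{\perp}$ by testing $T_{\overline{\phi}} f$ against an arbitrary element $uh$ of $uH^2$. Since the Riesz projection $P$ maps into $H^2$, the function $T_{\overline{\phi}} f = P(\overline{\phi} f)$ already lies in $H^2$ for every $f \in \K_u$; the only thing left to check is that it is orthogonal to $uH^2$.

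First I would fix $f \in \K_u$ and $h \in H^2$ and compute $\inner{T_{\overline{\phi}} f, uh}$. The crucial tool is the self-adjointness of $P$ recorded just before Definition \ref{DefinitionToeplitz}. Because $uh$ already belongs to $H^2$, we have $P(uh) = uh$, so
\begin{equation*}
\inner{T_{\overline{\phi}} f, uh} = \inner{P(\overline{\phi} f), uh} = \inner{\overline{\phi} f, P(uh)} = \inner{\overline{\phi} f, uh}.
\end{equation*}
This is the step that makes the conjugate-analytic symbol harmless: although $\overline{\phi} f$ need not lie in $H^2$, pairing it against an element of $H^2$ causes the projection to act as the identity and disappear.

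Next I would move the factor $\overline{\phi}$ across the inner product. Writing the pairing as an integral over $\T$ and using $\overline{\overline{\phi}} = \phi$, one gets $\inner{\overline{\phi} f, uh} = \inner{f, \phi uh} = \inner{f, u(\phi h)}$. Since $\phi \in H^{\infty}$ is a multiplier of $H^2$, the product $\phi h$ lies in $H^2$, and hence $u(\phi h) \in uH^2$. As $f \in \K_u = (uH^2)^{\perp}$, this forces $\inner{f, u(\phi h)} = 0$. Because $h \in H^2$ was arbitrary, $T_{\overline{\phi}} f \perp uH^2$, so $T_{\overline{\phi}} f \in \K_u$.

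There is no serious obstacle here: the whole argument is a short duality computation, and the only point that requires care is the legitimacy of sliding $\overline{\phi}$ and the projection $P$ past one another. This is handled cleanly by the self-adjointness of $P$ together with the observation that one only ever pairs against functions that already belong to $H^2$, so the projection is transparent on that side.
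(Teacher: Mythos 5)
Your proof is correct and follows exactly the same route as the paper's: test $T_{\overline{\phi}}f$ against $uh$, use the self-adjointness of $P$ together with $P(uh)=uh$ to remove the projection, and then move $\overline{\phi}$ across the pairing to land in $uH^2$, which $f$ annihilates. The only difference is that you spell out the intermediate justifications (e.g., that $T_{\overline{\phi}}f \in H^2$ automatically) which the paper leaves implicit.
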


	\begin{proof}
		For each $f$ in $\K_u$ we have
		\begin{equation*}
			\inner{ T_{\overline{\phi}}f, uh} 
			 = \inner{ P( \overline{\phi} f),uh} 
			 = \inner{ \overline{\phi} f , P(uh)} 
			 = \inner{ \overline{\phi}f,uh}
			 = \inner{f, u(\phi h)} 
			 = 0. \qedhere
		\end{equation*}
	\end{proof}

	Proposition \ref{Prop:CATO} shows that the spaces
	$\K_u$ enjoy the so-called \emph{$F$-property}.\footnote{In general, a set
		$\mathscr{C}$ of functions contained in $H^2$ has the \emph{$F$-property} 
		if whenever $\theta$ divides $f$, then $f/\theta \in \mathscr{C}$. 
		Good sources for this are \cite{MR505686, MR643387}.}

	\begin{Proposition}
		If $f$ belongs to $\K_u$ and $\theta$ is an inner function that divides $f$, i.e., $f/\theta$ belongs to $H^2$, 
		then $f/\theta$ also belongs to $\K_u$.  In particular, the outer factor of any function in $\K_u$ also
		belongs to $\K_u$.
	\end{Proposition}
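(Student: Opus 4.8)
The plan is to realize the division map $f \mapsto f/\theta$ as a conjugate-analytic Toeplitz operator and then simply quote Proposition \ref{Prop:CATO}. The crucial observation is that, since $\theta$ is inner, its boundary function satisfies $|\theta| = 1$ almost everywhere on $\T$, so that $1/\theta = \overline{\theta}$ as boundary functions and hence $f/\theta = \overline{\theta} f$ almost everywhere on $\T$.

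First I would take $\phi = \theta$, which belongs to $H^{\infty}$ because $\theta$ is inner, and examine $T_{\overline{\theta}} f = P(\overline{\theta} f)$. By hypothesis $f/\theta \in H^2$, and since $f/\theta = \overline{\theta} f$ on $\T$, the $L^2$ boundary function $\overline{\theta} f$ in fact lies in $H^2$, i.e.\ it has no negative Fourier coefficients. Consequently the Riesz projection acts on it as the identity, giving $P(\overline{\theta} f) = \overline{\theta} f = f/\theta$, so that $f/\theta = T_{\overline{\theta}} f$. Now I would apply Proposition \ref{Prop:CATO} with $\phi = \theta$: since $f \in \K_u$ we obtain $T_{\overline{\theta}} f \in \K_u$, and by the identification just made this element is exactly $f/\theta$. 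This proves the first assertion. (One could equivalently argue directly from the boundary-value description $\K_u = H^2 \cap u\overline{zH^2}$ of Proposition \ref{Prop:fgzu}, writing $\overline{u}f = \overline{zg}$ with $g \in H^2$ and noting $\overline{u}(f/\theta) = \overline{z\,g\theta}$ with $g\theta \in H^2$.)

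For the final clause, I would invoke the canonical factorization $f = \theta F$, where $\theta$ denotes the inner factor of $f$ (the product of its Blaschke and singular inner factors) and $F$ is the outer factor, which itself lies in $H^2$. Then the hypothesis $f/\theta = F \in H^2$ holds automatically, so the first part yields $F \in \K_u$, as claimed.

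The main subtlety to handle carefully is the identity $P(\overline{\theta} f) = f/\theta$: one must distinguish the almost-everywhere-defined boundary product $\overline{\theta} f$, a priori only an $L^2$ function, from the analytic quotient $f/\theta$, and recognize that the hypothesis $f/\theta \in H^2$ is precisely what forces $\overline{\theta} f$ to have vanishing negative Fourier coefficients, so that the Riesz projection leaves it fixed. Once this point is settled, the conclusion is an immediate consequence of the already-established invariance of $\K_u$ under conjugate-analytic Toeplitz operators.
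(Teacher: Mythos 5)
Your proposal is correct and is essentially identical to the paper's own argument: both identify $f/\theta$ with $T_{\overline{\theta}}f = P(\overline{\theta}f)$, using the hypothesis $f/\theta \in H^2$ to see that the Riesz projection acts as the identity, and then invoke the invariance of $\K_u$ under conjugate-analytic Toeplitz operators (Proposition \ref{Prop:CATO}). The extra care you take in distinguishing the boundary product $\overline{\theta}f$ from the analytic quotient is a nice touch but does not change the substance.
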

	
	\begin{proof}
		Simply observe that $T_{\overline{\theta}}f = P(\overline{\theta}f) = P(f/\theta) = f/\theta$ since $f/\theta$
		is in $H^2$.  By the preceding proposition, $f/\theta$ belongs to $\K_u$.
	\end{proof}

\subsection{Finite dimensional model spaces}
	The simplest examples of model spaces
	are those corresponding to finite Blaschke products
	\begin{equation*}
		u(z) = \prod_{j = 1}^{n} \frac{z - \lambda_j}{1 - \overline{\lambda_j} z}.  \tag{$\lambda_j \in \D$}
	\end{equation*}
	Indeed, these are the only model spaces that are finite-dimensional and whose elements can be
	completely characterized in an explicit fashion.
	To do this, one should first notice that the Cauchy-Szeg\H{o} kernel $c_{\lambda}$ (from \eqref{szego-k})
	belongs to $\K_u$ whenever $\lambda$ is a zero of $u$.  Indeed, if $u(\lambda) = 0$,
	then clearly $\inner{uh,c_{\lambda}} = u(\lambda)h(\lambda) = 0$ for all $h$ in $H^2$.

	\begin{Proposition}\label{Prop:FDMS}
		If $u$ is a finite Blaschke product with zeros $\lambda_1,\lambda_2,\ldots,\lambda_n$, repeated
		according to their multiplicity, then
		\begin{equation}\label{eq:FDMSP}
			\K_u
			= \left\{\, \frac{a_0 + a_1 z + \cdots + a_{n-1} z^{n-1}}{(1-\overline{\lambda_1}z)(1-\overline{\lambda_2}z)\cdots (1-\overline{\lambda_n}z)} :
			a_0,a_1,\ldots,a_{n-1} \in \C\right\}.
		\end{equation}
		In particular, $\K_{z^n}$ is the space of all polynomials of degree $\leqslant n - 1$.
	\end{Proposition}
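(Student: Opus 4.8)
The plan is to establish the two inclusions in \eqref{eq:FDMSP} separately, in each case reducing to the boundary description of $\K_u$ recorded in Proposition~\ref{Prop:fgzu}: for $f \in H^2$ one has $f \in \K_u$ if and only if $\overline{u} f \in \overline{zH^2}$ on $\T$. Write $q(z) = \prod_{j=1}^n (1 - \overline{\lambda_j} z)$ for the common denominator in \eqref{eq:FDMSP} and $r(z) = \prod_{j=1}^n (z - \lambda_j)$, so that $u = r/q$ and, since $|u| = 1$ and $\overline{\zeta} = \zeta^{-1}$ on $\T$, the boundary identity $\overline{u} = q/r$ holds $m$-a.e. Note at the outset that $q$ has no zeros in $\overline{\D}$ (as $|\overline{\lambda_j} z| \leqslant |\lambda_j| < 1$), so every $f = p/q$ with $p$ a polynomial is analytic on a neighborhood of $\overline{\D}$ and hence lies in $H^2$; thus membership in $H^2$ is free.

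For the inclusion of the right-hand side of \eqref{eq:FDMSP} into $\K_u$, I would take $f = p/q$ with $\deg p \leqslant n-1$ and compute $\overline{u} f = (q/r)(p/q) = p/r$ on $\T$. Because $\deg p < \deg r = n$ and every zero of $r$ lies in $\D$, the rational function $p/r$ is holomorphic on $\{|\zeta| > \max_j |\lambda_j|\}$, a region containing $\T$, and vanishes at $\infty$; its Laurent expansion there involves only strictly negative powers of $\zeta$, so $p/r \in \overline{zH^2}$. Proposition~\ref{Prop:fgzu} then gives $f \in \K_u$.

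For the reverse inclusion, let $f \in \K_u$ and set $g = \overline{u} f \in \overline{zH^2}$, so that $g$ has Fourier coefficients supported in $\{-1,-2,\ldots\}$. Multiplying $\overline{u} = q/r$ through by $rf$ yields the boundary identity $q f = r g$. The left side $q f$ is a polynomial times an $H^2$ function, hence lies in $H^2$ and has Fourier support in $\{0,1,2,\ldots\}$; the right side $r g$ is the product of a polynomial of Fourier support $\{0,1,\ldots,n\}$ with a function of support in $\{-1,-2,\ldots\}$, hence has Fourier support in $\{\ldots,n-2,n-1\}$. Therefore $q f = r g$ has Fourier support in $\{0,1,\ldots,n-1\}$, i.e.\ it equals a polynomial $p$ with $\deg p \leqslant n-1$, and $f = p/q$ is exactly of the form appearing in \eqref{eq:FDMSP}. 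The special case $u = z^n$ is immediate: then $\lambda_1 = \cdots = \lambda_n = 0$, so $q \equiv 1$ and the formula collapses to the polynomials of degree $\leqslant n-1$.

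The routine-but-delicate heart of the argument is the Fourier-support bookkeeping in the reverse inclusion, together with the Laurent-expansion claim that $p/r \in \overline{zH^2}$ in the forward inclusion; both become transparent once one uses that $q$ is zero-free on $\overline{\D}$ and that $\deg p < \deg r$, and neither step uses distinctness of the $\lambda_j$, so repeated zeros are handled automatically. An alternative route would observe directly that the Cauchy--Szeg\H{o} kernels $c_{\lambda_j}$ and their derivatives $c_{\lambda_j}^{(k)}$ lie in $\K_u$ (as indicated just before the statement) and span the right-hand side via partial fractions, and then match dimensions using the orthogonal decomposition $\K_{u_1 u_2} = \K_{u_1} \oplus u_1 \K_{u_2}$ to conclude $\dim \K_u = n$; the boundary approach above is preferable since it avoids both the separate dimension count and the mild nuisance of a zero at the origin.
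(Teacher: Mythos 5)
Your proof is correct, but it takes a genuinely different route from the paper. The paper argues via reproducing kernels: it shows $\operatorname{span}\{c_{\lambda_1},\ldots,c_{\lambda_n}\}\subseteq\K_u$ from $\inner{uh,c_{\lambda_i}}=u(\lambda_i)h(\lambda_i)=0$, obtains the reverse containment from the fact that a function vanishing at every $\lambda_i$ is divisible by $u$, and then converts the span of Cauchy kernels into the rational form \eqref{eq:FDMSP} by partial fractions --- and it explicitly restricts to the case of distinct zeros, deferring the repeated-zero case to a remark about inserting derivatives $c_{\lambda}^{(k)}$. You instead work entirely on the boundary via Proposition~\ref{Prop:fgzu}, writing $u=r/q$ and reducing both inclusions to Fourier-support bookkeeping: $p/r\in\overline{zH^2}$ by the Laurent expansion at $\infty$ for one direction, and the identity $qf=rg$ forcing the common Fourier support into $\{0,\ldots,n-1\}$ for the other. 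What your approach buys is uniformity: nothing in the argument distinguishes simple from multiple zeros, so you prove the stated proposition in full generality in one pass, and you also sidestep the partial-fraction conversion. What the paper's approach buys is the identification $\K_u=\operatorname{span}\{c_{\lambda_i}\}$ itself, which it immediately reuses (for the spanning-set proposition that follows and for the TMW bases later); your argument yields \eqref{eq:FDMSP} directly but one would still need the small extra observation that the right-hand side is spanned by the kernels and their derivatives. All the individual steps you flag as delicate check out: $\overline{u}=q/r$ a.e.\ on $\T$ follows from $|u|=1$, the Laurent series of $p/r$ converges uniformly on $\T$ since $\max_j|\lambda_j|<1$, and $qf\in H^2$ with boundary Fourier support in $\{0,\ldots,n-1\}$ is indeed a polynomial of degree at most $n-1$ on all of $\D$.
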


	\begin{proof}
		We provide the proof in the special case where the zeros $\lambda_i$ are distinct.
		Since $\inner{u h,c_{\lambda_i}} = u(\lambda_i)h(\lambda_i)=0$ for all $h$ in $H^2$,  it follows that
		\begin{equation*}
			\operatorname{span} \{c_{\lambda_1}, c_{\lambda_2}, \ldots, c_{\lambda_n}\} \subseteq \K_{u}. 
		\end{equation*}
		If $f(\lambda_i) = \inner{f,c_{\lambda_i}} = 0$ for all $i$, then $u|f$ and hence $f$
		belongs to $u H^2$.  Thus
		\begin{equation*}
			\operatorname{span} \{c_{\lambda_1}, c_{\lambda_2}, \ldots, c_{\lambda_n}\}^{\perp} \subseteq \K_{u}^{\perp} . 
		\end{equation*}
		Since $\K_{u} = \operatorname{span} \{c_{\lambda_1}, c_{\lambda_2}, \ldots, c_{\lambda_n}\}$ 
		the result follows by simple algebra.  To be more specific, any linear combination of the Cauchy kernels
		$\{c_{\lambda_j}: 1 \leqslant j \leqslant n\}$ can be expressed as a rational function
		of the type prescribed in \eqref{eq:FDMSP}.  Conversely, any expression of the type encountered
		in \eqref{eq:FDMSP} can be decomposed, via partial fractions, into a linear combination
		of the functions $c_{\lambda_1}, c_{\lambda_2}, \ldots, c_{\lambda_n}$.
	\end{proof}
	
	If $\lambda$ has multiplicity $m$ as a zero of $u$, then one must also include the functions
	\begin{equation*}
		c_{\lambda}, c_{\lambda}', c_{\lambda}'',\ldots, c_{\lambda}^{(m-1)}
	\end{equation*}
	in place of  $c_{\lambda}$
	in the preceding proof.  Along similar lines, the proof of Proposition \ref{Prop:FDMS} and the preceding
	comment provides us with the following useful fact.
	
	\begin{Proposition}
		Suppose that $u$ is the finite Blaschke product with distinct zeros $\lambda_1,\lambda_2,\ldots,\lambda_n$
		with  respective multiplicities $m_1,m_2,\ldots,m_n$, then
		\begin{equation*}
			\K_u = \operatorname{span} \{ c_{\lambda_i}^{(\ell_i-1)} : 1 \leqslant i \leqslant n, \,1 \leqslant \ell_i \leqslant m_i \}.
		\end{equation*}
	\end{Proposition}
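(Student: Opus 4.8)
The plan is to mirror the proof of Proposition~\ref{Prop:FDMS}, replacing the Cauchy kernels $c_{\lambda_i}$ by the higher-order kernels $c_{\lambda_i}^{(\ell-1)}$ that reproduce the derivative functionals $f \mapsto f^{(\ell-1)}(\lambda_i)$. Write $N = m_1 + \cdots + m_n$ for the degree of $u$, so that $u = e^{i\gamma}\prod_{i=1}^n \left(\frac{z-\lambda_i}{1-\overline{\lambda_i}z}\right)^{m_i}$ for some unimodular constant, and set $\mathcal{S} = \operatorname{span}\{c_{\lambda_i}^{(\ell-1)} : 1 \leqslant i \leqslant n,\ 1\leqslant \ell \leqslant m_i\}$. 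I would prove $\K_u = \mathcal{S}$ by establishing the two inclusions separately.

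For $\mathcal{S} \subseteq \K_u$, I would fix $i$ and $\ell$ with $1 \leqslant \ell \leqslant m_i$ and test $c_{\lambda_i}^{(\ell-1)}$ against an arbitrary element $uh$ of $uH^2$. Using the reproducing identity $\inner{g, c_{\lambda_i}^{(\ell-1)}} = g^{(\ell-1)}(\lambda_i)$ from Subsection~\ref{Subsection:CSZK} with $g = uh$, this pairing equals $(uh)^{(\ell-1)}(\lambda_i)$. Since $u$ vanishes to order $m_i$ at $\lambda_i$, the product $uh$ vanishes there to order at least $m_i$, and as $\ell - 1 \leqslant m_i - 1$ every such derivative is $0$. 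Hence $c_{\lambda_i}^{(\ell-1)} \perp uH^2$, giving $c_{\lambda_i}^{(\ell-1)} \in \K_u$ and therefore $\mathcal{S} \subseteq \K_u$. For the reverse inclusion $\K_u \subseteq \mathcal{S}$, I would argue at the level of orthogonal complements, exactly as in Proposition~\ref{Prop:FDMS}. If $f \in H^2$ is orthogonal to every $c_{\lambda_i}^{(\ell-1)}$, then $f^{(\ell-1)}(\lambda_i) = 0$ for all $0 \leqslant \ell - 1 \leqslant m_i - 1$, i.e.\ $f$ vanishes to order at least $m_i$ at each $\lambda_i$. This forces $u \mid f$, so $f \in uH^2 = \K_u^{\perp}$; thus $\mathcal{S}^{\perp} \subseteq \K_u^{\perp}$, and taking orthogonal complements (with $\mathcal{S}$ finite-dimensional, hence closed) yields $\K_u \subseteq \mathcal{S}$.

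Two points deserve care. First, that the functions $c_{\lambda_i}^{(\ell-1)}$ are genuinely linearly independent --- so that they form a basis rather than merely a spanning set --- follows from the same idea as Proposition~\ref{Prop:CKLI}, with Lagrange interpolation upgraded to Hermite interpolation: a vanishing combination $\sum_{i,\ell} \alpha_{i,\ell} c_{\lambda_i}^{(\ell-1)} = 0$ makes $\sum_{i,\ell}\overline{\alpha_{i,\ell}}\, f^{(\ell-1)}(\lambda_i)$ vanish for all $f \in H^2$, and choosing a polynomial with prescribed Hermite data at the distinct points $\lambda_i$ forces every $\alpha_{i,\ell} = 0$. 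The main obstacle, and the one step I would spell out in full, is the divisibility claim that vanishing of $f$ to order $m_i$ at each $\lambda_i$ implies $f/u \in H^2$. This is where finiteness of the Blaschke product is essential: each factor $\frac{z-\lambda_i}{1-\overline{\lambda_i}z}$ is inner, and dividing $f$ by a Blaschke factor whose zero $f$ already carries leaves an $H^2$ function (the $F$-property, or directly: $f/u$ is analytic on $\D$ after cancelling removable singularities and satisfies $|f/u| = |f|$ a.e.\ on $\T$ since $|u| = 1$ there, so $\norm{f/u} = \norm{f}$). Granting this, the two inclusions combine to give $\K_u = \mathcal{S}$.
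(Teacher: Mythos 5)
Your proposal is correct and follows essentially the same route the paper indicates: the paper explicitly derives this proposition by rerunning the proof of Proposition \ref{Prop:FDMS} with the derivative kernels $c_{\lambda}, c_{\lambda}', \ldots, c_{\lambda}^{(m-1)}$ inserted in place of $c_{\lambda}$, which is exactly your two-inclusion argument via the reproducing identity $f^{(n)}(\lambda) = \inner{f, c_{\lambda}^{(n)}}$ and the divisibility step $u \mid f$. Your added remarks on Hermite interpolation for linear independence and on why $f/u \in H^2$ are sound elaborations of points the paper leaves implicit.
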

	
	In fact, the preceding observation makes it clear why $\dim \K_u < \infty$ occurs if and only if
	$u$ is a finite Blaschke product.  If $u$ has a factor that is an infinite Blaschke product, then Proposition \ref{Prop:CKLI}
	ensures that $\K_u$ contains an infinite, linearly independent set (namely the Cauchy kernels corresponding
	to the distinct zeros of $u$).  On the other hand, 
	if $u$ is a singular inner function, then $u^{1/n}$ is an inner function that divides $u$ whence
	$\K_{u^{1/n}} \subseteq \K_u$ for $n \geqslant 1$.   We may find an infinite orthonormal sequence in $\K_u$ by selecting unit vectors
	$f_n$ in $\K_{u^{1/n}} \ominus \K_{u^{1/(n+1)}}$, from which it follows that $\dim \K_u = \infty$.
	For the general case we point out the following decomposition that is interesting in its 
	own right.    
	
	\begin{Proposition} 
		If $\{u_j\}_{j \geqslant 1}$ is a possibly finite sequence of inner function such that 
		$u = \prod_{j \geqslant 1} u_j$ exists, then
		\begin{equation*}
			\K_u = \K_{u_1} \oplus \bigoplus_{n \geqslant 2} \Big(\prod_{j = 1}^{n - 1} u_j \Big) \K_{u_n}.
		\end{equation*}
		In particular, if $u$ and $v$ are inner functions, then 
		\begin{equation}\label{eq:Kuv}
			\K_{u v} = \K_u \oplus u \K_{v}.
		\end{equation}
	\end{Proposition}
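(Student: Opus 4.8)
The plan is to reduce everything to the two-factor identity \eqref{eq:Kuv} and then pass to a limit. First I would prove $\K_{uv} = \K_u \oplus u\K_v$ by a telescoping of orthogonal decompositions. The key observation is that multiplication by any inner function $w$ is an isometry of $H^2$ onto $wH^2$, and hence carries orthogonal decompositions to orthogonal decompositions. Applying this to $H^2 = \K_v \oplus vH^2$ and multiplying through by $u$ gives $uH^2 = u\K_v \oplus uvH^2$. Substituting into $H^2 = \K_u \oplus uH^2$ yields the chain $H^2 = \K_u \oplus u\K_v \oplus uvH^2$. Comparing this with $H^2 = \K_{uv} \oplus uvH^2$, and noting that $u\K_v \subseteq uH^2 \perp \K_u$, produces \eqref{eq:Kuv}.

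Next I would set $v_n := \prod_{j=1}^{n} u_j$ and apply \eqref{eq:Kuv} with the factorization $v_n = v_{n-1} u_n$ to get $\K_{v_n} = \K_{v_{n-1}} \oplus v_{n-1}\K_{u_n}$. A straightforward induction then delivers the finite decomposition $\K_{v_n} = \K_{u_1} \oplus \bigoplus_{k=2}^{n} \big(\prod_{j=1}^{k-1} u_j\big)\K_{u_k}$, the summands being mutually orthogonal because the decomposition is built by nesting: for $m < n$ one has $v_{n-1}\K_{u_n} \subseteq v_m H^2$ while $v_{m-1}\K_{u_m} \perp v_m H^2$.

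The crux is the passage to the infinite product, and I expect this to be the main obstacle. Since $v_n \mid u$, the spaces $v_n H^2$ form a decreasing chain, each containing $uH^2$, so $\K_{v_n} = H^2 \ominus v_n H^2$ is an increasing chain of subspaces of $\K_u$. By the orthogonal-complement (De Morgan) relation for subspaces, $\bigvee_n \K_{v_n} = \big(\bigcap_n v_n H^2\big)^{\perp}$, so the whole statement reduces to the identity $\bigcap_n v_n H^2 = uH^2$. To prove this, I would take $f \in \bigcap_n v_n H^2$, so that $v_n$ divides the inner factor of $f$ for every $n$. Because the product $u = \prod_j u_j$ converges, $u$ is the least inner function divisible by every $v_n$: its Blaschke zeros (with multiplicity) and its singular measure are precisely the increasing unions of those of the $v_n$. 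Hence $u$ divides the inner factor of $f$, i.e. $f \in uH^2$; the reverse inclusion $uH^2 \subseteq \bigcap_n v_n H^2$ is immediate from $v_n \mid u$.

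Finally, with $\bigcap_n v_n H^2 = uH^2$ established, I would conclude $\K_u = \bigvee_n \K_{v_n} = \overline{\bigcup_n \K_{v_n}}$, the union being increasing. Taking the closure of the increasing union of the finite orthogonal decompositions converts the finite direct sums into the asserted countable orthogonal direct sum $\K_u = \K_{u_1} \oplus \bigoplus_{n \geq 2}\big(\prod_{j=1}^{n-1} u_j\big)\K_{u_n}$, and specializing to two factors recovers \eqref{eq:Kuv}. The only delicate point is the divisibility bookkeeping in the third paragraph, where one must verify that no extra inner factor can appear in $\bigcap_n v_n H^2$ beyond $u$ itself.
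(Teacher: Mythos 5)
Your proof is correct, but it takes a genuinely different route from the paper's. The paper only proves the two-factor identity \eqref{eq:Kuv}, and it does so with reproducing kernels: dividing the algebraic identity $1 - \overline{u(\lambda)v(\lambda)}u(z)v(z) = \bigl(1 - \overline{u(\lambda)}u(z)\bigr) + \overline{u(\lambda)}u(z)\bigl(1 - \overline{v(\lambda)}v(z)\bigr)$ by $1 - \overline{\lambda}z$ gives $k_{\lambda}^{uv} = k_{\lambda}^{u} + \overline{u(\lambda)}u\,k_{\lambda}^{v}$, which is then read off as the statement that the kernel of $\K_{uv}$ splits as the sum of the kernels of the mutually orthogonal subspaces $\K_u$ and $u\K_v$; for the general (infinite) decomposition the paper simply cites the literature. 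Your telescoping argument $H^2 = \K_u \oplus uH^2 = \K_u \oplus u\K_v \oplus uvH^2$, compared against $H^2 = \K_{uv}\oplus uvH^2$, is a clean subspace-level proof of \eqref{eq:Kuv} that avoids kernels entirely, and — unlike the paper — you actually carry the argument through to the infinite product. The one ingredient your route needs beyond the finite induction is the identity $\bigcap_n v_nH^2 = uH^2$ for the partial products $v_n$, which you correctly reduce to the fact that an inner factor divisible by every $v_n$ is divisible by $u$ (its zeros with multiplicity and its singular measure are the increasing limits of those of the $v_n$); that is exactly the ``divisibility bookkeeping'' you flag, and it is standard from the canonical factorization. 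In exchange for that extra lemma you get the full Proposition; the paper's kernel computation is shorter and more in the spirit of Section \ref{Section:RK}, but it only yields the two-factor case and still requires observing that a decomposition of reproducing kernels into kernels of orthogonal subspaces forces the corresponding decomposition of the space.
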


	We refer the reader to \cite{AC70a, Kriete} for further details, although we provide a proof
	of the special case \eqref{eq:Kuv} in Subsection \ref{Subsection:RKBP}.

\subsection{Three unitary operators} The following three transformations from one model space to another are often useful. 

	\begin{Proposition}\label{three-unitary}
	Suppose $u$ is a fixed inner function. 
	\begin{enumerate}
		\item If $w \in \D$, then
		\begin{equation*}
			 f \mapsto \frac{\sqrt{1 - |w|^2}}{1 - \overline{w} u} f
		\end{equation*}
		defines a unitary operator from $\mathcal{K}_{u}$ onto $\mathcal{K}_{\frac{u-w}{1-\overline{w}u}}$.
		\item If $\phi$ is a disk automorphism, i.e., $\phi(z) = \zeta (z - a)(1 - \overline{a} z)^{-1}$ for some $a \in \D$ and $\zeta \in \T$, then 
		\begin{equation*}
		f \mapsto \sqrt{\phi'} (f \circ \phi)
		\end{equation*}
		defines a unitary operator from $\K_{u}$ onto $\K_{u \circ \phi}$.
		\item If $u^{\#}(z) := \overline{u(\overline{z})}$, then, in terms of boundary functions, the map
		\begin{equation*}
		f(\zeta) \mapsto \overline{\zeta} f(\overline{\zeta}) u^{\#}(\zeta)
		\end{equation*}
		is a unitary operator from $\K_u$ into $\K_{u^{\#}}$.
	
	\end{enumerate}
	\end{Proposition}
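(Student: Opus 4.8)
The plan is to treat the three maps in parallel by the same four-step recipe. For each I would first check that the target symbol ($\frac{u-w}{1-\overline w u}$, $u\circ\phi$, and $u^\#$) is again inner, so that the codomain is a genuine model space; then verify that the map carries $\K_u$ into the target space using the boundary description $\K_u=H^2\cap u\overline{zH^2}$ from Proposition \ref{Prop:fgzu} (equivalently, $f\in\K_u$ iff $f\in H^2$ and $\overline u f\in\overline{zH^2}$); then check that the map is isometric; and finally obtain surjectivity by exhibiting an inverse of the same form, using that each of the three operations is invertible. Here the membership test always reduces to showing that $\overline v\,(\text{image})\in\overline{zH^2}$, where $v$ denotes the relevant target inner function. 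I expect the main obstacle to be the isometry in case (i): unlike (ii) and (iii), whose operators are visibly unimodular-weighted and hence isometric on all of $L^2$, the multiplier in (i) is \emph{not} unimodular on $\T$, so isometry holds only on $\K_u$ and must be extracted from a subtler orthogonality argument.

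For (i), set $v=\frac{u-w}{1-\overline w u}$ and $Vf=\frac{\sqrt{1-|w|^2}}{1-\overline w u}f$. Since $|u|<1$ on $\D$, the factor $\frac{1}{1-\overline w u}$ lies in $H^{\infty}$, so $Vf\in H^2$. On $\T$ one has $\overline v=\frac{1-\overline w u}{u-w}$, and writing $f=u\,\overline{zh}$ with $h\in H^2$ gives $\overline v\,Vf=\sqrt{1-|w|^2}\,\overline{z\,h/(1-\overline w u)}\in\overline{zH^2}$, whence $Vf\in\K_v$. The crux is the isometry: a partial-fraction computation on $\T$ yields the identity $\frac{1-|w|^2}{|1-\overline w u|^2}-1=\frac{\overline w u}{1-\overline w u}+\frac{w}{u-w}$, and for $f,g\in\K_u$ each of these two terms pairs $f$ (resp.\ $g$) against a function lying in $uH^2$, which is orthogonal to $\K_u$; hence $\inner{Vf,Vg}=\inner{f,g}$. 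Surjectivity follows by checking that multiplication by $\frac{\sqrt{1-|w|^2}}{1+\overline w v}$ (itself a map of the same type, with parameter $-w$) inverts $V$, using $(1+\overline w v)(1-\overline w u)=1-|w|^2$ on $\D$. This $V$ is the Crofoot transform.

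For (ii), the key observation is that the weighted composition $U_\phi f=\sqrt{\phi'}\,(f\circ\phi)$ is already unitary on all of $L^2(\T)$. Since $\phi'=\frac{\zeta(1-|a|^2)}{(1-\overline a z)^2}$ is zero-free, $\sqrt{\phi'}$ has an analytic branch in $H^{\infty}$, and the change of variables $\int_\T(g\circ\phi)|\phi'|\,dm=\int_\T g\,dm$ together with $|\sqrt{\phi'}|^2=|\phi'|$ gives $\norm{U_\phi f}=\norm{f}$. I would then show $U_\phi H^2=H^2$ and, via the multiplicativity $(uh)\circ\phi=(u\circ\phi)(h\circ\phi)$, that $U_\phi(uH^2)=(u\circ\phi)H^2$; because $U_\phi$ is unitary it carries the orthogonal difference $\K_u=H^2\ominus uH^2$ onto $\K_{u\circ\phi}=H^2\ominus(u\circ\phi)H^2$. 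The only delicate point is fixing the branch of $\sqrt{\phi'}$ consistently so that $U_{\phi^{-1}}U_\phi=I$ exactly, via the chain rule $(\phi^{-1})'\cdot(\phi'\circ\phi^{-1})=1$.

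For (iii), let $\tau g(\zeta)=g(\overline\zeta)$, which is unitary on $L^2$ because $\zeta\mapsto\overline\zeta$ preserves $m$; the map in question is $Wf=\overline\zeta\,(\tau f)\,u^{\#}$, a composition of $\tau$ with multiplications by the unimodular functions $\overline\zeta$ and $u^{\#}$, so the isometry is immediate. For membership I would note that $\overline{u^{\#}}\,Wf=\overline\zeta\,\tau f$ has only Fourier frequencies $\leqslant -1$, hence lies in $\overline{zH^2}$; and substituting $f=u\,\overline{zh}$ and using $\tau u=\overline{u^{\#}}$ together with $\tau(\overline g)=\overline{\tau g}$ collapses $Wf$ to $\overline{h(\overline\zeta)}=h^{\#}\in H^2$. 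Thus $Wf\in H^2\cap u^{\#}\overline{zH^2}=\K_{u^{\#}}$, and since $\#$ is an involution the same construction applied with $u^{\#}$ inverts $W$, giving the unitarity claimed.
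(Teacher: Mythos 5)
Your proposal is correct in all three parts, and it is worth noting that the paper itself does not actually prove Proposition \ref{three-unitary}: it defers (i) to Crofoot and Sarason, and (ii) and (iii) to \cite[Prop.~4.1, Lemma~4.3]{TTOSIUES}, offering only the hints that (ii) is a change of variables whose range is the delicate point and that (iii) should be read through boundary values as in Proposition \ref{Prop:fgzu}. Your arguments for (ii) and (iii) are exactly the ones those hints point to, and they are carried out correctly: the unitarity of $f \mapsto \sqrt{\phi'}(f\circ\phi)$ on all of $L^2$ plus the identity $U_\phi(uH^2)=(u\circ\phi)H^2$ handles the range question for (ii), and the factorization of the map in (iii) into $\tau$, multiplication by $\overline{\zeta}$, and multiplication by the unimodular $u^{\#}$, combined with the substitution $f=u\overline{zh}$ collapsing the image to $h^{\#}$, is precisely the boundary-value argument. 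For (i) you supply what the paper omits entirely; I checked your partial-fraction identity $\tfrac{1-|w|^2}{|1-\overline{w}u|^2}-1=\tfrac{\overline{w}u}{1-\overline{w}u}+\tfrac{w}{u-w}$ on $\T$ and the orthogonality of each summand against $\K_u$ (the first pairs $f$ against $g$ through a function in $uH^2$, the second is its complex conjugate and pairs $g$ against $f$ the same way), and both are right; the inverse via $(1+\overline{w}v)(1-\overline{w}u)=1-|w|^2$ settles surjectivity. Your one flagged subtlety, the branch of $\sqrt{\phi'}$, is genuinely harmless: any branch gives a unitary, and $U_{\phi^{-1}}U_\phi$ is a unimodular scalar regardless, so surjectivity survives. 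The only cosmetic omission is that you announce but do not carry out the verification that the three target symbols are inner; each is a one-line observation.
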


	The first unitary operator is due to Crofoot \cite{Crofoot} (a more detailed discussion of these so-called \emph{Crofoot transforms}
	can be found in \cite[Sec.~13]{Sarason}).  When dealing with a model space $\K_u$ where $u$ has a nontrivial Blaschke factor,
	one can often make the simplifying assumption that $u(0) = 0$.  Also of great importance is the fact that Crofoot transforms
	intertwine the conjugations (see Section \ref{SectionConjugation}) on the corresponding models spaces \cite[Lem.~3.1]{Sarason}. The second unitary operator is clearly unitary on $H^2$ (change of variables formula). Showing that its restriction to $\K_u$ has the correct range is a little tricky and the proof of this can be found in \cite[Prop.~4.1]{TTOSIUES}. The third unitary operator depends on a discussion on conjugations that we take up in more detail in Section \ref{SectionConjugation}. Indeed, on the face of it, the map does not even seem to take analytic functions to analytic functions. However, when one thinks of model space functions in terms of their boundary values as in Proposition \ref{Prop:fgzu}, everything works out. The proof can be found in \cite[Lemma 4.3]{TTOSIUES}.

\section{Model operators}
	One of the main reasons that model spaces are worthy of study in their own right
	stems from the so-called \emph{model theory} developed by Sz.-Nagy and Foia\c{s}, which
	shows that a wide range of Hilbert space operators can be realized concretely as restrictions of the backward shift
	operator to model spaces.  These ideas have since been generalized in many directions 
	(e.g., de Branges-Rovnyak spaces, vector-valued Hardy spaces, etc.) and we make no attempt to provide
	an encyclopedic account of the subject, referring the read instead to the influential texts \cite{Bercovici, MR2760647, N1, RR, N3}.

\subsection{Contractions}
	In the following, we let $\h$ denote a separable complex Hilbert space.
	If $T$ is an arbitrary bounded operator on $\h$, then we may assume that
	$\norm{T} \leqslant 1$ (i.e., $T$ is a \emph{contraction}).  As such, $T$ enjoys a decomposition
	of the form $T = K \oplus U$ (see \cite[p.~8]{MR2760647} for more details) where $U$ is a unitary operator and $K$ is a \emph{completely nonunitary (CNU) contraction} 
	(i.e., there does not exist a reducing subspace for $K$ upon which $K$ is unitary).
		
	Since the structure 
	and behavior of unitary operators is well-understood, via the spectral theorem, the study of arbitrary bounded Hilbert space operators can
	therefore be focused on CNU contractions.  With a few additional hypotheses, one can obtain a concrete
	\emph{functional model} for such operators.
	In light of the fact that the following theorem justifies, to an extent, 
	the further study of model subspaces, we feel obliged to
	provide a complete proof.  Moreover, the proof itself is surprisingly simple 
	and is worthy of admiration for its own sake.

	\begin{Theorem}[Sz.-Nagy-Foia\c{s}]\label{NF-Thm}
		If $T$ is a contraction on a Hilbert space that satisfies
		\begin{enumerate}
			\addtolength{\itemsep}{0.5\baselineskip}
			\item $\norm{T^n \vec{x}}\to 0$ for all $\vec{x}\in \h$, 
			\item $\rank(I - T^*T) = \rank(I -TT^*) = 1$,
		\end{enumerate}
		then there exists an inner function $u $ such that $T$ is unitarily equivalent to 
		$S^*|\K_{u }$, where $S^{*}$ is the backward shift operator on $H^2$. 
	\end{Theorem}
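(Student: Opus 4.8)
The plan is to build an explicit isometry $V\colon \h \to H^2$ that intertwines $T$ with the backward shift $S^*$ and then to recognize its range as a model space through Beurling's theorem. Write $D_T := (I - T^*T)^{1/2}$ for the \emph{defect operator} and $\mathcal{D}_T := \cran D_T$ for the associated \emph{defect space}. Hypothesis (ii) forces $\dim \mathcal{D}_T = \rank(I - T^*T) = 1$, so after fixing a unit vector spanning $\mathcal{D}_T$ we may identify the vector-valued Hardy space $H^2(\mathcal{D}_T)$ with the scalar space $H^2$; this is precisely where the first half of (ii) is used.

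The engine of the construction is a telescoping identity. For each $\vec{x} \in \h$ and each $k \geqslant 0$,
\[
\norm{T^k \vec{x}}^2 - \norm{T^{k+1}\vec{x}}^2 = \inner{(I - T^*T) T^k \vec{x}, T^k \vec{x}} = \norm{D_T T^k \vec{x}}^2,
\]
so summing over $k$ and invoking hypothesis (i), which sends the tail $\norm{T^n \vec{x}}^2 \to 0$, yields
\[
\norm{\vec{x}}^2 = \sum_{k=0}^{\infty} \norm{D_T T^k \vec{x}}^2 .
\]
This identity says exactly that the linear map $V\colon \h \to H^2$ given by $V\vec{x} := \sum_{k \geqslant 0} (D_T T^k \vec{x})\, z^k$ (under the identification $\mathcal{D}_T \cong \C$) is a well-defined isometry, since its defining series converges in $H^2$ with $\norm{V\vec{x}} = \norm{\vec{x}}$. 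A one-line check confirms the intertwining: because the backward shift deletes the constant term and reindexes, $S^*(V\vec{x}) = \sum_{k \geqslant 0}(D_T T^{k+1}\vec{x})\, z^k = V(T\vec{x})$, that is, $VT = S^* V$.

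Being an isometry, $V$ has closed range $\M := \ran V \subseteq H^2$, and the relation $S^* V = VT$ shows $S^* \M \subseteq \M$, so $\M$ is backward-shift invariant. By Corollary \ref{cyclic-vec-S-star} the only possibilities are $\M = \{0\}$, $\M = H^2$, or $\M = \K_u$ for some inner $u$. The first is excluded since $V$ is isometric and $\h \neq \{0\}$. To rule out $\M = H^2$ I would invoke the remaining hypothesis: if $V$ were onto, then $T$ would be unitarily equivalent to $S^*$ on all of $H^2$, whence $I - TT^*$ would be unitarily equivalent to $I - S^*S = 0$ (as $S$ is an isometry), giving $\rank(I - TT^*) = 0$ and contradicting the second half of (ii). Hence $\M = \K_u$, and $V$ is a unitary from $\h$ onto $\K_u$ with $VT = (S^*|\K_u)V$, which is the asserted unitary equivalence.

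The conceptual crux — and the only step involving a genuine choice rather than routine computation — is the passage from ``$V$ is an isometric intertwiner'' to ``$\ran V$ is an honest model space.'' Everything rests on the Beurling-type classification in Corollary \ref{cyclic-vec-S-star} of the closed $S^*$-invariant subspaces, combined with the two rank conditions doing two different jobs: $\rank(I - T^*T) = 1$ makes the target the \emph{scalar} space $H^2$, while $\rank(I - TT^*) = 1$ is exactly what keeps the embedding from being surjective. The remaining verifications (convergence of the series defining $V$, and the one-dimensionality of $\mathcal{D}_T$) are routine consequences of the telescoping identity and hypothesis (ii).
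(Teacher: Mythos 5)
Your proposal is correct and follows essentially the same route as the paper: the same telescoping identity $\sum_k \norm{D T^k \vec{x}}^2 = \norm{\vec{x}}^2$ built from hypothesis (i), the same isometric embedding into $H^2$ (your $V$ is the paper's $\Phi$ written as a power series rather than a coefficient sequence), the same intertwining $VT = S^*V$, and the same appeal to the Beurling-type classification of $S^*$-invariant subspaces, with $\rank(I-TT^*)=1$ ruling out surjectivity. Your explicit computation that $T \cong S^*$ would force $I - TT^* \cong I - S^*S = 0$ is slightly more detailed than the paper's remark, but it is the same argument.
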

			
	\begin{proof}
		We let $\cong$ denote the unitary equivalence of Hilbert spaces or their operators.
		Since the \emph{defect operator} $D = \sqrt{I - T^*T}$ has rank $1$, we see that $\ran D \cong \C$ so that
		\begin{equation}\label{Hwidehat}
			\widetilde{\h} := \bigoplus_{n=1}^{\infty} \ran D \cong H^2.
		\end{equation}
		It follows that for each $n \in \N$ we have 
		\begin{align*}
			\sum_{j=0}^n \norm{DT^j \vec{x}}^2
			&= \sum_{j=0}^n \big< (I - T^*T)^{\frac{1}{2}} T^j\vec{x}, (I - T^*T)^{\frac{1}{2}} T^j\vec{x} \big>\\
			&= \sum_{j=0}^n \big< (I - T^*T)T^j\vec{x}, T^j\vec{x} \big>\\
			&= \sum_{j=0}^n \Big(\inner{T^j \vec{x}, T^j \vec{x}} - \inner{T^*T T^j\vec{x}, T^j\vec{x}} \Big) \\
			&= \sum_{j=0}^n \big(\norm{T^j\vec{x}}^2 - \norm{T^{j+1}\vec{x}}^2 \big)\\
			&= \norm{\vec{x}}^2 - \norm{T^{n+1}\vec{x}}^2.
		\end{align*}
		Since, by hypothesis,  $\|T^n \vec{x}\| \to 0$ for each $\vec{x} \in \mathcal{H}$, we conclude that
		\begin{equation*}
		    \sum_{j=0}^{\infty} \norm{DT^j \vec{x}}^2 = \norm{\vec{x}}^2
		    \tag{$\vec{x} \in \h$}
		\end{equation*}
		  and hence the operator $\Phi:\h\to H^2$ defined by
		\begin{equation*}
			\Phi \vec{x} = (D\vec{x}, DT\vec{x}, DT^2\vec{x}, DT^3 \vec{x},\ldots)
		\end{equation*}
		is an isometric embedding of $\h$ into $H^2$ (here we have identified a function in $H^2$
		with its sequence of Taylor coefficients).
		Since $\Phi$ is an isometry, its image
	 			\begin{equation*}
			\Phi\h = (D\h, DT\h, DT^2\h,\ldots)
		\end{equation*}
		is closed in $H^2$ and clearly $S^*$-invariant.  
		Therefore, by Corollary \ref{cyclic-vec-S-star},  $\ran \Phi = \K_u$ for some $u$ (the possibility that $\ran \Phi = H^2$ is ruled out
		because the following argument would show that $T \cong S^*$, violating the assumption that
		$\rank(I - TT^*) = 1$).		
		Now observe that	
		\begin{equation*}
			\Phi T\vec{x}
			= (DT\vec{x}, DT^2\vec{x}, DT^3\vec{x},\ldots)
			= S^* \Phi \vec{x}. \tag{$\vec{x} \in \h$}
		\end{equation*}
		Letting $U:\h\to\ran\K_u$ denote the unitary operator obtained from $\Phi$ by reducing its codomain from
		$H^2$ to $\K_u$,  it follows that
		\begin{equation*}
			UT = (S^*|\K_u)U.
		\end{equation*}
		Thus $T$ is unitarily equivalent to the restriction of $S^*$ to $\K_u$.
	\end{proof}
	
	The case of higher defect indices (i.e., $\rank(I - T^*T) = \rank(I -TT^*) \geqslant n$, $n > 1$),  is treated by moving to vector-valued Hardy spaces $H^2(\mathcal{E})$ and
	operator-valued inner functions.  However, in making such a 
	move one sacrifices a large variety of tools and techniques inherited from classical function theory
	(making the theory of model spaces more difficult and less interesting from our perspective).
	For instance, the multiplication of operator-valued inner functions is no longer commutative
	and the corresponding factorization theory is more complicated.

\subsection{Spectrum of an inner function}
	In light of the Sz.-Nagy-Foia\c{s} Theorem, the restriction of the backward shift to the spaces $\K_u$
	is of premier importance.  However, it turns out that the \emph{compressed shift}, the compression $S_u:\K_u\to\K_u$
	of the unilateral shift to $\K_u$, is more prevalent in the literature.  Here 
	\begin{equation*}
		S_uf = P_u(zf),
	\end{equation*}
	where $P_u$ denotes the orthogonal
	projection from $L^2$ onto $\K_u$ (see Subsection \ref{Subsection:RKBP}).
	In reality, the distinction alluded to above is artificial since the operator $S_u$ is unitarily equivalent to the restriction of the backward
	shift to the space $\K_{u^{\#}}$ where $u^{\#}(z) = \overline{u(\overline{z})}$. In fact the unitary operator which intertwines $S_{u}$ and $S^{*}|\K_{u^{\#}}$ is the one given in Proposition \ref{three-unitary}. See \cite[Lemma 4.5]{TTOSIUES} for more details on this. 
	
	There happens to be two convenient ways to describe the spectrum $\sigma(S_u)$ of $S_u$.
	In fact, one can show that $\sigma(S_u)$
	coincides with the so-called \emph{spectrum} of the inner function $u$, as defined below
	(this result is sometimes referred to as the Liv\v{s}ic-M\"oller Theorem).

	\begin{Definition}
		The \emph{spectrum} of an inner function is the set 
		\begin{equation}\label{eq:SpecDef}
			\sigma(u) := \left\{\lambda \in \D^{-}: \liminf_{z \to \lambda} |u(z)| = 0\right\}.
		\end{equation}
	\end{Definition}
	
	It follows from the preceding definition, for instance, that every zero of $u$ in $\D$ belongs to $\sigma(u)$.
	Moreover, any limit point of zeros of $u$ must also lie in $\sigma(u)$.  On the other hand, if $u$ is the singular inner function associated to
	the singular measure $\mu$,
	then the nontangential limit of $u$ is zero $\mu$-almost everywhere \cite[Thm.~6.2]{Garnett}.   Even further, 
	if $\lambda \in \D$ and $u(\lambda) = 0$ then $c_{\lambda}(z) = (1 - \overline{\lambda} z)^{-1}$ belongs to $\K_u$ and satisfies
	$S^{*} c_{\lambda} = \overline{\lambda} c_{\lambda}$.  In other words, $\overline{\lambda}$ belongs to $\sigma_{p}(S^{*}|\K_u)$, the point spectrum (i.e., set of eigenvalues) of $S^{*}|\K_u$. 
	These observations suggest the following important theorem that
	provides us with a convenient description
	of the spectrum of an inner function in terms of its canonical factorization.
		
	\begin{Theorem}\label{TheoremSpectrum}
		If $u = B_{\Lambda} S_{\mu}$, where $B_{\Lambda}$ is a Blaschke product with zero sequence
		$\Lambda$ and $S_{\mu}$ is a singular inner function with corresponding
		singular measure $\mu$, then
		\begin{enumerate}\addtolength{\itemsep}{0.5\baselineskip}
			\item $\sigma(S_u) =  \sigma(u) = \Lambda^- \cup \operatorname{supp} \mu$,
			\item $\sigma_{\text{p}}(S^{*}|\K_u) = \{\overline{\lambda}: \lambda \in \Lambda\}$, 
			\item $\sigma_{\text{p}}(S_u) = \Lambda$,
		\end{enumerate}
		where $\Lambda^-$ denotes the closure of $\Lambda$.
	\end{Theorem}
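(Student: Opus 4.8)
The plan is to treat the point-spectrum statements (ii) and (iii) first, since they are essentially eigenvector computations, and then to bootstrap them into the full spectral identity (i). For (ii), I would solve the eigenvalue equation directly: if $S^*f=\overline\mu f$ with $f\in\K_u\setminus\{0\}$, then $(f-f(0))/z=\overline\mu f$ forces $f=f(0)(1-\overline\mu z)^{-1}$, so $f$ is a scalar multiple of the Cauchy kernel $c_\mu$ and, in particular, $|\mu|<1$. Since $c_\mu\in\K_u$ if and only if $\inner{uh,c_\mu}=u(\mu)h(\mu)=0$ for all $h\in H^2$, i.e.\ if and only if $u(\mu)=0$, the eigenvalues of $S^*|\K_u$ are exactly $\{\overline\lambda:\lambda\in\Lambda\}$. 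For (iii), I would check that for each zero $\lambda$ of $u$ the function $g_\lambda:=u/(z-\lambda)$ lies in $\K_u$ (a one-line boundary-value computation using $|u|=1$ a.e.) and satisfies $S_ug_\lambda=P_u(zg_\lambda)=\lambda g_\lambda$, via $zg_\lambda=u+\lambda g_\lambda$ and $P_uu=0$; conversely $S_uf=\lambda f$ gives $(z-\lambda)f\in uH^2$, and if $u(\lambda)\neq0$ this forces $f\in uH^2\cap\K_u=\{0\}$, while $|\lambda|=1$ is excluded because $S_u$ is a contraction. Hence $\sigma_{\mathrm p}(S_u)=\Lambda$.

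For (i) I would prove the two-sided chain $\Lambda^-\cup\operatorname{supp}\mu\subseteq\sigma(S_u)\subseteq\sigma(u)\subseteq\Lambda^-\cup\operatorname{supp}\mu$. The inclusion $\sigma(S_u)\subseteq\sigma(u)$ is where the functional calculus does the work. Recalling that $\phi\mapsto\phi(S_u):=P_uT_\phi|_{\K_u}$ is a multiplicative map on $H^\infty$ with $u(S_u)=0$ (both facts follow from $P_u(\phi\cdot uq)=0$ for $q\in H^2$), I would argue as follows. If $u$ continues analytically across a point $\lambda\in\D^-$ and $u(\lambda)\neq0$, then $v:=(u-u(\lambda))/(z-\lambda)$ belongs to $H^\infty$, and from $(z-\lambda)v=u-u(\lambda)$ one gets $(S_u-\lambda)v(S_u)=u(S_u)-u(\lambda)I=-u(\lambda)I$, so $S_u-\lambda$ is invertible and $\lambda\notin\sigma(S_u)$. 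For $\lambda\in\D$ analytic continuation is automatic, so this already shows $\sigma(S_u)\cap\D\subseteq\Lambda$; for $\lambda\in\T\setminus(\Lambda^-\cup\operatorname{supp}\mu)$ I would invoke the standard fact that an inner function continues analytically, and unimodularly so that $u(\lambda)\neq0$, across every arc disjoint from $\Lambda^-\cup\operatorname{supp}\mu$. The same continuation fact gives $\sigma(u)\subseteq\Lambda^-\cup\operatorname{supp}\mu$ at once, since off $\Lambda^-\cup\operatorname{supp}\mu$ one has $\liminf_{z\to\lambda}|u(z)|=|u(\lambda)|>0$.

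It remains to show $\Lambda^-\cup\operatorname{supp}\mu\subseteq\sigma(S_u)$. The inclusion $\Lambda\subseteq\sigma_{\mathrm p}(S_u)\subseteq\sigma(S_u)$ comes from (iii), and closedness of the spectrum upgrades this to $\Lambda^-$. For $\operatorname{supp}\mu$ I would produce approximate eigenvectors for $S_u^*$ from normalized reproducing kernels. Writing $k_w(z)=(1-\overline{u(w)}u(z))/(1-\overline wz)$, a direct computation (using $S^*c_w=\overline wc_w$ and $c_w(\overline wz-1)=-1$) yields $S_u^*k_w=\overline wk_w-\overline{u(w)}\,S^*u$, with $S^*u=(u-u(0))/z\in\K_u$ a fixed vector. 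Since $\|k_w\|^2=(1-|u(w)|^2)/(1-|w|^2)$, the normalized kernels $\widehat k_w$ satisfy $\|(S_u^*-\overline w)\widehat k_w\|=|u(w)|\,\sqrt{(1-|w|^2)/(1-|u(w)|^2)}\,\sqrt{1-|u(0)|^2}$. Taking $w=r\zeta\to\zeta$ radially with $\zeta\in\operatorname{supp}\mu$, I would use the measure-theoretic fact that a singular measure has symmetric derivative $+\infty$ $\mu$-almost everywhere, which together with identity \eqref{Poisson-sym} forces $|S_\mu(r\zeta)|\to0$, hence $|u(r\zeta)|\to0$, for $\mu$-a.e.\ $\zeta$; for such $\zeta$ the displayed norm tends to $0$, so $\overline\zeta\in\sigma(S_u^*)$ and thus $\zeta\in\sigma(S_u)$. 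As the set of such $\zeta$ has full $\mu$-measure its closure contains $\operatorname{supp}\mu$, and closedness of $\sigma(S_u)$ finishes the inclusion.

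The main obstacle is this last step. Everything else is either a short eigenvector computation or an application of the functional calculus, but locating $\operatorname{supp}\mu$ inside the spectrum genuinely mixes operator theory with the boundary analysis of singular inner functions: one must know both the reproducing-kernel action of $S_u^*$ and the almost-everywhere behavior of the symmetric derivative of a singular measure. A secondary technical point to handle with care is the analytic-continuation fact for inner functions across $\T\setminus(\Lambda^-\cup\operatorname{supp}\mu)$, which I am assuming as standard but which is used in two places above.
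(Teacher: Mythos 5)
The paper states Theorem \ref{TheoremSpectrum} without proof: it records only the motivating observations (zeros of $u$ lie in $\sigma(u)$, $c_\lambda$ is an $S^*|\K_u$-eigenvector when $u(\lambda)=0$, and $u\to 0$ nontangentially $\mu$-a.e.) and attributes the result to the literature as the Liv\v{s}ic--M\"oller theorem, so there is no in-text argument to compare against. Your proof is the standard one and I find it essentially correct and complete: the eigenvector computations for (ii) and (iii), the $H^\infty$-functional-calculus inversion of $S_u-\lambda$ via $v=(u-u(\lambda))/(z-\lambda)$ off $\Lambda^-\cup\operatorname{supp}\mu$, and the approximate-eigenvector argument with $S_u^*k_w=\overline{w}k_w-\overline{u(w)}S^*u$ (which I verified) all work. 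Two small points deserve more care in a final write-up. First, ruling out $|\lambda|=1$ in (iii) ``because $S_u$ is a contraction'' is too quick as stated: a contraction can have unimodular eigenvalues in general; you need the standard lemma that if $T$ is a contraction and $Tf=\lambda f$ with $|\lambda|=1$ then $T^*f=\overline{\lambda}f$, which then contradicts (ii) since $S_u^*=S^*|\K_u$. Second, your chain $\Lambda^-\cup\operatorname{supp}\mu\subseteq\sigma(S_u)\subseteq\sigma(u)\subseteq\Lambda^-\cup\operatorname{supp}\mu$ requires the middle inclusion $\sigma(S_u)\subseteq\sigma(u)$, but what your functional-calculus step actually delivers is $\sigma(S_u)\subseteq\Lambda^-\cup\operatorname{supp}\mu$; to close the loop for the equality involving $\sigma(u)$ you should either note explicitly that $\Lambda^-\cup\operatorname{supp}\mu\subseteq\sigma(u)$ (which is immediate: zeros and their limit points lie in the closed set $\sigma(u)$, and your own observation that $|u(r\zeta)|\to 0$ for $\mu$-a.e.\ $\zeta$ handles $\operatorname{supp}\mu$) or run the resolvent construction under the hypothesis $\liminf_{z\to\lambda}|u(z)|>0$ rather than under analytic continuability. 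Neither issue affects the substance of the argument.
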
		

\section{Reproducing kernels}\label{Section:RK}

	As a closed subspace of the reproducing kernel Hilbert space\footnote{A Hilbert space $\mathcal{H}$ of analytic functions on a domain $\Omega \subset \C$ is called a {\em reproducing kernel Hilbert space} if for each $\lambda \in \Omega$ there is a function $K_{\lambda} \in \mathcal{H}$ for which $f(\lambda) = \langle f, K_{\lambda}\rangle_{\mathcal{H}}$ for all $f \in \mathcal{H}$. All of the classical Hilbert spaces of analytic functions on the disk (e.g., Hardy, Bergman, Dirichlet, etc.) are reproducing kernel Hilbert spaces and understanding the properties of the kernels often yields valuable information about the functions in $\mathcal{H}$. A few good sources for this are \cite{MR0051437, Paulsen, AMC}.} $H^2$, each model space
	$\K_u$ itself possesses a reproducing kernel.  In this section, we identify these kernels and explore
	their basic properties.

\subsection{Basic properties}\label{Subsection:RKBP}
	Recalling that the Cauchy kernels $c_{\lambda} = (1- \overline{\lambda}z)^{-1}$ are the 
	reproducing kernels for the Hardy space (see Subsection \ref{Subsection:CSZK}), 
	let us attempt to compute the corresponding reproducing kernels for
	$\K_u$.  We first observe that if $f = uh$ belongs to $u H^2$, then
	\begin{equation*}
		f(\lambda) = u(\lambda)h(\lambda) = u(\lambda) \inner{h,c_{\lambda}} = 
		u(\lambda) \inner{f\overline{u},c_{\lambda}} = \inner{f, \overline{u(\lambda)}u c_{\lambda}},
	\end{equation*}
	from which it follows that the reproducing kernel for $uH^2$ is given by
	\begin{equation*}	
		\overline{u(\lambda)}u(z) c_{\lambda}(z).
	\end{equation*}
	If $f$ belongs to $\K_u$, then it follows that
	\begin{align*}
		f(\lambda) 
		&= \inner{f,c_{\lambda}} \\
		&= \inner{f,c_{\lambda}} - u(\lambda)\inner{f, uc_{\lambda}} \\
		&= \big<f, (1 - \overline{u(\lambda)}u)c_{\lambda} \big>.
	\end{align*}
	Moreover, the function $(1 - \overline{u(\lambda)}u)c_{\lambda}$ belongs to $\K_u$ since
	\begin{align*}
		\big<uh,(1 - \overline{u(\lambda)}u)c_{\lambda} \big>
		&= u(\lambda)h(\lambda) - u(\lambda) \inner{uh,uc_{\lambda}} \\
		&= u(\lambda)h(\lambda) - u(\lambda) \inner{h,c_{\lambda}} \\
		&= u(\lambda)h(\lambda) - u(\lambda)h(\lambda) \\
		&= 0
	\end{align*}
	for all $h$ in $H^2$.  Putting this all together we see that
	\begin{equation*}
		f(\lambda) = \inner{f,k_{\lambda}}, \tag{$f \in \K_u$}
	\end{equation*}
	where
	\begin{equation}\label{eq:Kernel}
		k_{\lambda}(z) = \frac{1 - \overline{u(\lambda)}u(z)}{1 - \overline{\lambda}z}.
	\end{equation}
	The function $k_{\lambda}$ is called the \emph{reproducing kernel} for $\K_u$.	 
	When dealing with more than one model space at a time, the notation $k_{\lambda}^u$
	is often used to denote the reproducing kernel for $\K_u$.
	
	From the general theory of reproducing kernels \cite{MR0051437, Paulsen, AMC}, it follows that if $e_1,e_2,\ldots$ is
	any orthonormal basis for $\K_u$, then
	\begin{equation}\label{eq:KONB}
		k_{\lambda}(z) = \sum_{n\geqslant 1} \overline{e_n(\lambda)} e_n(z),
	\end{equation}
	the sum converging in the norm of $\K_u$ \cite{AMC}.
	The following example
	illustrates both expressions \eqref{eq:Kernel} and \eqref{eq:KONB}.

	\begin{Example}
		If $u = z^n$, then $\K_u = \operatorname{span}\{1,z,z^2,\ldots,z^{n-1}\}$ and
		\begin{equation*}
			k_{\lambda}(z) = \frac{1 - \overline{\lambda}^{n} z^n}{1 - \overline{\lambda} z} 
			= 1 + \overline{\lambda} z + \overline{\lambda}^2 z^2 + \cdots + \overline{\lambda}^{n - 1} z^{n -1}.
		\end{equation*}
	\end{Example}
	
	We are now in a position to provide an elegant derivation of the decomposition \eqref{eq:Kuv}. Indeed, for inner functions $u$ and $v$, divide the trivial equality 
	\begin{equation*}
		1 - \overline{u(\lambda)v(\lambda)}u(z)v(z)
		= \big(1 - \overline{u(\lambda)}u(z) \big) + \overline{u(\lambda)}u(z)\big( 1 - \overline{v(\lambda)}v(z) \big)
	\end{equation*}
	by $1 - \overline{\lambda}z$ to get the identity 
	\begin{equation*}
		k_{\lambda}^{uv} = k_{\lambda}^u + \overline{u(\lambda)}u(z) k_{\lambda}^v.
	\end{equation*}
	However, the preceding is simply a restatement, in terms of reproducing kernels, of the fact that
	$\K_{uv} = \K_u \oplus u\K_v$.

	Since 
	\begin{equation}\label{eq:KIHI}
		|k_{\lambda}(z)| \leqslant \frac{2}{1-|\lambda|}
	\end{equation}
	for each $z$ in $\D$,
	it follows that each $k_{\lambda}$ belongs to $\K_u \cap H^{\infty}$.
	Moreover, the kernel functions $k_{\lambda}$ are among the few readily identifiable functions
	that belong to $\K_u$.  As such, they provide invaluable insight into the structure and properties of model spaces.
	In fact, kernel functions often wind up being ``test functions'' for various statements about model spaces. 
	As an example of what we mean by this, suppose that the quantity
	\begin{equation*}
		\|k_{\lambda}\|^2 = \frac{1 - |u(\lambda)|^2}{1 - |\lambda|^2}
	\end{equation*}
	remains bounded as $\lambda \to \zeta \in \T$ along some path $\Gamma \subset \D$, then since 
	\begin{equation*}
		|f(\lambda)| = |\langle f, k_{\lambda} \rangle| \leqslant \|f\| \|k_{\lambda}\|,
	\end{equation*}
	we see that every $f$ in $\K_u$ is bounded along $\Gamma$. We will see more of these types of results in Section \ref{Section:BB}.

	The orthogonal projection 
	\begin{equation} \label{Pu-def}
		P_u:L^2\to\K_u
	\end{equation}
	 arises frequently in the study of Hankel operators, model operators and
	 truncated Toeplitz operators.  This important operator can be expressed in a simple
	 manner using reproducing kernels.
	
	\begin{Proposition}\label{Prop:Pu}
		For each $f$ in $L^2$ and $\lambda$ in $\D$,
		\begin{equation}\label{eq:Puf}
			(P_u f)(\lambda) = \inner{ f , k_{\lambda}}.
		\end{equation}
	\end{Proposition}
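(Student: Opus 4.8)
The plan is to combine the reproducing property of $k_\lambda$ on $\K_u$ with the self-adjointness of the orthogonal projection $P_u$. First I would observe that, by its very definition in \eqref{Pu-def}, $P_u$ maps $L^2$ into $\K_u$, so for any $f$ in $L^2$ the function $P_u f$ is a genuine element of the model space $\K_u$. Since $k_\lambda$ from \eqref{eq:Kernel} is the reproducing kernel for $\K_u$, I may apply the reproducing identity to $g = P_u f \in \K_u$, which gives $(P_u f)(\lambda) = \inner{P_u f, k_\lambda}$.

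Next I would invoke two elementary facts about the orthogonal projection $P_u : L^2 \to \K_u$. As an orthogonal projection it is self-adjoint, so $\inner{P_u f, h} = \inner{f, P_u h}$ for all $f, h$ in $L^2$; and because $k_\lambda$ already lies in $\K_u = \ran P_u$, it is fixed by the projection, that is, $P_u k_\lambda = k_\lambda$. Chaining these together yields
\begin{equation*}
    (P_u f)(\lambda) = \inner{P_u f, k_\lambda} = \inner{f, P_u k_\lambda} = \inner{f, k_\lambda},
\end{equation*}
which is precisely the asserted identity \eqref{eq:Puf}.

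There is essentially no serious obstacle here: the entire content is the interplay between the reproducing property, which is valid only on $\K_u$ (this is exactly why the preliminary observation $P_u f \in \K_u$ is needed in order to legitimately evaluate at $\lambda$ via the kernel), and the self-adjointness of the projection. The single point worth a word of care is that $P_u$ is defined on all of $L^2$ rather than just on $H^2$; noting that $k_\lambda \in \K_u \subseteq H^2 \subseteq L^2$ ensures that each inner product above is well defined and that the fixed-point relation $P_u k_\lambda = k_\lambda$ makes sense.
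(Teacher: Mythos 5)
Your proof is correct and is essentially identical to the paper's: both arguments chain the reproducing property of $k_{\lambda}$ on $\K_u$ with the self-adjointness of $P_u$ and the fact that $P_u k_{\lambda} = k_{\lambda}$, merely written in the opposite order. The extra remarks you make (that $P_u f$ lies in $\K_u$ so the kernel identity applies, and that $k_{\lambda} \in \K_u$) are exactly the implicit justifications behind the paper's one-line computation.
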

	
	\begin{proof}
		Using the selfadjointness of $P_u$ we get 
		\begin{equation*}
		\inner{f,k_{\lambda}} = \inner{f , P_uk_{\lambda}} = \inner{ P_u f, k_{\lambda}} = (P_u f)(\lambda). \qedhere
		\end{equation*}
	\end{proof}

	Rewriting \eqref{eq:Puf} as an integral we obtain
	\begin{equation*}
		(P_u f)(\lambda) = \int_{\T} f(\zeta) \frac{1 - u(\lambda) \overline{u(\zeta)}}{1 - \lambda \overline{\zeta}}\,dm(\zeta),
	\end{equation*}
	which can be used to estimate or control the behavior of $P_u f$.  In particular, the preceding formula
	highlights the dependence of $P_uf$ on the behavior of $u$ itself.

\subsection{Density results}
	As mentioned earlier, in general, the model spaces $\K_u$ contain few readily identifiable functions.  One therefore relies
	heavily upon the kernel functions $k_{\lambda}$ in the study of model spaces since they are
	among the few explicitly describable functions contained in $\K_u$.
	As the following proposition shows, one can always find a collection of kernels whose span is dense in the
	whole space.

	\begin{Proposition}\label{P-density-easy}
		If $\Lambda$ is a subset of $\D$ such that either
		(i) $\Lambda$ has an accumulation point in $\D$, or 
		(ii) $\sum_{\lambda \in \Lambda} \big(1-|\lambda|\big)$ diverges, 
		then for any inner function $u$
		\begin{equation*}
			\bigvee\{ k_{\lambda} : \lambda \in \Lambda \}  = \K_{u}.
		\end{equation*}
	\end{Proposition}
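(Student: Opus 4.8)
The plan is to prove the density statement by showing that the orthogonal complement of $\{k_{\lambda} : \lambda \in \Lambda\}$ inside $\K_u$ is trivial; since $\K_u$ is a Hilbert space, this is equivalent to the asserted equality of closed linear spans. First I would suppose $f \in \K_u$ satisfies $f \perp k_{\lambda}$ for every $\lambda \in \Lambda$. Invoking the reproducing property from \eqref{eq:Kernel}, namely $f(\lambda) = \inner{f, k_{\lambda}}$, this orthogonality is precisely the condition that $f$ vanish at every point of $\Lambda$. The whole argument then reduces to showing that the only $f \in \K_u \subseteq H^2$ vanishing on all of $\Lambda$ is $f \equiv 0$, and this is where each of the two hypotheses enters.

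Under hypothesis (i), $f$ is analytic on $\D$ and vanishes on a subset of $\D$ having an accumulation point in $\D$, so the identity theorem forces $f \equiv 0$. Under hypothesis (ii), I would argue by contradiction: if $f$ were not identically zero, then its zero set in $\D$, counted with multiplicity, would contain $\Lambda$ and would have to satisfy the Blaschke condition \eqref{eq:BlaschkeCondition}, by the characterization of zero sets of nonzero $H^2$ functions quoted earlier. But then $\sum_{\lambda \in \Lambda}(1 - |\lambda|)$ would be dominated by the convergent sum over all zeros of $f$, contradicting the divergence assumed in (ii). Hence $f \equiv 0$ in this case as well.

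In either case the orthogonal complement of $\{k_{\lambda} : \lambda \in \Lambda\}$ in $\K_u$ is $\{0\}$, and therefore $\bigvee\{k_{\lambda} : \lambda \in \Lambda\} = \K_u$, as claimed. I do not expect a serious obstacle: the proof is genuinely short, and the only thing to watch is that the two hypotheses are precisely calibrated to the two classical reasons an $H^2$ function with a prescribed zero set must vanish---the identity theorem in case (i) and the Blaschke zero-set theorem in case (ii). The single conceptual step worth stating carefully is that the reproducing property converts the abstract orthogonality condition into the concrete vanishing condition $f|_{\Lambda} = 0$, after which everything is classical function theory and the argument is uniform in the inner function $u$.
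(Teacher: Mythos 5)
Your proposal is correct and follows exactly the paper's argument: pass to the orthogonal complement, use the reproducing property to turn orthogonality into vanishing on $\Lambda$, then apply the Identity Theorem in case (i) and the Blaschke zero-set condition \eqref{eq:BlaschkeCondition} in case (ii). Nothing further is needed.
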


	\begin{proof}
		The containment $\subseteq$ is obvious.  
		If $f \perp k_{\lambda}$ for all $\lambda \in \Lambda$, then $f$ vanishes on $\Lambda$. 
		(i) If $\Lambda$ has an accumulation point in $\D$, then the Identity Theorem implies that $f \equiv 0$. 
		(ii) If $\sum_{\lambda \in \Lambda} (1-|\lambda|)$ diverges, then $f \equiv 0$ since the zero set of a nonzero 
		$H^2$ function must satisfy the Blaschke condition \eqref{eq:BlaschkeCondition}.
	\end{proof}	
	
	A somewhat more general result is provided by \cite[Thm.~1]{ICS}:
	
	\begin{Proposition}
		If $u$ is a nonconstant inner function, then there exists a subset $E \subset \D$ of area measure zero such that
		for each $w$ in $\D \backslash E$, the inverse image $u^{-1}(\{w\})$ is nonempty, $u'(\lambda)\neq 0$ for all
		$\lambda$ in $u^{-1}(\{w\})$, and
		\begin{equation*}
			\bigvee\big\{ k_{\lambda} : \lambda \in u^{-1}(\{w\}) \big\}  = \K_{u}.
		\end{equation*}
	\end{Proposition}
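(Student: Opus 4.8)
The plan is to reduce the statement to the single question of completeness of reproducing kernels at the zeros of a Blaschke product, which then admits a clean direct proof. Throughout, fix $w \in \D$ and write $u_w := \frac{u - w}{1 - \overline{w} u}$ for the \emph{Frostman shift} of $u$. Since $u_w = \tau_w \circ u$ with $\tau_w(z) = \frac{z-w}{1-\overline{w}z}$ a disk automorphism, $u_w$ is inner, and its zero set is exactly $u_w^{-1}(\{0\}) = u^{-1}(\{w\})$. The entire argument rests on Frostman's theorem: there is an exceptional set $E_1 \subset \D$ of logarithmic capacity zero (hence of area measure zero) such that $u_w$ is a \emph{Blaschke product} for every $w \in \D \setminus E_1$.

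First I would assemble the exceptional set. Because $u$ is nonconstant, $u'$ is analytic and not identically zero, so its zeros form a discrete, hence countable, set; let $E_2 := \{u(\lambda) : u'(\lambda) = 0\}$ be the corresponding countable set of critical values, which has area measure zero. Put $E := E_1 \cup E_2$. For $w \notin E$ the function $u_w$ is a nonconstant Blaschke product, so it has at least one zero and $u^{-1}(\{w\}) = u_w^{-1}(\{0\})$ is nonempty. A direct computation gives $u_w'(\lambda) = u'(\lambda)/(1 - |w|^2)$ at each $\lambda$ with $u(\lambda)=w$, so $w \notin E_2$ forces $u'(\lambda) \neq 0$ and every zero of $u_w$ is simple. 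This delivers the first two assertions and sets up the third.

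It remains to show $\bigvee\{k_\lambda : \lambda \in u^{-1}(\{w\})\} = \K_u$ for $w \notin E$, equivalently that the only $f \in \K_u$ with $f(\lambda) = \inner{f,k_\lambda} = 0$ for all $\lambda \in u^{-1}(\{w\})$ is $f = 0$. Suppose $f$ is such a function. Since $u_w$ is a Blaschke product whose zeros are precisely these points and are all simple, $f$ vanishes on the full zero set of $u_w$, so $u_w \mid f$, i.e.\ $g := f/u_w \in H^2$; by the $F$-property (the Proposition following Proposition \ref{Prop:CATO}) we even get $g \in \K_u$. Now clear denominators in $f = u_w g$: multiplying by $1 - \overline{w} u$ gives $f + w g = u(g + \overline{w} f)$. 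The left-hand side lies in $\K_u$ and the right-hand side lies in $uH^2$; as $\K_u = (uH^2)^\perp$, both must vanish. Hence $f = -w g$ and $g + \overline{w} f = 0$, whence $(1 - |w|^2)g = 0$, so $g = 0$ and $f = 0$.

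The genuinely nontrivial input is Frostman's theorem in the first step: it is exactly what guarantees that for almost every $w$ the shift $u_w$ carries no singular inner factor, so that the mere vanishing of $f$ at the points of $u^{-1}(\{w\})$ is enough to force $u_w \mid f$. Everything afterward — the countability of critical values, the $F$-property, and the orthogonal-decomposition trick — is elementary. (Alternatively, the final step can be routed through the Crofoot transform of Proposition \ref{three-unitary}(1), which maps $\K_u$ unitarily onto $\K_{u_w}$ and sends $k_\lambda$ to a scalar multiple of the Cauchy kernel $c_\lambda$ for $\lambda \in u^{-1}(\{w\})$; completeness then reduces to the standard fact that the Cauchy kernels at the simple zeros of a Blaschke product $B$ span $\K_B$.)
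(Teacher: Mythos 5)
The paper offers no proof of this proposition at all --- it is quoted directly from \cite{ICS} --- so there is nothing internal to compare against; I will simply assess your argument, which is correct and is essentially the standard one. All three ingredients check out: Frostman's theorem does give a set $E_1$ of logarithmic capacity zero (hence of area measure zero) off of which $u_w=(u-w)/(1-\overline{w}u)$ is a Blaschke product, necessarily nonconstant and hence with at least one zero; the critical values of $u$ form a countable set $E_2$ because the zeros of $u'\not\equiv 0$ are discrete in $\D$; and the chain-rule computation $u_w'(\lambda)=u'(\lambda)/(1-|w|^2)$ at points of $u^{-1}(\{w\})$ is right, so for $w\notin E_1\cup E_2$ the shift $u_w$ is a Blaschke product whose zeros are simple and are exactly $u^{-1}(\{w\})$. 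Your completeness step is also sound: if $f\in\K_u$ vanishes on $u^{-1}(\{w\})$ then $u_w\mid f$ (this is precisely where the absence of a singular inner factor is needed), $g=f/u_w$ lies in $\K_u$ by the $F$-property, and the identity $f+wg=u(g+\overline{w}f)$ exhibits a single element of $\K_u\cap uH^2=\{0\}$, forcing $f=-wg$ and $g+\overline{w}f=0$, hence $(1-|w|^2)g=0$ and $f=0$. The parenthetical alternative via the Crofoot transform is equally valid and arguably more in the spirit of the survey, since it reduces the claim verbatim to Proposition \ref{Prop:Span}: the unitary $f\mapsto\sqrt{1-|w|^2}\,(1-\overline{w}u)^{-1}f$ carries $k_{\lambda}^{u}$ to $\sqrt{1-|w|^2}\,c_{\lambda}$ for $\lambda\in u^{-1}(\{w\})$, so completeness in $\K_u$ is equivalent to completeness of the Cauchy kernels at the simple zeros of the Blaschke product $u_w$ in $\K_{u_w}$. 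Either way, I see no gaps.
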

	
	\begin{Remark} We will take up the general question of whether a sequence of kernel functions has a dense linear span in $\K_u$ when we discuss completeness problems in Section \ref{CP}. 
	\end{Remark}
	
	Since each kernel function $k_{\lambda}$ belongs to $H^{\infty}$ by \eqref{eq:KIHI},
	an immediate consequence of either of the preceding propositions is the following useful result.

	\begin{Proposition}\label{P-H-dense}
		The linear manifold $\K_u \cap H^{\infty}$ is dense in $\K_u$.
	\end{Proposition}
	
	A second proof can be obtained by noting that
	\begin{equation*}
		\K_u = \bigvee \{S^{* n} u: n \geqslant 1\}
	\end{equation*}
	and that each of the backward shifts $S^{*n}u$ of $u$ belongs to $H^{\infty}$.
	In certain sophisticated applications one requires a dense subset of $\K_u$
	consisting of functions having a certain degree of smoothness on $\D^{-}$. 	
	This next result is a restatement of Proposition \ref{Prop:FDMS} for infinite Blaschke products. 
	
	\begin{Proposition}\label{Prop:Span}
		If $u$ is a Blaschke product with simple zeros $\lambda_1,\lambda_2,\ldots$, then 
		\begin{equation*}
			\bigvee\{ k_{\lambda_1}, k_{\lambda_2},\ldots\} = \K_u.
		\end{equation*}
		Thus $\K_u$ contains a dense subset whose elements are continuous on $\D^-$ and whose
		boundary functions are infinitely differentiable on $\T$.
	\end{Proposition}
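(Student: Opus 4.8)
The plan is to prove the spanning equality by a one-line orthogonality argument and then to read the smoothness statement directly off the explicit shape of the kernels. The key simplification is that, since each $\lambda_n$ is a zero of $u$, the reproducing kernel in \eqref{eq:Kernel} collapses to a Cauchy kernel from \eqref{szego-k}: because $u(\lambda_n)=0$,
\[
k_{\lambda_n}(z) = \frac{1 - \ov{u(\lambda_n)}\,u(z)}{1 - \ov{\lambda_n}z} = \frac{1}{1 - \ov{\lambda_n}z} = c_{\lambda_n}(z).
\]
The inclusion $\bigvee\{k_{\lambda_n}\} \subseteq \K_u$ is automatic since each $k_{\lambda_n}\in\K_u$, so everything reduces to showing that the orthocomplement of $\operatorname{span}\{k_{\lambda_n}\}$ inside $\K_u$ is trivial.

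First I would take $f\in\K_u$ with $f\perp k_{\lambda_n}$ for every $n$. By the reproducing property this says $f(\lambda_n)=\inner{f,k_{\lambda_n}}=0$ for all $n$, i.e.\ $f$ vanishes at every zero of $u$. The one genuine step is to upgrade this to $f\in uH^2$: writing the canonical factorization $f = B_f S_f F_f$, the Blaschke factor $B_f$ has a zero set containing the simple zero set $\{\lambda_n\}$ of $u$, so $u \mid B_f$ as Blaschke products (Definition \ref{divides-def}) and hence $f/u = (B_f/u)S_f F_f \in H^2$. Thus $f\in uH^2$. But $f\in\K_u=(uH^2)^{\perp}$, so $f\in uH^2\cap(uH^2)^{\perp}=\{0\}$, and the spanning equality follows. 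Note that this argument really does need the hypothesis $f\in\K_u$: the zero set of an infinite Blaschke product is a Blaschke sequence that may accumulate only on $\T$, so Proposition \ref{P-density-easy} does not apply and mere vanishing on $\{\lambda_n\}$ is not enough to force $f\equiv 0$.

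For the concluding smoothness assertion I would simply take the dense set to be $\operatorname{span}\{k_{\lambda_n}\}$. By the displayed identity each of its elements is a finite linear combination of Cauchy kernels $c_{\lambda_n}(z)=(1-\ov{\lambda_n}z)^{-1}$, hence a rational function whose only possible poles are the points $1/\ov{\lambda_n}$; since $|\lambda_n|<1$ these satisfy $|1/\ov{\lambda_n}|>1$ and lie outside $\D^{-}$. Consequently every such function extends analytically across $\T$ to a neighborhood of $\D^{-}$, so it is continuous on $\D^{-}$ and its boundary function is real-analytic, and in particular $C^{\infty}$, on $\T$. The only place requiring care is the divisibility step $f\in uH^2$ above; it is standard via canonical factorization, but it is exactly the point where the structure of $\K_u$ (rather than a naive zero-counting argument) is used.
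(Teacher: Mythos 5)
Your proof is correct and follows essentially the same route as the paper: orthogonality to all $k_{\lambda_n}$ forces $f(\lambda_n)=0$, hence $u\mid f$, hence $f\in uH^2\cap(uH^2)^{\perp}=\{0\}$, and the smoothness claim is read off from the identity $k_{\lambda_n}=c_{\lambda_n}$. Your expansion of the divisibility step via the canonical factorization, and your remark that Proposition \ref{P-density-easy} does not apply here, merely make explicit what the paper leaves implicit.
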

		
	\begin{proof}
		Indeed, suppose that $f$ is a function in $\K_u$ that satisfies $\inner{f,k_{\lambda_n}} = 0$ for all $n$.
		This implies that $f(\lambda_n) = 0$ for all $n$,
		whence $u|f$ so that $f$ belongs to $uH^2$.  In other words, $f$ must be identically zero.
		Since
		\begin{equation}\label{k-c}
			k_{\lambda_n}(z)  = \frac{1 - \overline{u(\lambda_n)}u(z)}{1 - \overline{\lambda_n}z} 
			= \frac{1}{1 - \overline{\lambda_n}z}  = c_{\lambda_n}(z),
		\end{equation}
		the second statement follows immediately.
	\end{proof}
		
	If the zeros of $u$ are not simple, then the preceding statement is true if one also includes the appropriate 
	derivatives of the kernel functions in the spanning set.  When $u$ is not a Blaschke product, finding 
	a dense set of functions in $\K_u$, each of which is continuous on $\D^-$, is much more difficult.  A deep result in this
	direction is due to A.B.~Aleksandrov \cite{MR1359992}, who proved the following astonishing result (see \cite{CMR} for a discussion of this).

	\begin{Theorem}[Aleksandrov]
		For an inner function $u$, the set $\K_u \cap \mathcal{A}$ is dense in $\K_u$.
		Here $\mathcal{A}$ denotes the disk algebra, the Banach algebra of all $H^{\infty}$ functions that are
		continuous on $\D^-$.
	\end{Theorem}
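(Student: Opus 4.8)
The plan is to make two reductions and then confront the genuinely hard point, namely the \emph{existence} of enough disk-algebra functions in $\K_u$. By Proposition~\ref{P-H-dense} the bounded functions $\K_u \cap H^{\infty}$ are already dense, so it suffices to approximate a fixed $f \in \K_u \cap H^{\infty}$ by elements of $\K_u \cap \A$. Next observe that the theorem is essentially trivial when $u$ is a finite Blaschke product (then $\K_u$ is finite dimensional and consists of rational functions with poles off $\D^-$, hence lies in $\A$) and is already handled when $u$ is a Blaschke product with simple zeros: Proposition~\ref{Prop:Span} shows that the Cauchy kernels $c_{\lambda_n}$ span $\K_u$ densely, and each $c_{\lambda_n}$ is rational with its pole outside $\D^-$, so $c_{\lambda_n} \in \A$. (Multiple zeros are absorbed by including derivative kernels, which are again rational.) Thus the whole difficulty is concentrated in the case where $u$ carries a singular inner factor, where it is not even clear a priori that $\K_u$ contains \emph{any} nonconstant continuous functions---this is precisely what makes the result surprising.

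To manufacture continuous elements I would exploit Frostman's theorem: for every $w \in \D$ outside an exceptional set of logarithmic capacity zero, the Frostman shift $u_w := (u - w)(1 - \overline{w} u)^{-1}$ is a \emph{Blaschke product}, whose zeros are exactly the $w$-points of $u$. Fixing such a $w$, the model space $\K_{u_w}$ therefore enjoys the abundance of continuous (indeed rational) functions supplied by the Blaschke case above, and the first part of Proposition~\ref{three-unitary} furnishes a unitary Crofoot transform from $\K_u$ onto $\K_{u_w}$. So $\K_u$ is unitarily equivalent, for a richly parametrized family of $w$, to model spaces that are saturated with disk-algebra functions.

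The main obstacle is that this unitary equivalence does \emph{not} transport the disk algebra: the Crofoot transform is multiplication by $\sqrt{1 - |w|^2}\,(1 - \overline{w}u)^{-1}$, a factor that inherits all the boundary discontinuities of $u$. Indeed one checks that it sends the continuous Cauchy kernels $c_{\lambda}$ of $\K_{u_w}$ back to the reproducing kernels $k_{\lambda}^{u} = (1 - \overline{u(\lambda)}u)(1 - \overline{\lambda}z)^{-1}$ of $\K_u$, which contain $u$ and hence are \emph{not} in $\A$. So a naive pullback fails, and the real work---which I expect to be the crux---is to produce continuous elements of $\K_u$ directly and to show their span is dense. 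Here I would follow Aleksandrov and pass to his family of Clark measures $\{\mu_{\alpha}\}_{\alpha \in \T}$, determined by $\tfrac{1 - |u(z)|^2}{|\alpha - u(z)|^2} = \int_{\T} P_{z}\, d\mu_{\alpha}$, together with the disintegration identity $\int_{\T} g\, dm = \int_{\T}\big(\int_{\T} g\, d\mu_{\alpha}\big)\, dm(\alpha)$. The idea is to average continuous boundary data against this family so as to land back inside $\K_u$ while retaining continuity, and then to run a weak-$*$/normal-families argument showing that no nonzero $g \in \K_u$ can annihilate all the continuous elements so produced. Verifying that the averaged functions genuinely belong to $\A \cap \K_u$ and that they are dense is the heart of the matter and the step I expect to be hardest.
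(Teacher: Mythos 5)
There is a genuine gap, and it is worth being precise about where it sits. First, note that the paper itself offers \emph{no} proof of this statement: it is quoted as a deep theorem of Aleksandrov \cite{MR1359992}, described as ``astonishing,'' with the reader referred to \cite{CMR} for a discussion. So there is no argument in the paper to compare yours against; the only question is whether your proposal stands on its own. It does not. Your two reductions are correct but they only dispose of the cases in which the theorem has no content: when $u$ is a (finite or infinite) Blaschke product, Propositions~\ref{Prop:FDMS} and~\ref{Prop:Span} already hand you a dense set of rational functions with poles off $\D^-$, and nobody would call the result astonishing there. The entire substance of the theorem is the case you correctly isolate --- $u$ with a nontrivial singular factor, where $\sigma(u)$ may be all of $\T$ and it is unclear that $\K_u \cap \A$ contains a single nonconstant function --- and for that case you produce no argument. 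You do not construct even one continuous element of $\K_u$, you do not verify that your proposed ``averages of continuous boundary data against the Clark family'' land in $\K_u$ (or in $\A$), and you do not carry out the density argument; your final sentence concedes that this is ``the heart of the matter and the step I expect to be hardest.'' A proof that defers its only nontrivial step to an unexecuted plan is not a proof.

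Two smaller remarks. Your observation that the Crofoot transform fails to transport the disk algebra --- because the multiplier $\sqrt{1-|w|^2}\,(1-\overline{w}u)^{-1}$ inherits the boundary discontinuities of $u$, and because it pulls the continuous Cauchy kernels of $\K_{u_w}$ back to the kernels $k_\lambda^u$, which contain $u$ --- is correct and shows good judgment; but it means the entire Frostman-shift paragraph, while true, contributes nothing toward the conclusion and could be deleted. Second, the Clark-measure program you gesture at is at least pointed in the right direction (Aleksandrov's argument does run through the disintegration machinery of Section~\ref{Section:Clark} and a duality argument with Cauchy transforms of measures), but the actual execution is delicate and is precisely why the result is attributed to Aleksandrov rather than regarded as folklore. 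If you want to complete this, you should consult \cite{MR1359992} or \cite{CMR} and work through the construction in detail rather than asserting that such a construction ought to exist.
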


	This theorem is remarkable due to the fact that $\K_u$ often does not contain a single readily identifiable
	function that is continuous on $\D^-$.  For example, if $u$ is the singular inner function
	\begin{equation*}
		u(z) = \exp\left(\frac{z + 1}{z - 1}\right),
	\end{equation*}
	then it is not at all obvious that $\K_u$ contains any functions that are continuous on $\D^{-}$, 
	let alone a dense set of them.  See \cite{MR2198372} for some related results concerning when 
	$\K_u$ contains smoother functions than those in $\mathcal{A}$. 

\subsection{Cauchy-Szeg\H{o} bases}\label{Subsection:Riesz}

	If $u$ is an inner function and $u(\lambda)=0$, then \eqref{k-c} tells us that
	$k_{\lambda}(z) = c_{\lambda}(z) = (1 - \overline{\lambda} z)^{-1}$.  Thus the kernel functions corresponding
	to zeros of $u$ are extremely simple functions to work with since they do not
	depend explicitly upon $u$.  In certain situations, these functions can be used
	to construct bases for models spaces (in a sense to be made precise shortly).
	However, no two Cauchy kernels are orthogonal, so one 
	cannot hope to obtain an orthonormal basis of kernel functions.\footnote{One can sometimes obtain 
		orthonormal bases consisting of \emph{boundary kernels} (see Section \ref{Section:Clark}).}
	We therefore need a somewhat more flexible definition \cite{MR1946982}.

		\begin{Definition}
			A linearly independent sequence $\vec{x}_n$ in a Hilbert space $\h$ is called a \emph{Riesz basis} for $\h$ if 
			$\bigvee\{ \vec{x}_1, \vec{x}_2,\ldots\} = \h$ and there
			exist constants $M_1,M_2>0$ such that
			\begin{equation*}
				M_1 \sum_{i=1}^n |a_i|^2 \leqslant \norm{ \sum_{i=1}^n a_i \vec{x}_i}^2 \leqslant M_2 \sum_{i=1}^n |a_i|^2
			\end{equation*}
			for all finite numerical sequences $a_1,a_2,\ldots,a_n$.
		\end{Definition}
		
	The following seminal result of L.~Carleson tells us when the normalized
	reproducing kernels corresponding to the zero set of a Blaschke product forms a Riesz basis for $\K_u$ \cite[p.~133]{N1}
	
	\begin{Theorem}\label{Theorem:Interpolation}
		If $u$ is a Blaschke product with simple zeros $\lambda_1,\lambda_2,\ldots$, then the normalized kernels
		$$\frac{k_{\lambda_n}}{\norm{k_{\lambda_n}}} = \frac{ \sqrt{1 - |\lambda_n|^2}}{1 - \overline{\lambda_n}z}$$ 
		form a Riesz basis for $\K_u$ if and only if there exists a $\delta > 0$ such that
		\begin{equation}
			\delta <  \prod_{\substack{j=1\\ j \neq i} }^{\infty} \left|\frac{\lambda_i - \lambda_j}{1 - \overline{\lambda_j} \lambda_i} \right|.
			\tag{$i=1,2,\ldots$}
		\end{equation}
	\end{Theorem}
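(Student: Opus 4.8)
The plan is to translate the Riesz basis condition into a statement about an associated evaluation operator and then to recognize that statement as the defining property of a Carleson interpolating sequence, whose equivalence with the displayed separation condition is precisely Carleson's interpolation theorem.

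First I would record the relevant normalizations. Since $u(\lambda_n) = 0$, equation \eqref{k-c} gives $k_{\lambda_n} = c_{\lambda_n}$, so $\|k_{\lambda_n}\|^2 = (1-|\lambda_n|^2)^{-1}$ and the normalized kernels are $\widehat{k}_n(z) = \sqrt{1-|\lambda_n|^2}\,(1-\overline{\lambda_n}z)^{-1}$. Their inner products are $\langle \widehat{k}_i, \widehat{k}_j\rangle = \sqrt{(1-|\lambda_i|^2)(1-|\lambda_j|^2)}\,(1-\overline{\lambda_j}\lambda_i)^{-1}$, so the Gram matrix of the system is exactly the matrix whose off-diagonal decay is governed by the pseudohyperbolic quantities appearing in the statement.

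Next I would observe that the two inequalities defining a Riesz basis say precisely that the synthesis operator $S : \ell^2 \to \K_u$, $S\{a_n\} = \sum_n a_n \widehat{k}_n$, is bounded above and below; completeness of the span is supplied by Proposition \ref{Prop:Span}, so $S$ is an isomorphism onto $\K_u$. Passing to adjoints, $S^* = A$ is the analysis (evaluation) operator $Af = \{\langle f, \widehat{k}_n\rangle\} = \{\sqrt{1-|\lambda_n|^2}\,f(\lambda_n)\}$, and $S$ is an isomorphism exactly when $A$ maps $\K_u$ boundedly and bijectively onto $\ell^2$. Injectivity of $A$ is again the completeness in Proposition \ref{Prop:Span}, so the remaining content is that $A$ be bounded (the upper Carleson estimate $\sum_n (1-|\lambda_n|^2)|f(\lambda_n)|^2 \leqslant C\|f\|^2$ for all $f$) and surjective onto $\ell^2$ (the free interpolation assertion that every target sequence in $\ell^2$ is attained). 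Thus the normalized kernels form a Riesz basis if and only if $\{\lambda_n\}$ is an $H^2$-interpolating sequence.

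Finally I would identify these two properties with the separation condition. Boundedness of $A$ is, by Carleson's embedding theorem, equivalent to $\sum_n (1-|\lambda_n|^2)\delta_{\lambda_n}$ being a Carleson measure, while surjectivity of $A$ is the free interpolation property; together they say $\{\lambda_n\}$ is a Carleson interpolating sequence. Carleson's interpolation theorem, combined with the Shapiro--Shields passage between the $H^\infty$ and $H^2$ formulations, asserts that this is equivalent to the uniform separation condition displayed in the statement. The easy half of that equivalence (interpolation forces separation) follows by testing interpolation against the sub-products $B_i = \prod_{j \neq i} b_{\lambda_j}$ of elementary Blaschke factors. The converse --- that a positive separation constant forces the Carleson measure estimate and permits the explicit construction of interpolating functions --- is the deep content and, I expect, the main obstacle; it is where the Carleson measure machinery genuinely does the work, and in a survey one would quote it rather than reproduce it.
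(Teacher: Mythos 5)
The paper offers no proof of this theorem: it is quoted as a seminal result of Carleson with a citation to \cite[p.~133]{N1}, so there is no argument of the authors' to compare yours against. Your reduction is the correct and standard one — identify $k_{\lambda_n}$ with $c_{\lambda_n}$ via \eqref{k-c}, convert the Riesz basis inequalities into the invertibility of the synthesis operator and hence of its adjoint, the weighted evaluation map $f \mapsto \{\sqrt{1-|\lambda_n|^2}\,f(\lambda_n)\}$, use Proposition \ref{Prop:Span} for completeness and injectivity, and then invoke the Shapiro--Shields/Carleson interpolation theorem to equate surjectivity plus the embedding estimate with uniform separation — and you are right that the last equivalence is the genuinely deep step, which in a survey one quotes rather than reproves.
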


There is a generalization of this result where one can somewhat relax the condition that the $\lambda_n$ are the zeros of $u$ \cite{HNP}.

\subsection{Takenaka-Malmquist-Walsh bases}\label{Subsection:TMW}

	Unlike the Hardy space $H^2$ itself, the model spaces $\K_u$ do not come pre-equipped with a
	canonical orthonormal basis.  It turns out that an orthonormal basis for $\K_u$, where $u$ is a Blaschke product,
	can be obtained by orthogonalizing the kernel functions corresponding to the zeros of $u$.

	For $w$ in $\D$, we let
	\begin{equation}\label{eq:b}
		b_w(z) = \frac{z-w}{1-\overline{w}z}.
	\end{equation}	
	If $u$ is a Blaschke product with simple zeros $\lambda_1,\lambda_2,\ldots$, then observe that
	\begin{equation*}
		\inner{b_{\lambda_1}k_{\lambda_2},k_{\lambda_1}}  = b_{\lambda_1}(\lambda_1)k_{\lambda_2}(\lambda_1)
		 = 0.
	\end{equation*}
	Similarly, we have
	\begin{equation}
		\inner{b_{\lambda_1}b_{\lambda_2}k_{\lambda_3},k_{\lambda_1}}  = 
		\inner{b_{\lambda_1}b_{\lambda_2}k_{\lambda_3},k_{\lambda_2}}  = 0.
	\end{equation}
	This process suggests the following iterative definition:
	If $u$ is a Blaschke product with zeros $\lambda_1,\lambda_2,\ldots$, let
	$$v_{1}(z) := \frac{\sqrt{1 - |\lambda_1|^2}}{1 - \overline{\lambda_1} z}, 
	$$
	\begin{equation*}v_k(z) =
		\Big( \prod_{i=1}^{k-1} b_{\lambda_i} \Big)
		\dfrac{ \sqrt{1 - |\lambda_k|^2} }{ 1 - \overline{\lambda_k}z} . \tag{$k\geqslant 2$}
	\end{equation*}
	One can show that $\{v_n: n \geqslant 1\}$ is an orthonormal basis for $\K_u$ that is 
	the Gram-Schmidt orthonormalization of the kernels $k_{\lambda_1},k_{\lambda_2}, \ldots$.  

	The terminology here is not completely standard.  It seems that these bases first appeared
	in Takenaka's 1925 paper involving finite Blaschke products \cite{Tak}.  The classic text \cite{N1}
	considers the general case where $u$ is a potentially infinite Blaschke product,
	referring to this result as the Malmquist-Walsh Lemma.  In light of Takenaka's
	early contribution to the subject, the authors chose in \cite{NLEPHS} to refer to such a basis for
	$\K_u$ as a \emph{Takenaka-Malmquist-Walsh (TMW) basis}. 

	Another important family of orthonormal bases of $\K_u$ is (sometimes) provided by the Aleksandrov-Clark spectral theory of
	rank-one unitary perturbations of model operators.
	The so-called \emph{modified Aleksandrov-Clark bases} are particularly important in the study of
	finite-dimensional model spaces.
	To discuss these bases, we first require a few words about boundary behavior and angular derivatives.


\section{Boundary behavior}\label{Section:BB}

\subsection{Pseudocontinuations} 
		Functions in model spaces enjoy certain ``continuation'' properties across $\T$. 
		These types of continuation properties appear in many places \cite{RS} and in many different settings 
		but they all go under the broad heading of ``generalized analytic continuation.'' 
		The type of generalized analytic continuation that is relevant to model spaces is called \emph{pseudocontinuation} 
		and it was first explored by H. S.~Shapiro \cite{MR0241614, MR0267080}.				
		In what follows, we let 
		$$\mathbb{D}_e := \big\{ |z|>1\big\} \cup \{\infty\}$$ denote the {\em extended exterior disk}, the complement of the closed unit
		disk in the extended complex plane.
		
		\begin{Definition}
		Let $f$ and $\widetilde{f}$ be meromorphic functions on $\mathbb{D}$ and $\mathbb{D}_e$, respectively. 
		If the nontangential limiting values of $f$ (from $\D$) agree with the nontangential limiting values of $\widetilde{f}$ (from $\D_e$) almost everywhere on $\T$, then we say that $f$ and $\widetilde{f}$ are  \emph{pseudocontinuations} of one another.  
		\end{Definition}

\begin{Remark}
Pseudocontinuations are unique in the sense that if $F$ and $G$ are meromorphic functions on $\D_e$ that are both pesudocontinuations of $f$, then $F = G$. This follows from the Privalov uniqueness theorem \cite[p.~62]{Koosis}, which states that if $f$ and $g$ are meromorphic on $\D$ with equal non-tangential limits on any set of positive Lebesgue measure on $\T$, then $f = g$. This is why the definition of pseudocontinuation is stated in terms of nontangential limits as opposed to radial limits, where Privalov's theorem is no longer true. However, in the context we will apply this definition, all the functions involved will be of {\em bounded type}, that is to say the quotient of two bounded analytic functions (recall Definition \ref{bounded-type}), where the nontangential limits exist almost everywhere. 

By Privalov's uniqueness theorem again, we can show that pseudocontinuation is compatible with analytic continuation in that if $f$ (on $\D$) has a pseudocontinuation $F$ (on $\D_e$) and $f$ also has an analytic continuation $F_1$ across some neighborhood of a point on $\T$, then $F_1 = F$. 
\end{Remark}

\begin{Example}\label{PCBT-ex}
Let us provide a few instructive examples (see \cite{RS} for further details).
\begin{enumerate}\addtolength{\itemsep}{0.25\baselineskip}
\item Any inner function $u$ has a pseudocontinuation to $\D_e$ defined by 
\begin{equation} \label{utilde}
\widetilde{u}(z) := \frac{1}{\,\overline{u(1/\overline{z})}\,}.
\end{equation}
 In fact, this pseucontinuation is one of bounded type (written PCBT), being a quotient of two bounded analytic functions on $\D_e$. Pseudocontinuations of bounded type will play an important role momentarily. 
\item If $f$ is a rational function whose poles lie in $\D_e$, then $f$ is PCBT.
\item Since pseudocontinuations must be compatible with analytic continuations (see the previous remark), functions with isolated branch points on $\T$, such as $\sqrt{1-z}$, do not possess pseudocontinuations. 
\item The function $f(z) = \exp z$ is not pseudocontinuable.   
			Although it is analytically continuable to $\C$, it is not meromorphic on $\mathbb{D}_e$ due to its essential singularity at $\infty$.
\item The classical gap theorems of Hadamard \cite{Hadamard2} and Fabry \cite{Fabry} show that analytic functions on $\D$ with lacunary power series do not have analytic continuations across any point of the unit circle. Such functions do not have pseudocontinuations either \cite{Abakumov-96, MR1458416, MR0267080}.
\end{enumerate}
\end{Example}

The following seminal result demonstrates the direct connection between membership in a model space
		and pseudocontinuability. In what follows, 
		\begin{equation} \label{H2De}
		H^2(\D_e) := \{f(1/z): f \in H^2\}
		\end{equation} denotes the Hardy space of the extended exterior disk.

		\begin{Theorem}[Douglas, Shapiro, Shields \cite{DSS}]\label{DSS-PC}
		A function $f \in H^2$ belongs to $\K_u$ if and only if $f/u$ has a pseudocontinuation $F_u \in H^2(\D_e)$ satisfying $F_u(\infty) = 0$. 
		\end{Theorem}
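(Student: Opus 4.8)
The plan is to characterize membership in $\K_u$ through the boundary-value description from Proposition \ref{Prop:fgzu}, then translate the equation there into a statement about a meromorphic function on $\D_e$. Recall that Proposition \ref{Prop:fgzu} tells us $f \in \K_u$ if and only if $f = \overline{gz}\,u$ almost everywhere on $\T$ for some $g \in H^2$. The key observation is that $f/u = \overline{gz}$ almost everywhere on $\T$ (using $|u| = 1$ a.e.), so I want to manufacture a meromorphic function on $\D_e$ whose nontangential boundary values from outside agree with $\overline{g(\zeta)\zeta} = \overline{\zeta}\,\overline{g(\zeta)}$ almost everywhere.

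First I would set up the reflection. Given $g \in H^2$ with boundary function $g(\zeta) = \sum_{n \geqslant 0} b_n \zeta^n$, consider the candidate exterior function $F_u(z) := \overline{g(1/\overline{z})}/z$ for $|z| > 1$. The point is that as $z \to \zeta \in \T$ nontangentially from $\D_e$, the quantity $1/\overline{z} \to \overline{\zeta}^{-1} = \zeta$ approaches $\T$ from inside $\D$, so $\overline{g(1/\overline{z})} \to \overline{g(\zeta)}$ and $1/z \to \overline{\zeta}$, giving boundary values $\overline{\zeta}\,\overline{g(\zeta)} = \overline{\zeta g(\zeta)}$, which is exactly the boundary function of $f/u$. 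Thus $F_u$ and $f/u$ share nontangential boundary values almost everywhere, so $F_u$ is a pseudocontinuation of $f/u$. I must then verify that $F_u \in H^2(\D_e)$ in the sense of \eqref{H2De}: writing $g(w) = \sum b_n w^n$, one computes $\overline{g(1/\overline{z})} = \sum \overline{b_n} z^{-n}$, so $F_u(z) = \sum_{n\geqslant 0} \overline{b_n}\, z^{-n-1}$, which is precisely an element of $H^2(\D_e)$ with the square-summable coefficients $\{\overline{b_n}\}$; moreover the constant term (the value at $\infty$) is the coefficient of $z^0$, which is absent, so $F_u(\infty) = 0$. This handles the forward direction.

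For the converse, I would run the same reflection backward. Suppose $f \in H^2$ and $f/u$ admits a pseudocontinuation $F_u \in H^2(\D_e)$ with $F_u(\infty) = 0$. Then $F_u(z) = \sum_{n \geqslant 1} c_n z^{-n}$ with $\sum |c_n|^2 < \infty$. Reflecting, define $g$ on $\D$ by $g(w) := \overline{F_u(1/\overline{w})}\cdot(1/\overline{w})^{\text{?}}$ — more cleanly, read off $g(\zeta) := \overline{\zeta}\,\overline{F_u(\zeta)}$ on $\T$ and check this is the boundary function of an $H^2$ function by examining Fourier coefficients. The condition $F_u(\infty) = 0$ is exactly what forces $g$ to have no negative Fourier coefficients, i.e., $g \in H^2$ rather than merely $L^2$. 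Having produced $g \in H^2$ with $f/u = \overline{\zeta g(\zeta)}$ a.e.\ on $\T$, equivalently $f = \overline{gz}\,u$ a.e., Proposition \ref{Prop:fgzu} yields $f \in \K_u$.

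**The main obstacle** I anticipate is not the algebra of the reflection but the analytic bookkeeping that makes the reflection rigorous: one must justify that the boundary values of $F_u$ approached nontangentially from $\D_e$ genuinely coincide, almost everywhere, with the radial/nontangential boundary function of the $H^2$ element $g$ reflected through $\T$, and that the correspondence $g \in H^2 \leftrightarrow F_u \in H^2(\D_e)$ is a clean isometric bijection respecting the vanishing-at-$\infty$ condition. The cleanest route is to argue entirely at the level of Fourier coefficients: the map $\sum_{n\geqslant 0} b_n \zeta^n \mapsto \sum_{n \geqslant 0} \overline{b_n}\,\zeta^{-n-1}$ sends $H^2$ boundary functions to $\overline{\zeta H^2}$ boundary functions, these are precisely the boundary functions of $H^2(\D_e)$ elements vanishing at $\infty$, and both $f/u$ (from Proposition \ref{Prop:fgzu}) and $F_u$ are realized as the \emph{same} element of $\overline{zH^2} \subset L^2$. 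Phrasing everything through the orthogonal decomposition $L^2 = H^2 \oplus \overline{zH^2}$ converts the pseudocontinuation condition into the single assertion $\overline{u}f \in \overline{zH^2}$, which is exactly the orthogonality $f \perp uH^2$ established in the proof of Proposition \ref{Prop:fgzu}, so the theorem becomes a reinterpretation of that orthogonality in the language of exterior Hardy spaces.
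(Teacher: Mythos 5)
Your proposal is correct and follows essentially the same route as the paper: both directions use Proposition \ref{Prop:fgzu} together with the reflection $F_u(z) = \tfrac{1}{z}\overline{g(1/\overline{z})}$ (and its inverse), which is exactly the paper's argument. Your extra Fourier-coefficient bookkeeping and the closing remark that the theorem is a reinterpretation of $\overline{u}f \in \overline{zH^2}$ are just more explicit versions of what the paper leaves implicit in its appeal to \eqref{H2De}.
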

		
		\begin{proof}
		Suppose that $f$ belongs to $\K_u$. By Proposition \ref{Prop:fgzu}, $f =  \overline{g\zeta}u$ almost everywhere on $\T$. Define the function $F_u$ on $\D_e$ by 
		\begin{equation}\label{PC-H-D}
		F_{u}(z) := \frac{1}{z} \overline{g\Big(\frac{1}{\,\overline{z}\,}\Big)}
		\end{equation} and note by \eqref{H2De} that $F_u$ is in $H^2(\D_e)$ and $F_{u}(\infty) = 0$. Moreover, by the identity $f =  \overline{g\zeta}u$ almost everywhere on $\T$ we see that $F_u(\zeta) = f(\zeta)/u(\zeta)$ for almost every $\zeta \in \T$, i.e., $F_u$ is a pseudocontinuation of $f/u$. This proves one direction. 
		
		For the other direction, suppose that $f/u$ has a pseudocontinuation $F_u \in H^2(\D_e)$ with $F_{u}(\infty) = 0$. Then, again, by \eqref{H2De} we have 
		$$F_{u}(z) = \frac{1}{z} h\Big(\frac{1}{z}\Big)$$
		for some $h \in H^2$ and 
		$f(\zeta)/u(\zeta) = F_{u}(\zeta)$ for almost every $\zeta$ in $\T$. Define a function $g$ on $\D$ by
		$g(z) := \overline{h(\overline{z})}$ and observe that $g \in H^2$. From here we see that $f(\zeta) = u(\zeta) \overline{\zeta g(\zeta)}$ for almost every $\zeta$ in $\T$ from which it follows, via Proposition \ref{Prop:fgzu}, that $f$ belongs to $\K_u$. 
		\end{proof}
		
With a little more work, one can also determine the {\em cyclic vectors} for $S^{*}$, those $f$ in $H^2$ for which 
$$\bigvee\{S^{* n} f: n = 0, 1, 2, \cdots\} = H^2.$$

\begin{Corollary}
A function $f$ in $H^2$ is not cyclic for $S^{*}$ if and only if $f$ is $PCBT$. 
\end{Corollary}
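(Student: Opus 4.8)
The plan is to deduce this directly from the Douglas--Shapiro--Shields theorem (Theorem \ref{DSS-PC}). First I would reduce the statement to a membership question. By Corollary \ref{cyclic-vec-S-star} the proper $S^*$-invariant subspaces are exactly the model spaces, so $f$ fails to be cyclic for $S^*$ precisely when $f \in \K_u$ for some nonconstant inner function $u$; Theorem \ref{DSS-PC} then says this happens if and only if $f/u$ admits a pseudocontinuation $F_u \in H^2(\D_e)$ with $F_u(\infty)=0$. The whole task is therefore to convert between ``$f$ is PCBT'' and the existence of such a pair $(u,F_u)$.

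For the forward implication I would assume $f$ is noncyclic, so $f\in\K_u$ and $f/u$ has the pseudocontinuation $F_u\in H^2(\D_e)\subset N(\D_e)$ furnished by Theorem \ref{DSS-PC}. Since $u$ is inner, Example \ref{PCBT-ex}(1) supplies a pseudocontinuation $\widetilde u$ of $u$ to $\D_e$ that is of bounded type. On $\T$ the boundary values multiply, $\widetilde u\,F_u = u\cdot(f/u)=f$ almost everywhere, so $\widetilde u\,F_u$ is a pseudocontinuation of $f$; being a product of two functions of bounded type on $\D_e$ it is itself of bounded type, whence $f$ is PCBT.

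The reverse implication is the substantive direction. Here I would assume $f$ (which we may take nonzero) has a pseudocontinuation $\widetilde f$ to $\D_e$ of bounded type, and reflect it into $\D$ by setting $h(z):=\overline{\widetilde f(1/\overline z)}$. Then $h$ is of bounded type on $\D$ with boundary values $h(\zeta)=\overline{f(\zeta)}$ a.e.\ on $\T$. Writing $h=a/b$ with $a,b\in H^\infty$ and factoring $b=\theta\,O$ into its inner and outer parts, the quotient $\psi:=a/O$ lies in $N^+$ by Definition \ref{bounded-type}, so $h=\psi/\theta$ and $f=\theta\overline\psi$ on $\T$. Consequently $|\psi|=|f|\in L^2(\T)$, and since $\psi\in N^+$ has an $L^2$ boundary function, Remark \ref{N-facts} upgrades it to $\psi\in H^2$.

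It remains to exhibit the DSS data, and the key trick is a twist by $z$ to force vanishing at infinity: take $u:=z\theta$, a nonconstant inner function, and compute $f/u=\overline{z\psi}$ on $\T$, where $z\psi\in H^2$ vanishes at the origin. Consequently $F_u(z):=\overline{(1/\overline z)\,\psi(1/\overline z)}$ lies in $H^2(\D_e)$, satisfies $F_u(\infty)=0$, and agrees with $f/u$ on $\T$, so it is the required pseudocontinuation; Theorem \ref{DSS-PC} then places $f$ in $\K_{z\theta}$, making it noncyclic. I expect the main obstacle to be exactly this reverse passage: one must manufacture, from a merely bounded-type pseudocontinuation, an inner function and a pseudocontinuation of $f/u$ of the precise shape demanded by Theorem \ref{DSS-PC}. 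The reflection identity, the improvement $N^+\cap L^2=H^2$, and the $z$-twist ensuring $F_u(\infty)=0$ are the three points that make this conversion go through.
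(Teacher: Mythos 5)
Your forward implication (noncyclic $\Rightarrow$ PCBT) is exactly the paper's argument: pass to $\K_u$, invoke Theorem \ref{DSS-PC} to get $F_u \in H^2(\D_e)$, and multiply by the canonical bounded-type pseudocontinuation $\widetilde{u}$ of $u$. The difference is that the paper explicitly stops there (``we will prove only one direction''), whereas you also supply the converse, and your argument for it is correct. The reflection $h(z)=\overline{\widetilde f(1/\overline z)}$ does carry a bounded-type function on $\D_e$ to one on $\D$ with boundary values $\overline{f}$; writing $h=a/b$, $b=\theta O$, gives $\psi=a/O\in N^+$ with $|\psi|=|f|\in L^2(\T)$, and Remark \ref{N-facts} upgrades $\psi$ to $H^2$; and the twist $u=z\theta$ correctly forces $F_u(\infty)=0$ so that Theorem \ref{DSS-PC} applies. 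Two small observations: once you have $f=\theta\overline{\psi}=u\,\overline{z\psi}\cdot z\cdot\overline{z}$, i.e.\ $f=\overline{\psi z}\,u$ on $\T$ with $\psi\in H^2$, you could conclude $f\in\K_{z\theta}$ directly from Proposition \ref{Prop:fgzu} (or from Proposition \ref{Prop:Generator}) without re-routing through the pseudocontinuation theorem, which shortens the last step; and it is worth a sentence to note that nontangential approach regions in $\D$ at $\zeta\in\T$ correspond under $z\mapsto 1/\overline z$ to nontangential regions in $\D_e$, which is what justifies reading off the boundary values of $h$ and of $F_u$ from those of $\widetilde f$ and $\psi$. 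With those minor polish points, the proof stands and gives strictly more than the paper's.
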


\begin{proof}
We will prove only one direction. Indeed, suppose $f$ is non-cyclic for $S^{*}$. Then $f$ must belong to some model space $\K_u$. By the previous theorem, $f/u$ has a pseudocontinuation $F_u$ that belongs to $H^2(\D_e)$. But since $u$ already has a natural pseudocontinuation $\widetilde{u}$ given by \eqref{utilde}, we see that $f$ has a pseudocontinuation $\widetilde{u} F_u$ that is of bounded type\footnote{We are using the well-known fact here that $F_u$ (or any function in $H^2(\D_e)$) is of bounded type (see Remark \ref{N-facts}).}. 
\end{proof}

Along with Example \ref{PCBT-ex} this corollary shows that inner functions and rational function are noncyclic for $S^{*}$ whereas functions like $\sqrt{1 - z}$ and $e^z$ are cyclic. 
		
		In principle the cyclic vector problem for $S^{*}$ is solved (i.e., $f$ is noncyclic for $S^{*}$ if and only if $f$ is PCBT). 
		However, this solution is not as explicit as the solution to the cyclic vector problem for $S$ (i.e., $f$ is cyclic for $S$ if and only $f$ is outer). 
		Outer functions are, in a sense, readily identifiable.  On the other hand, PCBT functions are not so easily recognized. 
		We refer the reader to some papers which partially characterize the noncyclic vectors 
		for $S^{*}$ by means of growth of Taylor coefficients \cite{Kriete-1970, Shapiro-64}, 
		the modulus of the function \cite{Kriete-1970, DSS}, and gaps in the Taylor coefficients \cite{Abakumov-96, MR1458416, DSS}.

		\subsection{Analytic continuation}
		Recall from \eqref{eq:SpecDef} and Theorem \ref{TheoremSpectrum} that
		if $u = B_{\Lambda} S_{\mu}$, where $B_{\Lambda}$ is a Blaschke product with zero set
		$\Lambda$ and $S_{\mu}$ is a singular inner function with corresponding
		singular measure $\mu$, then the \emph{spectrum} of $u$ is the set
		\begin{equation*}
			\sigma(u) =\Lambda^{-} \cup \operatorname{supp} \mu .
		\end{equation*}
		The relevance of $\sigma(u)$ to the function theoretic properties of $\K_u$
		lies in the following observation \cite[p.~65]{N1}, \cite[p.~84]{CR}.
				
		\begin{Proposition} \label{P-analytic-continuation}
			Every $f$ in $\K_u$ can be analytically continued to 
			\begin{equation*}
				\widehat{\C} \backslash \{ 1/\overline{z} : z\in \sigma(u)\},
			\end{equation*}
			where $\widehat{\C} = \C \cup \{\infty\}$ denotes the extended complex plane.		
		\end{Proposition}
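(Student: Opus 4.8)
The plan is to continue $f$ separately across the circle and into the exterior disk, then glue. Since $f$ is already holomorphic on $\D$, it suffices to produce an analytic continuation across each point $\zeta_0 \in \T\setminus\sigma(u)$ and to control the continuation on $\D_e$; the singular set will turn out to be exactly $\{1/\overline{z} : z\in\sigma(u)\}$. First I would record the one nontrivial geometric input: an inner function $u$ continues analytically across every point of $\T\setminus\sigma(u)$, the continuation being the reflection $\widetilde{u}$ of \eqref{utilde}, and this continuation is nonvanishing there because $|u|=1$ on $\T$. This is the meaning of the spectrum: by Theorem \ref{TheoremSpectrum}, $\sigma(u)=\Lambda^-\cup\operatorname{supp}\mu$ collects exactly the boundary points near which $u$ fails to be bounded away from $0$, and away from these points the Blaschke and singular factors extend holomorphically. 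On $\D_e$ the reflection $\widetilde{u}$ is then holomorphic except for poles at $\{1/\overline{\lambda}:\lambda\in\Lambda\}$ and the singularities lying over $\operatorname{supp}\mu$, i.e.\ precisely on $\{1/\overline{z}:z\in\sigma(u)\}$.

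The mechanism for the continuation across the circle is the integral form of the reproducing formula. Fix $\zeta_0\in\T\setminus\sigma(u)$ and a neighborhood $\Omega$ on which $u$ is holomorphic and nonvanishing. For $f\in\K_u$ we have $P_uf=f$, so by Proposition \ref{Prop:Pu} together with \eqref{eq:Kernel},
\[
f(\lambda)=\int_{\T} f(\zeta)\,\frac{1-u(\lambda)\overline{u(\zeta)}}{1-\lambda\overline{\zeta}}\,dm(\zeta),\qquad \lambda\in\D.
\]
I would define $\Phi(\lambda)$ by the same integral but now for all $\lambda\in\Omega$, reading $u(\lambda)$ as the analytic continuation. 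The claim is that $\Phi$ is holomorphic on all of $\Omega$ and agrees with $f$ on $\Omega\cap\D$, and hence is the sought continuation across $\zeta_0$. Finally, patching these local continuations with the exterior function $\widetilde{u}\,F_u$ furnished by the Douglas--Shapiro--Shields theorem (Theorem \ref{DSS-PC}, where $F_u\in H^2(\D_e)$ is the pseudocontinuation of $f/u$, so that $\widetilde{u}F_u$ has boundary values $u\cdot(f/u)=f$ and is holomorphic on $\D_e\setminus\{1/\overline{z}:z\in\sigma(u)\}$) yields a single holomorphic function on $\widehat{\C}\setminus\{1/\overline{z}:z\in\sigma(u)\}$.

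The hard part is showing $\Phi$ is holomorphic across the arc $\Omega\cap\T$; this is exactly the point where a mere pseudocontinuation must be upgraded to a genuine analytic continuation. For fixed $\zeta\in\T$ the integrand $\lambda\mapsto \bigl(1-u(\lambda)\overline{u(\zeta)}\bigr)/(1-\lambda\overline{\zeta})$ appears to have a pole at $\lambda=\zeta$, but the numerator vanishes there because $u(\zeta)\overline{u(\zeta)}=|u(\zeta)|^2=1$; thus the singularity is removable and the integrand is holomorphic in $\lambda$ on all of $\Omega$. The unimodularity of $u$ on $\T$ that cancels this Cauchy singularity is precisely what is unavailable at points of $\sigma(u)$, which is what forces the exceptional set. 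It then remains to justify differentiating under the integral: using that $u$ is Lipschitz on $\Omega$ one checks $|1-u(\lambda)\overline{u(\zeta)}|\lesssim|\lambda-\zeta|$ while $|1-\lambda\overline{\zeta}|\asymp|\lambda-\zeta|$ for $\lambda,\zeta$ on $\T$, so the kernel stays bounded on compact subsets of $\Omega$ uniformly in $\zeta$, and Morera's theorem together with Fubini gives holomorphy of $\Phi$.

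The routine verifications I would omit are the uniqueness-of-continuation argument identifying the exterior values of $\Phi$ with $\widetilde{u}F_u$ (both are holomorphic near $\T\setminus\sigma(u)$ and share the boundary values of $f$), and the bookkeeping confirming that the set on which $\widetilde{u}$ fails to be holomorphic is exactly the reflected spectrum $\{1/\overline{z}:z\in\sigma(u)\}$. I would also only sketch the continuation-of-$u$ input, since the analytic extendibility of an inner function across the complement of its spectrum is standard and follows factor-by-factor from the canonical factorization.
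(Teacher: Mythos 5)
Your argument is correct, but the engine you use to cross the circle is genuinely different from the paper's. The paper works with $f/u$ rather than with $f$: it forms the two-sided function equal to $f/u$ on $\D$ and to the pseudocontinuation $F_u$ on $\D_e$, and proves holomorphy across an arc $J\subset\T\setminus\sigma(u)$ by Morera's theorem, the vanishing of triangle integrals being extracted from the uniform boundedness of the integral means of $f/u$ over $rJ$ and of $F_u$ over $sJ$ as $r\to1^-$ and $s\to1^+$; multiplying by the continuation of $u$ then continues $f$. You instead continue $f$ itself through the reproducing-kernel identity $f(\lambda)=\int_{\T} f(\zeta)\,\frac{1-u(\lambda)\overline{u(\zeta)}}{1-\lambda\overline{\zeta}}\,dm(\zeta)$, reading $u(\lambda)$ as its analytic continuation on $\Omega$ and using $|u|\equiv 1$ on $\T\cap\Omega$ to cancel the Cauchy singularity at $\lambda=\zeta$ (note that $1-\lambda\overline{\zeta}=\overline{\zeta}(\zeta-\lambda)$ for $\zeta\in\T$, so your ``$\asymp$'' is in fact an equality, and the Fubini--Morera step needs only that the kernel be bounded on $K\times\T$ for compact $K\subset\Omega$). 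This buys you a more elementary crossing step --- no integral-means or contour-cutting estimates --- but it costs an extra identification at the end: your local continuations $\Phi$ must be shown to agree with the global exterior function $\widetilde{u}F_u$ on $\Omega\cap\D_e$, and since the two functions live on opposite sides of $\T$ this is not the identity theorem but a genuine (local) Privalov-type boundary uniqueness argument, applied to two functions of bounded type sharing the nontangential boundary values $f$ a.e.\ on the arc; the paper's construction gets that identification for free because it continues the glued function in one stroke. Both routes rest on the same external inputs, namely the analytic continuability of $u$ across $\T\setminus\sigma(u)$ and Theorem \ref{DSS-PC} for the exterior piece, and your accounting of the exceptional set (poles of $\widetilde{u}$ at the reflected zeros, with the points over $\Lambda^-\cap\T$ and $\operatorname{supp}\mu$ excluded as boundary obstructions rather than interior singularities) is sound.
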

		
		\begin{proof}
		A sketch of the proof goes as follows.   By \cite[Sect.~2.6]{Garnett} $u$ has an analytic continuation across $\T \setminus \sigma(u)$. By Theorem \ref{DSS-PC}, $f/u$ has a pseudocontinuation $F_u$ in $H^2(\D_e)$ for any $f \in \K_u$. 
		Let $J$ be an open arc contained in $\T \setminus \sigma(u)$. We know that $u$ has an analytic continuation across $J$. We also know that the integral means 
		$$\int_{J} \Big|\frac{f}{u}(r \zeta)\Big| dm(\zeta)$$
and 
$$\int_{J} |F_u(s \zeta)| dm(\zeta)$$ are uniformly bounded in $0 < r < 1$ and $s > 1$. One can now take a triangle $\Delta$ cut by $J$ and show (by cutting $\Delta$ with $(1 - \epsilon) J$ and $(1 + \epsilon)J$ and using the boundedness of the above integral means) that the contour integral of the function defined by $f/u$ on $\D$ and $F_u$ on $\D_e$ integrates to zero on $\Delta$. Now apply Morera's theorem\footnote{There is a precise version of this type of Morera's theorem found in \cite{Garnett}.}. 
		\end{proof}
		
		\subsection{Nontangential limits}\label{Subsection:Limits}
		Functions in $H^2$ possess nontangential limits almost everywhere on $\T$. However, one can easily see that for each fixed $\zeta$ in $\T$ there exists an $H^2$ function 
		that does not have a finite nontangential limit at $\zeta$ (e.g., $f(z) = \log(\zeta - z)$). 
		In sharp contrast to this, for a given model space $\K_u$, there might exist
		 points $\zeta$ in $\T$ such that each function in $\K_u$ possesses a nontangential limit at $\zeta$. We begin with a definition. 
		
		\begin{Definition}
			If $u$ is inner and $\zeta \in \T$, then $u$ has an
			\emph{angular derivative in the sense of Carath\'eodory} (ADC)
			\emph{at $\zeta$} if the nontangential limits of $u$ and $u'$ exist at $\zeta$ and  $|u(\zeta)| = 1$.
		\end{Definition}
			
		A number of equivalent conditions for the existence of an angular derivative (called the Julia-Carath\'eodory theorem) can be found in \cite[p.~57]{JShap}.
		The precise relationship between angular derivatives and model spaces is contained in the following
		important result.
		
		\begin{Theorem}[Ahern-Clark \cite{AC70, AC70a}]\label{TheoremAC}
			For an inner function $u = B_{\Lambda} S_{\mu}$, where $B_{\Lambda}$
			is a Blaschke product with zeros $\Lambda = \{\lambda_n\}_{n=1}^{\infty}$, repeated
			according to multiplicity,  $S_{\mu}$ is a singular inner function
			with corresponding singular measure $\mu$,  and $\zeta \in \T$, the following are equivalent:
			\begin{enumerate}\addtolength{\itemsep}{0.75\baselineskip}
				\item Every $f \in \mathcal{K}_{u}$ has a nontangential limit at $\zeta$.
				\item For every $f \in \mathcal{K}_u$, $f(\lambda)$ is bounded as $\lambda \to \zeta$ nontangentially.
				\item $u$ has an ADC at $\zeta$.
				\item $\displaystyle k_{\zeta}(z) = \frac{1 - \overline{u(\zeta)} u(z)}{1 - \overline{\zeta} z} \in H^2,$
				\item $\displaystyle
						\sum_{n \geqslant 1} \frac{1 - |\lambda_{n}|^2}{|\zeta - \lambda_{n}|^2} +
						\int_{\T} \frac{d \mu(\xi)}{|\xi - \zeta|^2} < \infty.$
			\end{enumerate}
		\end{Theorem}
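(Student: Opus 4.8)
The plan is to treat condition (4), the membership $k_{\zeta} \in H^2$ of the formal boundary kernel, as the central hub, and to split the proof into a function-theoretic core $(3)\Leftrightarrow(4)\Leftrightarrow(5)$, which carries the real content, and a softer functional-analytic loop $(4)\Rightarrow(1)\Rightarrow(2)\Rightarrow(4)$. I would establish the core first so that the existence of a genuine nontangential limit $u(\zeta)$ with $|u(\zeta)|=1$ is available when I close the loop.

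The engine for the core is the identity $\norm{k_{\lambda}}^2 = k_{\lambda}(\lambda) = (1 - |u(\lambda)|^2)/(1-|\lambda|^2)$ for $\lambda \in \D$, recorded just before the theorem. For $(3)\Leftrightarrow(4)$, I would invoke the Julia--Carath\'eodory theorem \cite[p.~57]{JShap}: $u$ has an ADC at $\zeta$ if and only if $\liminf_{\lambda\to\zeta}\frac{1-|u(\lambda)|}{1-|\lambda|}<\infty$, and in that case the nontangential limit $\lim_{\lambda\to\zeta}\frac{1-|u(\lambda)|^2}{1-|\lambda|^2} = |u'(\zeta)|$ exists. Granting the ADC, I would show $k_{\lambda}\to k_{\zeta}$ in $H^2$ as $\lambda\to\zeta$ nontangentially, since the norms converge to $|u'(\zeta)|$ and the functions converge pointwise; hence $k_{\zeta}\in H^2$ with $\norm{k_{\zeta}}^2 = |u'(\zeta)|$. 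Conversely, if $k_{\zeta}\in H^2$, then $\inner{k_{\lambda},k_{\zeta}} = k_{\zeta}(\lambda)$ together with lower semicontinuity of the norm bounds the relevant $\liminf$, recovering the ADC. For $(3)\Leftrightarrow(5)$, and the exact value of the angular derivative, I would use the canonical factorization $u = B_{\Lambda}S_{\mu}$: differentiating $\log u$ gives $\tfrac{u'}{u} = \sum_n \tfrac{b_{\lambda_n}'}{b_{\lambda_n}} + \tfrac{S_{\mu}'}{S_{\mu}}$, and on $\T$ one has the boundary formulas $|b_{\lambda_n}'(\zeta)| = \frac{1-|\lambda_n|^2}{|\zeta - \lambda_n|^2}$ together with a singular contribution comparable to $\int_{\T}\frac{d\mu(\xi)}{|\xi-\zeta|^2}$. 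Summing, $u$ has a finite angular derivative at $\zeta$ exactly when the series and integral in (5) converge, and in fact $|u'(\zeta)|$ equals that sum up to a harmless factor on the singular term.

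To close the loop, $(1)\Rightarrow(2)$ is immediate, since a finite nontangential limit forces nontangential boundedness. For $(2)\Rightarrow(4)$ I would fix a Stolz region $\{\lambda\in\D : |\lambda-\zeta|<\alpha(1-|\lambda|)\}$ and use $f(\lambda)=\inner{f,k_{\lambda}}$: hypothesis (2) says the evaluation functionals $f\mapsto\inner{f,k_{\lambda}}$ on the Hilbert space $\K_u$ are pointwise bounded, so Banach--Steinhaus gives $\sup_{\lambda}\norm{k_{\lambda}}<\infty$ on the region; via $\norm{k_{\lambda}}^2=(1-|u(\lambda)|^2)/(1-|\lambda|^2)$ this bounds $\frac{1-|u(\lambda)|}{1-|\lambda|}$, and Julia--Carath\'eodory then delivers the ADC, hence (4) by the core. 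For $(4)\Rightarrow(1)$, once $k_{\zeta}\in\K_u$ I would argue that $k_{\lambda}\to k_{\zeta}$ weakly as $\lambda\to\zeta$ nontangentially (norm-boundedness plus pointwise convergence, the latter using $u(\lambda)\to u(\zeta)$ guaranteed by the ADC), so that $f(\lambda)=\inner{f,k_{\lambda}}\to\inner{f,k_{\zeta}}$ for every $f\in\K_u$.

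The hard part will be the passage in $(3)\Leftrightarrow(5)$ from mere finiteness of the series to the \emph{honest existence} of the nontangential limits of $u$ and $u'$ (rather than a one-sided $\liminf$ bound), and the justification of interchanging the limit $\lambda\to\zeta$ with the infinite sum over the Blaschke zeros and the integration against $\mu$. What makes this legitimate is the positivity of each term $\frac{1-|\lambda_n|^2}{|\zeta-\lambda_n|^2}$ and of the Poisson-type singular kernel, which licenses a monotone-convergence argument and upgrades the $\liminf$ to a genuine limit; this positivity is the quantitative heart of the Ahern--Clark theorem and the step I expect to demand the most care.
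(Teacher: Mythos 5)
The survey states this theorem without proof, citing the original Ahern--Clark papers \cite{AC70, AC70a}; the only in-text material adjacent to it is the remark before Section \ref{Section:BB} that boundedness of $\norm{k_{\lambda}}^2=(1-|u(\lambda)|^2)/(1-|\lambda|^2)$ along a path forces boundedness of every $f\in\K_u$ along that path. So there is no argument in the paper to compare against, and your proposal must be judged as a reconstruction of the standard proof --- which, in its architecture, it is: the hub at (iv), the loop $(iv)\Rightarrow(i)\Rightarrow(ii)\Rightarrow(iv)$ via weak convergence of $k_{\lambda}$ and Banach--Steinhaus, and the Julia--Carath\'eodory characterization of the ADC through the quotient $(1-|u(\lambda)|)/(1-|\lambda|)$ are exactly the classical route. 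Two places in your sketch deserve tightening. First, in $(iii)\Rightarrow(iv)$ you should not assert norm convergence $k_{\lambda}\to k_{\zeta}$ directly from ``norms converge and the functions converge pointwise''; the clean order is: norm-boundedness of $\{k_{\lambda}\}$ on a Stolz region gives a weakly convergent subsequence, pointwise convergence (using $u(\lambda)\to u(\zeta)$ from the ADC) identifies the weak limit as $k_{\zeta}$, which already yields $k_{\zeta}\in H^2$; norm convergence is a bonus obtained afterwards from $\norm{k_{\zeta}}^2=|u'(\zeta)|$. Similarly, in $(iv)\Rightarrow(iii)$ the operative tool is Cauchy--Schwarz applied to $\inner{k_{\lambda},k_{\zeta}}=\overline{k_{\zeta}(\lambda)}$, which gives $(1-|u(r\zeta)|)/(1-r)\leqslant 2\norm{k_{\zeta}}^2$, rather than ``lower semicontinuity of the norm.'' Second, your route to $(iii)\Leftrightarrow(v)$ by termwise differentiation of $\log u$ is more delicate than the standard one: justifying the boundary evaluation of the differentiated Blaschke series is itself nontrivial. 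The usual argument instead works with $\bigl(1-|u(z)|^2\bigr)/\bigl(1-|z|^2\bigr)$ and the elementary inequalities $1-\prod a_n\leqslant\sum(1-a_n)$ and $1-\prod a_n\geqslant 1-a_k$ for $a_n\in[0,1]$, together with $1-|b_{\lambda_n}(z)|^2=(1-|\lambda_n|^2)(1-|z|^2)/|1-\overline{\lambda_n}z|^2$ and the analogous two-sided estimate for the singular factor; Fatou's lemma and the positivity you correctly identify then let one pass the radial limit through the sum and the integral. With those repairs your outline is a faithful account of the known proof; none of the issues is a conceptual gap.
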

			
		It is worth comparing condition (v) of the preceding theorem to an old result of Frostman \cite{Frostman}
		that says that an inner function $u$ (and all its divisors) has a non-tangential limits of modulus one at $\zeta$ in $\T$
		whenever
		\begin{equation*}
			\sum_{n \geqslant 1} \frac{1 - |\lambda_{n}|}{|\zeta - \lambda_{n}|} +
			\int_{\T} \frac{d \mu(\xi)}{|\xi - \zeta|} < \infty.
		\end{equation*}
		We should also mention that there are inner functions (in fact Blaschke products) 
		$u$ for which $|u'(\zeta)| = \infty$ at every point of $\T$ \cite{Frostman}. 
		
		There are also results about \emph{tangential} limits of functions from model spaces \cite{Berman, Cargo}. 
		When condition (v) fails then there are functions in $\K_u$ that do not have nontangential limits at $\zeta$. 
		In particular, there are functions $f$ in $\K_u$ for which $|f(r \zeta)|$ is unbounded as $r \to 1^{-}$. However, in \cite{Hartmann-Ross} there are results that give bounds (both upper and lower) on the growth of $|f(r \zeta)|$ as a function of $r$. 		
				
\section{Conjugation}\label{SectionConjugation}		
	Each model space $\K_u$ comes equipped with a conjugation, a certain type of conjugate-linear operator 
	that generalizes complex conjugation on $\C$.  Not only does this conjugation cast a new light
	on pseudocontinuations, it also interacts with a number of important linear operators that act upon model spaces.  Most of the material in
	this section can be found in \cite{CCO,CBSTK}.

\subsection{Basic properties}
	We say that a conjugate-linear function $C:\h\to\h$ on a complex Hilbert space $\h$ is a \emph{conjugation}
	if $C$ is isometric (i.e., $\norm{Cx} = \norm{x}$ for all $x$ in $\h$) and involutive (i.e., $C^2 = I$).
	In light of the polarization identity, it turns out that the isometric condition is equivalent to the assertion that
	$\inner{ Cx,Cy}=\inner{y,x}$ for all $x, y$ in $\h$.

	The structure of conjugations is remarkably simple.  If $C$ is a conjugation on $\h$, then there exists
	an orthonormal basis $\{ e_n\}$ of $\h$ such that $Ce_n = e_n$ for all $n$  \cite[Lem.~1]{G-P}.  We refer to such a basis as
	a \emph{$C$-real} orthonormal basis of $\h$.  As a consequence, any conjugation is unitarily equivalent to  
	the \emph{canonical conjugation}
	\begin{equation*}
		(z_1,z_2,\ldots) \mapsto  (\overline{z_1},\overline{z_2},\ldots)
	\end{equation*}
	on an $\ell^2$-space of the appropriate dimension.
	
	In order to understand the natural conjugation on $\K_u$, we must first remind the reader that $\K_u = H^2 \cap u \overline{z H^2}$,
	as a space of functions on $\T$ (see Proposition \ref{Prop:fgzu}).
	
	\begin{Proposition}\label{Prop:C}
		The conjugate-linear operator $C:\K_u\to\K_u$, defined in terms of boundary functions on $\T$
		by $Cf = \overline{fz}u$, is a conjugation.  
		In particular, $|f| = |Cf|$ almost everywhere on $\T$ so that $f$ and $Cf$ share the same outer factor.
	\end{Proposition}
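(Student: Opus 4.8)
The plan is to verify the four defining features of a conjugation — that $C$ maps $\K_u$ into itself, and that it is conjugate-linear, isometric, and involutive — after which the ``in particular'' clause will follow at once. Throughout I would work with boundary functions on $\T$, where the inner function satisfies $|u(\zeta)| = 1$ and $|\zeta| = 1$ for almost every $\zeta$; these two unimodularity facts are what make every computation collapse.

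The hard part will be well-definedness: the formula $Cf = \overline{fz}u$ involves a complex conjugation, so it is not at all obvious that $Cf$ is again (the boundary function of) an $H^2$ function, let alone one lying in $\K_u$. Here I would lean on the description $\K_u = H^2 \cap u\overline{zH^2}$ from Proposition \ref{Prop:fgzu}. Given $f$ in $\K_u$, one membership is immediate: writing $Cf = u\,\overline{zf}$ and using $f \in H^2$ shows $Cf \in u\overline{zH^2}$. For the membership $Cf \in H^2$ I would invoke the other half of the intersection, writing $f = u\,\overline{zh}$ with $h \in H^2$; then a one-line computation using $z\overline{z} = 1$ and $u\overline{u} = 1$ on $\T$ gives $Cf = \overline{fz}\,u = (\overline{u}h)u = h \in H^2$. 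Hence $Cf \in H^2 \cap u\overline{zH^2} = \K_u$, so $C$ does indeed map $\K_u$ into $\K_u$.

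With well-definedness in hand, the remaining properties are routine. Conjugate-linearity is immediate from the conjugation appearing in the formula. For the isometry I would compute $\norm{Cf}^2 = \int_{\T} |\overline{fz}\,u|^2\,dm = \int_{\T} |f|^2\,dm = \norm{f}^2$, again using $|z| = |u| = 1$ almost everywhere; note that this computation is pointwise, so it simultaneously delivers $|Cf| = |f|$ a.e.\ on $\T$. For involutivity, applying $C$ twice and cancelling the unimodular factors yields $C^2 f = \overline{(Cf)z}\,u = f\,\overline{u}u = f$, so $C^2 = I$.

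Finally, the ``in particular'' statement is a direct consequence of the isometry computation: the pointwise identity $|f| = |Cf|$ on $\T$, combined with the fact recorded in the discussion of outer functions — that an outer function is determined up to a unimodular constant by its modulus on $\T$ — forces $f$ and $Cf$ to have the same outer factor.
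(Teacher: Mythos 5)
Your proof is correct. The only point of divergence from the paper is how membership $Cf \in \K_u$ is established: the paper verifies it by two direct orthogonality computations, showing $\inner{Cf,\overline{zh}} = \inner{uh,f} = 0$ (so $Cf \in H^2$, using the decomposition $L^2 = H^2 \oplus \overline{zH^2}$) and $\inner{Cf,uh} = \inner{\overline{fz},h} = 0$ (so $Cf \perp uH^2$), whereas you invoke the boundary-value characterization $\K_u = H^2 \cap u\overline{zH^2}$ of Proposition \ref{Prop:fgzu} and check both memberships by pointwise algebra on $\T$. Since Proposition \ref{Prop:fgzu} was itself proved by exactly the paper's orthogonality argument, the two routes are logically equivalent, but yours has a small bonus: it identifies $Cf$ explicitly as the function $h$ appearing in the representation $f = u\,\overline{zh}$, which makes transparent the remark in the paper's footnote to Proposition \ref{Prop:fgzu} that the ``companion'' function $g$ in $f = \overline{gz}u$ lies in $\K_u$ --- indeed $g = Cf$. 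The paper's version is marginally more self-contained; yours is marginally slicker given the lemma already in hand. The remaining verifications (conjugate-linearity, the pointwise isometry $|Cf| = |f|$ a.e.\ yielding both the norm identity and the outer-factor claim, and involutivity via $z\overline{z} = u\overline{u} = 1$) match the paper's one-line treatment.
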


	\begin{proof}
		Since $|u|=1$ almost everywhere on $\T$, it follows that $C$ is conjugate-linear, isometric, and involutive.
		We need only prove that $C$ maps $\K_{u}$ into $\K_{u}$. Since $f$ is orthogonal to $uH^2$,
		it follows that
		\begin{align*}
			 \inner{Cf,\overline{zh}}  
			 &= \inner{ \overline{fz}u, \overline{zh} } \\
			 &= \int_{\T} \overline{f(\zeta)\zeta}u(\zeta) \zeta h(\zeta)\,dm(\zeta) \\
			 &= \int_{\T} u(\zeta) h(\zeta)\overline{f(\zeta)}\,dm(\zeta) \\
			&= \inner{u h ,f}  \\
			&= 0
		\end{align*}
		for all $h$ in $H^2$.  In other words, $Cf$ belongs to $H^2$.   Similarly,
		\begin{equation*}
			\inner{Cf, u h}  = \inner{ \overline{fz}u, u h} = \inner{ \overline{fz}, h} 
			= 0, 
		\end{equation*}
		from which it follows that $Cf$ belongs to $\K_{u}$.  
	\end{proof}

	\begin{Example}
		If $u(z) = z^n$, then $\K_u = \bigvee\{1, z, \ldots, z^{n - 1}\}$ and the conjugation $C$ assumes the form 
		\begin{equation*}
			C(a_0 + a_1 z + \cdots a_{n - 1} z^{n - 1})  =  \overline{a_{n - 1}}+ \overline{a_{n-2}} z +  \cdots + \overline{a_0} z^{n-1}.
		\end{equation*}
		Now suppose that $u$ is a finite Blaschke product having zeros $\lambda_1,\lambda_2,\ldots,\lambda_n$,
		repeated according to multiplicity.  Referring back to Proposition \ref{Prop:FDMS} for an explicit description of $\K_u$, 
		one can show that
		\begin{equation*}
			C \left( \frac{a_0 + a_1 z + \cdots +a_{n-1} z^{n-1}}{ \prod_{i=1}^n(1-\overline{\lambda_i}z)} \right)
			=\frac{ \overline{a_{n-1}} + \overline{a_{n-2}} z + \cdots +\overline{a_0} z^{n-1}}{ \prod_{i=1}^n(1-\overline{\lambda_i}z)} .
		\end{equation*}
		In other words, each element of $\K_u$ can be represented as a rational function whose denominator is
		$\prod_{i=1}^n(1-\overline{\lambda_i}z)$ and whose numerator is a polynomial of degree $\leqslant n-1$.
		The conjugation $C$ acts on $\K_u$ by reversing the order of the polynomial in the numerator and conjugating its coefficients.
	\end{Example}

	\begin{Example}\label{Example:DQ}
		A straightforward computation reveals that $C$ sends reproducing kernels to difference quotients:
		\begin{align*}
			[Ck_{\lambda}](z) 
			&=\overline{\left( \frac{1 - \overline{u(\lambda)}u(z)}{1 - \overline{\lambda}z} \right) } \overline{z}u(z)   \\
			&=\frac{1 - u(\lambda) \overline{u(z)}}{1 - \lambda\overline{z}}\cdot \frac{u(z)}{z}  \\
			&= \frac{ u(z) - u(\lambda) }{z - \lambda}, 
		\end{align*}
		where, as usual, we consider all of the functions involved as functions on $\T$ (i.e., so that $z\overline{z}=1$).
		For each $\lambda$, the kernel function $k_{\lambda}$ is an outer function since it is the quotient of the two
		outer functions $1 - \overline{u(\lambda)}u(z)$ and $1 - \overline{\lambda}z$ \cite[Ex.~1, Ch.~3]{Duren}.
		In light of Proposition \ref{Prop:C}, we expect that the difference quotient $Ck_{\lambda}$ is simply an inner multiple
		of $k_{\lambda}$.  This is indeed the case as the following computation shows.
		\begin{align*}
			[Ck_{\lambda}](z)	
			&=	\frac{ u(z) - u(\lambda)}{z - \lambda}\\
			&=	\frac{ u(z) - u(\lambda)}{1 - \overline{u(\lambda)}u(z)}\cdot \frac{1 - \overline{\lambda}z}{z - \lambda}
				\cdot \frac{1 - \overline{u(\lambda)}u(z)}{1 - \overline{\lambda}z}\\
			&=	\frac{b_{u(\lambda)}(u(z))}{b_{\lambda}(z)} k_{\lambda}(z),
		\end{align*}
		where $b_w(z) = (z-w)/(1-\overline{w}z)$ is the disk automorphism \eqref{eq:b}.
	\end{Example}

	\begin{Example}\label{Example:BK}
		If $\zeta$ is a point on $\T$ for which $u$ has an ADC (see Subsection \ref{Subsection:Limits}), then
		the boundary kernel $k_{\zeta}$, as defined in Theorem \ref{TheoremAC}, belongs to $\K_u$.  In this case,
		something remarkable occurs.  Since $\zeta$ and $u(\zeta)$ are of unit modulus, it follows that
		\begin{align*}
			[Ck_{\zeta}](z)	
			&=	\frac{ u(z) - u(\zeta) }{z - \zeta}\\
			&=	\frac{u(\zeta)}{\zeta} \cdot \frac{1 - \overline{u(\zeta)}u(z)}{1 - \overline{\zeta}z}\\
			&=	\overline{\zeta} u(\zeta) u_{\zeta}(z).
		\end{align*}
		For either branch of the square root, it follows that the outer function
		\begin{equation*}
			(\overline{\zeta} u(\zeta))^{\frac{1}{2}} k_{\zeta}
		\end{equation*}
		belongs to $\K_u$ and is fixed by $C$.  
	\end{Example}

	Under certain circumstances, it is possible to construct $C$-real orthonormal bases for $\K_u$ 
	using boundary kernels.  This is relatively straightforward for finite dimensional model spaces, as the following
	example demonstrates (see Section \ref{Section:Clark} for a discussion of the general setting).

\begin{Example}
	Let $u$ be a finite Blaschke product with $n$ zeros repeated according to multiplicity.
	For the sake of simplicity, suppose that $u(0) = 0$.
	For each fixed $\alpha$ in $\T$, the equation $u(\zeta) = \alpha$ has precisely $n$ distinct solutions
	$\zeta_1, \zeta_2, \ldots, \zeta_n$ on $\T$.  The functions 
	\begin{equation*}
		e_j(z) =  \frac{e^{\frac{i}{2}(\arg \alpha - \arg \zeta_j ) }}{\sqrt{|u'(\zeta_j)|}}\frac{1 - \overline{\alpha} u(z)}{1 - \overline{\zeta_j} z}
	\end{equation*}
	for $j=1,2,\ldots, n$ form a $C$-real orthonormal basis of $\K_{u}$.
\end{Example}

\subsection{Associated inner functions}
	Fix an inner function $u$ and let $f = I_1 F$ denote the inner-outer factorization of a nonzero function $f$ in $\K_u$.
	Proposition \ref{Prop:C} ensures that $g = Cf$ has the same outer factor as $f$ whence we may write
	$g = I_2 F$ for some inner function $I_2$. 
	Since $g = \overline{fz}u$ almost everywhere on $\T$ it follows that $I_2 F = \overline{I_1 F z}u$ whence
	\begin{equation}\label{eq:IfIgF}
		I_1 I_2  = \frac{\overline{Fz}u}{F}.
	\end{equation}
	The inner function $\mathcal{I}_F = I_1 I_2$ is called the \emph{associated inner function} of $F$ with respect to $u$ \cite{CBSTK}.\footnote{We should note 
		that the influential article \cite[Rem.~3.1.6]{DSS} of Douglas, Shapiro, and Shields refers to $u$ itself as the associated
		inner function of $f$.}	
	In light of \eqref{eq:IfIgF}, we see that $\mathcal{I}_F$ is uniquely determined by $u$ and $F$.
	
	On the other hand, if $I_1$ and $I_2$ are inner functions such that $I_1 I_2 = \mathcal{I}_F$, then \eqref{eq:IfIgF} implies that
	\begin{equation*}
		I_1 F = \overline{I_2 F z}u
	\end{equation*}
	so that the functions $f = I_1 F$ and $g = I_2 F$ satisfy $f = \overline{gz}u$ almost everywhere on $\T$.
	By Proposition \ref{Prop:fgzu}, it follows that $f$ and $g$ belong to $\K_u$ and satisfy $Cf = g$.	
	Putting this all together, we obtain the following result.

	\begin{Proposition}
		The set of all functions in $\K_u$ having outer factor $F$ is precisely 
		\begin{equation}\label{eq:Poset}
			\{ \theta F : \text{$\theta$ inner and $\theta | \mathcal{I}_F$}\}. 
		\end{equation}
		We define a partial ordering on \eqref{eq:Poset} by declaring that
		$\theta_1 F \leq \theta_2 F$ if and only if $\theta_1 | \theta_2$.  With respect to this ordering, 
		$F$ and $\mathcal{I}_F F$ are the unique minimal and maximal elements, respectively.
		Moreover, $C$ restricts to an order-reversing bijection from \eqref{eq:Poset} to itself.
	\end{Proposition}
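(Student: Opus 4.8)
The plan is to verify the three claims of the proposition in sequence, relying on the preceding discussion that established the equation $I_1 I_2 = \mathcal{I}_F$ characterizing which factorizations of $\mathcal{I}_F$ arise from functions in $\K_u$ sharing the outer factor $F$. The bulk of the work was already done in the paragraphs leading up to the statement: it was shown that $f = \theta F \in \K_u$ with outer part $F$ forces the inner factor $\theta$ to divide $\mathcal{I}_F$, and conversely that any inner $\theta \mid \mathcal{I}_F$ yields $\theta F \in \K_u$. So the description of the set \eqref{eq:Poset} is essentially in hand, and I would simply invoke it.

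**First I would** dispense with the poset structure. The relation $\theta_1 F \leq \theta_2 F \iff \theta_1 \mid \theta_2$ inherits reflexivity, antisymmetry, and transitivity directly from divisibility of inner functions (antisymmetry uses that two inner functions each dividing the other differ by a unimodular constant, which one absorbs into the outer factor). The minimal element is $F$ itself, corresponding to the trivial inner factor $\theta = 1$, since $1 \mid \theta$ for every inner $\theta \mid \mathcal{I}_F$; the maximal element is $\mathcal{I}_F F$, corresponding to $\theta = \mathcal{I}_F$, since every such $\theta$ divides $\mathcal{I}_F$. Uniqueness of these extreme elements follows from antisymmetry.

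**The main content** is the order-reversing bijection claim. Here I would argue as follows. Given $f = I_1 F \in \K_u$ with $I_1 \mid \mathcal{I}_F$, Proposition \ref{Prop:C} guarantees $Cf$ has outer factor $F$, so $Cf = I_2 F$ for some inner $I_2$, and the identity \eqref{eq:IfIgF} gives $I_1 I_2 = \mathcal{I}_F$. Thus $C$ sends the element indexed by $I_1$ to the element indexed by $I_2 = \mathcal{I}_F / I_1$. Since $C^2 = I$ (because $C$ is a conjugation), $C$ is its own inverse and hence a bijection of \eqref{eq:Poset} onto itself. For the order-reversing property, suppose $I_1 \mid I_1'$, i.e., $I_1 F \leq I_1' F$; I must show $C(I_1' F) \leq C(I_1 F)$, i.e., $\mathcal{I}_F / I_1' \mid \mathcal{I}_F / I_1$. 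But $I_1 \mid I_1'$ together with $I_1' \mid \mathcal{I}_F$ immediately yields $\mathcal{I}_F / I_1' \mid \mathcal{I}_F / I_1$ as a quotient of inner functions, which is exactly the reversed inequality.

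**The one step requiring slight care**, and the only place where something could go wrong, is the bookkeeping of unimodular constants: the factorizations $f = I_1 F$ and $g = I_2 F$ are unique only up to a unimodular scalar shared between inner and outer parts, so the map $I_1 \mapsto \mathcal{I}_F / I_1$ is well-defined only after fixing a normalization of $F$. I would address this by fixing the outer factor $F$ once and for all (as is implicit in the statement, since the set \eqref{eq:Poset} is defined relative to a fixed $F$), so that each element of \eqref{eq:Poset} corresponds to a genuinely unique inner divisor $\theta$ of $\mathcal{I}_F$; with this convention the bijection and the order-reversal are unambiguous. This is a routine normalization rather than a genuine obstacle, so the proof is short.
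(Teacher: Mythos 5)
Your proposal is correct and follows essentially the same route as the paper, which proves this proposition entirely through the discussion preceding its statement: the identity $I_1 I_2 = \mathcal{I}_F$ for $f = I_1F$ and $Cf = I_2F$, together with its converse via Proposition \ref{Prop:fgzu}, and then ``putting this all together.'' Your added verification of the poset axioms, the identification of the extremes, and the observation that $C$ acts by $\theta \mapsto \mathcal{I}_F/\theta$ (order-reversing since $\theta_1 \mid \theta_2 \mid \mathcal{I}_F$ gives $\mathcal{I}_F/\theta_2 \mid \mathcal{I}_F/\theta_1$) are exactly the routine details the paper leaves implicit, and your remark about unimodular constants correctly flags the only normalization subtlety.
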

	
\subsection{Generators of $\K_u$}

	We say that a function $f$ in $\K_u$ \emph{generates $\K_u$} if 
	\begin{equation}\label{eq:fSSS}
		\bigvee\{f,S^*f,S^{*2}f,\ldots\} = \K_u.
	\end{equation}
	Recall that Proposition \ref{Prop:Generator} tells us that if $f = \overline{gz}u$ belongs to $\K_u$,
	then \eqref{eq:fSSS} holds if and only if
	$u$ and the inner factor of $g$ are relatively prime.  Recognizing that $g = Cf$ in this setting
	immediately yields \cite[Prop.~4.3]{CBSTK}.

	\begin{Proposition}
		If $f$ belongs to $\K_u$ and the inner factor of $Cf$ is relatively prime to $u$, then
		$f$ generates $\K_u$.
	\end{Proposition}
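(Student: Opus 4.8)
The plan is to reduce the statement directly to Proposition \ref{Prop:Generator} by recognizing that the conjugate $Cf$ is exactly the function $g$ appearing in the boundary representation of $f$. First I would invoke Proposition \ref{Prop:fgzu}: since $f \in \K_u$, there is some $g \in H^2$ with $f = \overline{gz}u$ almost everywhere on $\T$. The key observation, which does all the work, is that this $g$ is nothing other than $Cf$.

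To verify this I would work entirely with boundary functions on $\T$, where $|z| = 1$ and $|u| = 1$ hold almost everywhere, so that $z\overline{z} = 1$ and $u\overline{u} = 1$. Taking conjugates of $f = \overline{gz}u$ gives $\overline{f} = gz\overline{u}$, and substituting into the definition $Cf = \overline{fz}u$ from Proposition \ref{Prop:C} yields
\begin{equation*}
    Cf = \overline{f}\,\overline{z}\,u = (gz\overline{u})\,\overline{z}\,u = g\,(z\overline{z})(u\overline{u}) = g
\end{equation*}
almost everywhere on $\T$. Hence $Cf = g$, and in particular the inner factor of $g$ coincides with the inner factor of $Cf$.

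With this identification in hand the conclusion is immediate. By hypothesis the inner factor of $Cf$, equivalently the inner factor of $g$, is relatively prime to $u$. Proposition \ref{Prop:Generator} then applies verbatim: it states precisely that when $u$ and the inner factor of $g$ are relatively prime, the $S^*$-invariant subspace generated by $f = \overline{gz}u$ is all of $\K_u$, that is, $\bigvee\{f, S^*f, S^{*2}f, \ldots\} = \K_u$, which is exactly the assertion that $f$ generates $\K_u$.

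The only genuine content is the boundary computation showing $Cf = g$; everything else is an appeal to the Douglas--Shapiro--Shields result already recorded as Proposition \ref{Prop:Generator}. I do not anticipate a real obstacle, since the computation uses only the unimodularity of $z$ and $u$ on $\T$. The single point requiring care is that all the identities are understood as identities of \emph{boundary functions} rather than of analytic functions on $\D$, exactly as in the statement of Proposition \ref{Prop:fgzu} and the definition of $C$ in Proposition \ref{Prop:C}.
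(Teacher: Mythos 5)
Your proof is correct and follows the same route as the paper, which likewise observes that the function $g$ in the representation $f = \overline{gz}u$ is exactly $Cf$ and then applies Proposition \ref{Prop:Generator}. Your explicit boundary computation verifying $Cf = g$ is a detail the paper leaves implicit, but the argument is identical in substance.
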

	
	\begin{Example}
		Each difference quotient $Ck_{\lambda} = (u(z)-u(\lambda))/(z-\lambda)$ generates $\K_u$
		since its conjugate $C(Ck_{\lambda}) = k_{\lambda}$ is outer.
		If $u$ is a singular inner function (i.e., $u$ is not divisible by any Blaschke product), then
		Frostman's Theorem \cite[Ex.~8, Ch.~2]{Duren}, \cite{Frostman} asserts that $k_{\lambda}$ generates $\K_u$ for almost every (with respect to area measure) $\lambda$ in $\D$.
		Indeed, Example \ref{Example:DQ} tells us that the inner factor of $Ck_{\lambda}$ is precisely 
		$b_{u(\lambda)}(u(z)))/b_{\lambda}(z)$, which is a Blaschke product whenever $u(\lambda)$ is nonzero and does not
		lie in the exceptional set for $u$.
	\end{Example}

	\begin{Corollary}
		If $f$ is an outer function in $\K_u$, then $Cf$ generates $\K_u$.  In particular, 
		any self-conjugate outer function in $\K_u$ generates $\K_u$.  
	\end{Corollary}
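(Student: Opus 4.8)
The plan is to invoke the preceding Generators Proposition, but applied to the function $Cf$ in place of $f$. That proposition asserts that a function $g$ in $\K_u$ generates $\K_u$ whenever the inner factor of $Cg$ is relatively prime to $u$. Taking $g = Cf$, which again lies in $\K_u$ by Proposition \ref{Prop:C}, the relevant conjugate to examine is $C(Cf)$.

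The single point that makes everything collapse is the involutivity of $C$: since $C^2 = I$ (Proposition \ref{Prop:C}), we have $C(Cf) = f$. Thus the hypothesis to be checked reduces to the assertion that the inner factor of $f$ is relatively prime to $u$. But $f$ is outer, so its inner factor is a unimodular constant, and by Definition \ref{divides-def}(iii) such a constant is relatively prime to every inner function, its only inner divisors being unimodular constants. The Generators Proposition then yields at once that $Cf$ generates $\K_u$. This is precisely the pattern already exhibited in the preceding example, where $Ck_{\lambda}$ was shown to generate $\K_u$ because $C(Ck_{\lambda}) = k_{\lambda}$ is outer.

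For the final assertion, I would simply observe that a self-conjugate outer function $f$ satisfies $Cf = f$, so the conclusion ``$Cf$ generates $\K_u$'' becomes ``$f$ generates $\K_u$.'' As for difficulties, there is essentially no obstacle here: once the Generators Proposition is in hand, the argument is a one-line application resting only on $C^2 = I$ and the triviality of the inner factor of an outer function. The one thing worth double-checking is that $Cf$ is a genuine nonzero member of $\K_u$ to which the proposition applies; this follows because $C$ is an isometric conjugation of $\K_u$ onto itself, so $f \neq 0$ forces $Cf \neq 0$.
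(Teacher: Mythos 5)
Your proposal is correct and is exactly the argument the paper intends: apply the generators proposition to $Cf$, note that $C(Cf)=f$ is outer by involutivity, so its inner factor is a unimodular constant and hence relatively prime to $u$. This mirrors the paper's preceding example with $Ck_{\lambda}$, and the self-conjugate case follows immediately from $Cf=f$.
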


	\begin{Example}
		Any boundary kernel $k_{\zeta}$ in $\K_u$ generates $\K_u$.  Indeed, Example \ref{Example:BK}
		shows that any such function is a constant multiple of a self-conjugate outer function.
	\end{Example}
	
\subsection{Quaternionic structure of $2 \times 2$ inner functions}	
	It at least one instance, it turns out that conjugations on model spaces can yield structural insights into the nature
	of higher order inner functions.  The following result can be found in \cite{IMDS}, although it is stated in
	\cite{G-P} without proof, along with several other related results.

	\begin{Theorem}
		If $u$ is a nonconstant inner function, then the function $\Theta:\D\to M_2(\C)$ defined by
		\begin{equation}\label{eq:MIF}
			\Theta = \minimatrix{a}{-b}{Cb}{Ca} 
		\end{equation}
		is unitary almost everywhere on $\T$ and satisfies $\det \Theta = u$ if and only if 
		\begin{enumerate}\addtolength{\itemsep}{0.5\baselineskip}
			\item	$a,b,c,d$ belong to $\K_{zu}$. 
			\item	$|a|^2 + |b|^2 = 1$ almost everywhere on $\T$.
			\item	$Ca = d$ and $Cb = c$.
		\end{enumerate}
		Here $C:\K_{zu}\to\K_{zu}$ is the conjugation $Cf = \overline{f}u$ on $\K_{zu}$.
	\end{Theorem}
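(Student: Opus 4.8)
The plan is to prove both implications by converting the almost-everywhere unitarity of $\Theta$ into four scalar boundary identities and reading off the three conditions with the help of Proposition \ref{Prop:fgzu} and the basic properties of the conjugation $Cf = \overline{f}u$ recorded in Proposition \ref{Prop:C}. Writing the matrix as $\Theta = \minimatrix{a}{-b}{c}{d}$ (so that condition (iii) is precisely the assertion $c = Cb$, $d = Ca$), I would use throughout that $|u| = 1$, that $Cf = \overline{f}u$, and that $|Cf| = |f|$ almost everywhere on $\T$.

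For the direction $(\Leftarrow)$ I would simply verify the two claims by substitution. Assuming (i)--(iii), the entries lie in $\K_{zu} \subseteq H^2$, so $\Theta$ is analytic, and on $\T$ the relations $d = Ca = \overline{a}u$ and $c = Cb = \overline{b}u$ give
\begin{equation*}
\det \Theta = ad + bc = a\overline{a}u + b\overline{b}u = (|a|^2 + |b|^2)u = u
\end{equation*}
almost everywhere; since $ad + bc$ lies in $H^1$ and agrees with $u$ on $\T$, the two coincide on all of $\D$. The same substitutions show $(\Theta^*\Theta)_{11} = |a|^2 + |c|^2 = |a|^2 + |b|^2 = 1$ (using $|c| = |Cb| = |b|$), and similarly for the $(2,2)$ entry, while the off-diagonal entry is $-\overline{a}b + \overline{c}d = -\overline{a}b + (b\overline{u})(\overline{a}u) = 0$; hence $\Theta$ is unitary a.e.\ on $\T$.

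For the direction $(\Rightarrow)$ I would exploit that unitarity a.e.\ together with $\det\Theta = u$ forces $\Theta^{*} = \Theta^{-1} = u^{-1}\operatorname{adj}\Theta$ almost everywhere on $\T$, that is,
\begin{equation*}
\minimatrix{\overline{a}}{\overline{c}}{-\overline{b}}{\overline{d}} = \frac{1}{u}\minimatrix{d}{b}{-c}{a}.
\end{equation*}
Comparing entries yields $d = \overline{a}u = Ca$ and $c = \overline{b}u = Cb$, which is (iii); the diagonal of $\Theta^*\Theta = I$ then reads $|a|^2 + |c|^2 = 1$, and $|c| = |Cb| = |b|$ gives (ii). Finally, from $d = \overline{a}u \in H^2$ one has $\overline{zu}\,a = \overline{z}(\overline{u}a) = \overline{z}\,\overline{d} \in \overline{zH^2}$, so the orthogonality characterization in the proof of Proposition \ref{Prop:fgzu}, applied to the inner function $zu$, places $a \in \K_{zu}$; the identical argument with $c$ in place of $d$ gives $b \in \K_{zu}$, and then $c = Cb$ and $d = Ca$ lie in $\K_{zu}$ automatically, establishing (i).

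The individual computations are short; the points requiring care are the passage between boundary identities and analytic identities on $\D$ and the verification of (i). In the forward direction one must first justify that the entries are genuine $H^2$ (indeed $H^{\infty}$) functions: each entry is analytic with boundary modulus at most one, so Remark \ref{N-facts} upgrades it from $N^+$ to $H^{\infty}$. The main obstacle is deriving membership in $\K_{zu}$: this is exactly where the boundary relation $Ca = d \in H^2$ must be converted, through Proposition \ref{Prop:fgzu}, into the orthogonality statement $a \perp zuH^2$, and keeping straight which functions are known to be analytic and which identities hold only almost everywhere on $\T$ is the crux of the argument.
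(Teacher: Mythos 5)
The survey does not actually prove this theorem: it states the result and refers the reader to \cite{IMDS} (and notes it appears without proof in \cite{G-P}), so there is no in-paper argument to compare yours against. Taken on its own terms, your proof is essentially correct and complete, and it uses exactly the tools the survey makes available: the boundary characterization $\K_{v}=H^{2}\cap v\,\overline{zH^{2}}$ from Proposition \ref{Prop:fgzu} applied with $v=zu$, the identity $Cf=\overline{f}u$ on $\K_{zu}$, and the elementary matrix identity $\Theta^{*}=\Theta^{-1}=u^{-1}\operatorname{adj}\Theta$ a.e.\ on $\T$. Your reading of the statement --- taking the matrix to be $\minimatrix{a}{-b}{c}{d}$ with (iii) as the genuine assertion $c=Cb$, $d=Ca$, rather than the literal (and then vacuous) form printed in the survey --- is the correct interpretation of the result from \cite{IMDS}. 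Both implications check out: the backward direction by direct substitution plus the $H^{1}$ boundary-uniqueness step to upgrade $\det\Theta=u$ from $\T$ to $\D$, and the forward direction by comparing $\Theta^{*}$ with $u^{-1}\operatorname{adj}\Theta$ entrywise and then feeding $\overline{zu}\,a=\overline{zd}\in\overline{zH^{2}}$ into Proposition \ref{Prop:fgzu}.

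The one soft spot is your regularity step in the forward direction. You assert that each entry is ``analytic with boundary modulus at most one, so Remark \ref{N-facts} upgrades it from $N^{+}$ to $H^{\infty}$,'' but membership in $N^{+}$ is precisely what needs to be supplied first: the survey's own footnote example $\exp\bigl(-\tfrac{z+1}{z-1}\bigr)$ is analytic on $\D$ with unimodular boundary values yet lies outside $H^{2}$, so analyticity plus bounded boundary data alone does not suffice. The honest fix is to make explicit the implicit hypothesis that $\Theta$ is a \emph{bounded} analytic matrix function (equivalently, that $a,b,c,d\in H^{\infty}$, or at least $N^{+}$), which is how matrix inner functions are defined in the sources; with that hypothesis stated, your argument goes through without change.
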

	
	We remark that \eqref{eq:MIF} is entirely analogous to the representation of quaternions of unit modulus
	using $2 \times 2$ complex matrices.  This representation was exploited in \cite{Chevrot} to study the
	characteristic function of a complex symmetric contraction.

\subsection{Cartesian decomposition}
	Each $f$ in $\K_u$ enjoys a \emph{Cartesian decomposition} $f=a+ib$ where the functions
	\begin{equation*}
		a = \tfrac{1}{2}(f + Cf),\qquad
		b=\tfrac{1}{2i}(f - Cf),
	\end{equation*}
	are both fixed by $C$.  With respect to this decomposition the conjugation $C$ on $\K_u$
	assumes the form $Cf = a-ib$.	
	To describe the functions that belong to $\K_u$, it suffices to describe those functions that
	are \emph{$C$-real} (i.e., such that $Cf = f$).  
	
	If $f = Cf$, then
	\begin{equation*}
		f = \overline{fz} u.
	\end{equation*}
	Since $u$ has unit modulus a.e.~on $\T$, by replacing $u$ with a suitable unimodular constant
	multiple of $u$ (which does not change the model space $\K_u$), we may assume that $u(\zeta) = \zeta$ for some $\zeta$ in $\T$. 

	If $u$ has an ADC at $\zeta$, then Theorem \ref{TheoremAC} tells us that the boundary kernel $k_{\zeta}$
	belongs to $\K_u$.  In the event that $k_{\zeta}$ fails to belong to $\K_u$ (i.e., $k_{\zeta}$ is not in $H^2)$, 
	we still can assert that $k_{\zeta}$ resides in 
	the \emph{Smirnov class} $N^+$ (quotients of bounded analytic functions with outer denominator, see
	Definition \ref{bounded-type}) \cite[Sect.~2.5]{Duren}.
	A little arithmetic tells us that
	\begin{align*}
		k_{\zeta} / \overline{k_{\zeta}}	
		&=	\frac{1 - \overline{\zeta}u}{1 - \overline{\zeta}z} \cdot 
			\frac{1 - \zeta\overline{z}}{1 - \zeta \overline{u}}  \\
		&=	\frac{1 - \overline{\zeta}u}{1 - \zeta \overline{u}} \cdot 
			\frac{1 - \zeta\overline{z}}{1 - \overline{\zeta}z}  \\ 
		&=	\overline{\zeta}u\left(\frac{\zeta \overline{u}-1 }{1 - \zeta \overline{u}}\right)
			\cdot  \zeta\overline{z} \left(	\frac{\overline{\zeta}z-1 }{1 - \overline{\zeta}z}\right)  \\  
		&=	\zeta \overline{\zeta}\overline{z}u\\
		&=\overline{z}u,
	\end{align*}
	which, when substituted into the equation $f = \overline{fz}u$, reveals that
	$$f/ k_{\zeta} = \overline{f / k_{\zeta}}.$$  Thus a function $f$ in $\K_u$
	satisfies $f = Cf$ if and only if $f/k_{\zeta}$ belongs to $N^+$ and is real almost everywhere on $\T$.

	\begin{Definition}
		A function $f$ belonging to the Smirnov class $N^+$ is called a 
		\emph{real Smirnov function} if its boundary function is real valued almost everywhere on $\T$. 
		Denote the set of all real Smirnov functions by $\mathcal{R}^+$.
	\end{Definition}
	
	The following elegant theorem of Helson \cite{Helson} provides an explicit formula for
	real Smirnov functions.

	\begin{Theorem}
		If $f \in \mathcal{R}^+$, then there exist inner functions $\psi_1, \psi_2$ such that
		\begin{equation}\label{eq:Helson}
			f = i \frac{\psi_1 + \psi_2}{\psi_1 - \psi_2} 
		\end{equation}
		and $\psi_1 - \psi_2$ is outer.
	\end{Theorem}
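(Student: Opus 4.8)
The plan is to realize $f$ through a Cayley transform and then read off the two inner functions from the canonical factorizations of $f\pm i$. First I would set $w := (f-i)/(f+i)$. Since $f$ is real almost everywhere on $\T$, we have $|f-i| = |f+i| = \sqrt{f^2+1}$ there, so $|w|=1$ a.e.\ on $\T$. Solving $f = i(1+w)/(1-w)$ for $w$ recovers exactly this expression, so producing a representation $f = i(\psi_1+\psi_2)/(\psi_1-\psi_2)$ is equivalent to exhibiting $w = \psi_2/\psi_1$ as a quotient of two inner functions. Note that $f+i$ and $f-i$ are nonzero and lie in $N^{+}$, since $N^{+}$ is a ring containing the constants.

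To produce $\psi_1,\psi_2$ I would factor the Smirnov functions $f\pm i$ into inner times outer: write $f+i = \psi_1 G_1$ and $f - i = \psi_2 G_2$ with $\psi_1,\psi_2$ inner and $G_1,G_2$ outer. Because $|f+i| = |f-i|$ on $\T$ and inner functions are unimodular there, $|G_1| = |G_2|$ a.e.; since an outer function is determined up to a unimodular constant by its boundary modulus, $G_1 = c\,G_2$ for some $c\in\T$. Absorbing $\overline{c}$ into $\psi_2$ gives $w = \psi_2/\psi_1$ and hence the desired identity $f = i(\psi_1+\psi_2)/(\psi_1-\psi_2)$, together with its algebraic consequences $f + i = 2i\psi_1/(\psi_1-\psi_2)$ and $f - i = 2i\psi_2/(\psi_1-\psi_2)$.

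The real work is the outer claim, and I would first establish that $\psi_1$ and $\psi_2$ are relatively prime. Writing $f = a/b$ with $a,b\in H^{\infty}$ and $b$ outer (the definition of $N^{+}$), the inner factor of $f+i = (a+ib)/b$ is the inner factor of $a+ib$, and likewise $\psi_2$ arises from $a-ib$. Any common inner divisor $\theta$ of $\psi_1,\psi_2$ then divides both $a+ib$ and $a-ib$, hence divides their sum $2a$ and difference $2ib$; in particular $\theta$ divides the outer function $b$, forcing $\theta$ to be constant.

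Finally I would prove $\psi_1-\psi_2$ is outer by a divisibility argument. Factor $\psi_1-\psi_2 = J\,O$ with $J$ inner and $O$ outer. From $f+i = 2i\psi_1/(JO)$, the canonical Nevanlinna factorization of $f+i$ has $J$ (after cancelling the part common to $\psi_1$) as its inner denominator; but $f+i\in N^{+}$ has trivial inner denominator, so $J\mid\psi_1$, and symmetrically $f-i\in N^{+}$ forces $J\mid\psi_2$. Since $\psi_1,\psi_2$ are relatively prime, $J$ is constant and $\psi_1-\psi_2$ is outer. The main obstacle I anticipate is precisely this last step: making rigorous that membership in $N^{+}$ — rather than merely in $N$ — forces the cancellation $J\mid\psi_1$, which is exactly where the Smirnov hypothesis, as opposed to mere bounded type, is essential.
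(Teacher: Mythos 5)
Your proof is correct and takes the same essential route as the paper's: pass through the Cayley transform $\tau(z)=i(1+z)/(1-z)$, observe that $(f-i)/(f+i)$ is of bounded type and unimodular a.e.\ on $\T$, and hence is a quotient of inner functions. In fact your argument is more complete than the paper's two-line sketch, which never addresses the assertion that $\psi_1-\psi_2$ is outer; note also that your last two steps (relative primality plus the $N^+$ denominator argument) can be collapsed, since comparing $f+i=\psi_1 G_1$ with $f+i=2i\psi_1/(\psi_1-\psi_2)$ gives $\psi_1-\psi_2=2i/G_1$ at once, and the reciprocal of an outer function is outer.
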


	\begin{proof}
		The function $$\tau(z) = i\frac{1+z}{1-z}$$ maps $\D$ onto the upper half-plane.
		Thus $\tau^{-1} \circ f$ is of bounded characteristic (i.e., a quotient of $H^{\infty}$ functions) and is unimodular almost everywhere on $\T$.
		Thus $\tau^{-1} \circ f = \psi_1 / \psi_2$ is a quotient of inner functions and hence
		$f$ has the desired form.
	\end{proof}

	Since each $f$ in $\K_u$ can be expressed in the form $(\alpha + i \beta) k_{\zeta}$ where $\alpha$ and $\beta$
	belong to $\mathcal{R}^+$, it follows from Helson's Theorem that any function in $\K_u$ can be written
	in a simple algebraic manner using only a handful of inner functions.  In particular, pseudocontinuations
	arise directly through the mechanism of Schwarz reflection (i.e., through pseudocontinuations of inner functions).

	Unfortunately, the Helson representation \eqref{eq:Helson} can be difficult to compute with.  For instance,
	it is a somewhat messy calculation to find a Helson representation for the \emph{K\"obe function} 
	\begin{equation*}
		k(z) = \frac{z}{(1-z)^2} = z +2z^2 + 3z^3 + \cdots,
	\end{equation*}
	which maps $\D$ bijectively onto $\C \backslash (-\infty,-\frac{1}{4}]$.  We therefore propose a more constructive
	description of the functions in $\R^+$, which originates in \cite{ROF, SCSSC}.
	First note that if $f = I_f F$ is the inner-outer factorization of a function $\mathcal{R}^+$, then
	\begin{equation*}
		I_f F =  \left[\frac{-4 I_f}{(1 - I_f)^2} \right]  \cdot \left[  \frac{(1 - I_f)^2 F}{-4} \right],
	\end{equation*}
	where the first term is in $\mathcal{R}^+$ and has the same inner factor as $f$
	and where the second is \emph{outer} and in $\mathcal{R}^+$.
	Thus to describe functions in $\mathcal{R}^+$, it suffices to describe \emph{real outer (RO) functions}.
	An infinite product expansion, in terms of Cayley-like transforms of inner functions, is obtained in \cite{ROF}.
	We refer the reader there for further details.

\section{Aleksandrov-Clark measures}\label{Section:Clark}
	The study of Aleksandrov-Clark measures dates back to the seminal work of Clark \cite{MR0301534} 
	and it continued with deep work of Aleksandrov \cite{MR931885, MR1039571, MR1734326} and Poltoratski \cite{MR1223178}.
	Since then, Aleksandrov-Clark measures have appeared in the study of spectral theory, composition operators, 
	completeness problems, and mathematical physics. Several sources for this important topic, 
	including references to their applications and the connections mentioned above, are \cite{CMR, MR2198367, MR2394657}.

\subsection{Clark measures} 
	If $\mu$ is a probability measure on $\T$ then the function
	\begin{equation*}
		F_{\mu}(z) := \int_{\T} \frac{\zeta + z}{\zeta - z} \,d \mu(\zeta)
	\end{equation*}
	is analytic on $\D$ and satisfies $F_{\mu}(0) = 1$. Furthermore, a simple computation shows that 
	\begin{equation*}
		\Re F_{\mu}(z) = \int_{\T} \frac{1 - |z|^2}{|\zeta - z|^2} \,d \mu(\zeta)
	\end{equation*}
	is positive on $\D$.  The following classical result of Herglotz asserts that this process can be reversed \cite[p.~10]{Duren}.

	\begin{Theorem}[Herglotz]\label{Herglotz}
		If $F$ is analytic on $\D$, $F(0) = 1$, and $\Re F > 0$ on $\D$, 
		then there is a unique probability measure $\mu$ on $\T$ such that $F = F_{\mu}$. 
	\end{Theorem}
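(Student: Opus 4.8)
The plan is to prove existence and uniqueness separately, obtaining existence from a weak-$*$ compactness argument on the dilated circles $|z| = r$ and uniqueness from the power-series expansion of the Herglotz kernel. For existence, I would first exploit the positivity of $\Re F$. For each $r \in (0,1)$, define a finite positive Borel measure on $\T$ by $d\mu_r(\zeta) := \Re F(r\zeta)\,dm(\zeta)$. Since $\Re F$ is harmonic on $\D$, the mean value property gives $\mu_r(\T) = \int_\T \Re F(r\zeta)\,dm(\zeta) = \Re F(0) = 1$, so each $\mu_r$ is a probability measure. As the set of probability measures on the compact space $\T$ is weak-$*$ compact (Banach--Alaoglu), along some sequence $r_n \to 1^-$ we have $\mu_{r_n} \to \mu$ weak-$*$, where $\mu$ is again a probability measure.

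Next I would verify $F = F_\mu$. Fix $z \in \D$. Since $\Re F$ is harmonic on $\D$, the dilate $w \mapsto \Re F(r_n w)$ is harmonic on $\{|w| < 1/r_n\}$, a neighborhood of the closed disk $\overline{\D}$, so the Poisson integral formula yields
$$\Re F(r_n z) = \int_\T P_z(\zeta)\,\Re F(r_n\zeta)\,dm(\zeta) = \int_\T P_z(\zeta)\,d\mu_{r_n}(\zeta).$$
For fixed $z$ the Poisson kernel $\zeta \mapsto P_z(\zeta)$ is continuous on $\T$, so weak-$*$ convergence passes to the limit on the right while continuity of $F$ handles the left, giving $\Re F(z) = \int_\T P_z(\zeta)\,d\mu(\zeta) = \Re F_\mu(z)$, the last equality being the computation recorded just before the theorem. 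Thus $F$ and $F_\mu$ are analytic on $\D$ with equal real parts, so they differ by a purely imaginary constant; evaluating at $z = 0$ gives $F(0) = 1 = \mu(\T) = F_\mu(0)$, which forces that constant to vanish, and hence $F = F_\mu$.

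For uniqueness, I would expand the kernel geometrically: for $|z| < 1 = |\zeta|$,
$$\frac{\zeta + z}{\zeta - z} = 1 + 2\sum_{n=1}^\infty \overline{\zeta}^{\,n} z^n.$$
Integrating against a probability measure $\nu$ shows that the Taylor coefficients of $F_\nu$ recover the moments $\int_\T \overline{\zeta}^{\,n}\,d\nu(\zeta)$ for $n \geq 1$, while the total mass gives the moment at $n = 0$. Since $\nu$ is real, its negative moments are the conjugates of its positive moments, so $F_\nu$ determines every moment of $\nu$; as the trigonometric polynomials are dense in $C(\T)$, a measure on $\T$ is determined by its moments, and uniqueness follows.

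The step I expect to require the most care is the identification $\Re F = \Re F_\mu$: one must justify both the reproducing identity for the harmonic dilates and the interchange of limit and integral, the latter hinging on the fact that $P_z$ is a fixed continuous test function against which the $\mu_{r_n}$ converge weak-$*$. Everything else is routine bookkeeping.
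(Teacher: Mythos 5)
Your proposal is correct and complete: the weak-$*$ compactness argument on the dilated measures $\Re F(r\zeta)\,dm(\zeta)$ for existence, together with the moment/Fourier-coefficient argument for uniqueness, is the classical proof of the Herglotz representation theorem. The paper itself does not prove this statement --- it cites it as a classical result (to Duren) --- and your argument is essentially the standard one found in that reference, so there is nothing further to reconcile.
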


	If $u$ is inner and $u(0) = 0$, then for each $\alpha$ in $\T$ the function 
	\begin{equation*}
		\frac{1 + \overline{\alpha} u(z)}{1 - \overline{\alpha} u(z)}
	\end{equation*}
	satisfies the hypothesis of Herglotz's theorem.  Indeed, this function is clearly analytic on $\D$,
	evaluates to $1$ at $z = 0$, and satisfies
	\begin{equation*}
		\Re\left(\frac{1 + \overline{\alpha} u(z)}{1 - \overline{\alpha} u(z)}\right) 
		= \frac{1 - |u(z)|^2}{|\alpha - u(z)|^2} > 0.
	\end{equation*}
	By Herglotz' Theorem, there is a unique probability measure $\sigma_{\alpha}$ on $\T$ such that 
	\begin{equation} \label{Clark-meas-defn}
		\frac{1 + \overline{\alpha} u(z)}{1 - \overline{\alpha} u(z)} = \int \frac{\zeta + z}{\zeta - z} \,d\sigma_{\alpha}(\zeta).
	\end{equation}
	The resulting family of measures 
	\begin{equation*}
		\{\sigma_{\alpha}: \alpha \in \T\}
	\end{equation*}
	are called the \emph{Clark measures corresponding to $u$} (see Subsection \ref{SubsectionAC} for their generalization,
	the \emph{Aleksandrov-Clark measures}).
	The following proposition
	summarizes some of their basic properties (see \cite{CMR} for details). 

	\begin{Proposition}\label{sigma-facts}
		For an inner function $u$ satisfying $u(0) = 0$, the corresponding family of Clark measures $\{\sigma_{\alpha}: \alpha \in \T\}$
		satisfy the following.
		\begin{enumerate}\addtolength{\itemsep}{0.5\baselineskip}
			\item $\sigma_{\alpha} \perp m$ for all $\alpha$.
			\item $\sigma_{\alpha} \perp \sigma_{\beta}$ for $\alpha \not = \beta$. 
			\item $\sigma_{\alpha}$ has a point mass at $\zeta$ if and only if $u(\zeta) = \alpha$ and $|u'(\zeta)| < \infty$. Furthermore 
				\begin{equation*}
					\sigma_{\alpha}(\{\zeta\}) = \frac{1}{|u'(\zeta)|}.
				\end{equation*}
			
			\item A carrier\footnote{By a {\em carrier} for $\sigma_{\alpha}$ we mean a Borel set $C\subset \T$ for which $\sigma_{\alpha}(A\cap C)=\sigma_{\alpha}(A)$ for all Borel subsets $A\subset\T$. Carriers are not unique and a carrier is often different from a support. For example, if a measure consists of a dense set of point masses on $\T$, then a carrier would consist of just these point masses, whereas the support would be $\T$. }
				for $\sigma_{\alpha}$ is the set 
				\begin{equation*}
					\left\{\zeta \in \T: \lim_{r \to 1^{-}} u(r \zeta) = \alpha\right\}.
				\end{equation*}
		\end{enumerate}
	\end{Proposition}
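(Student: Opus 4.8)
The plan is to reduce all four assertions to a single master identity. Taking real parts in the defining relation \eqref{Clark-meas-defn} and recalling the Poisson kernel $P_z$, one obtains
\begin{equation*}
	\int_{\T} P_{z}(\zeta)\,d\sigma_{\alpha}(\zeta) = \Re\left(\frac{1 + \overline{\alpha} u(z)}{1 - \overline{\alpha} u(z)}\right) = \frac{1 - |u(z)|^2}{|\alpha - u(z)|^2}, \qquad z \in \D .
\end{equation*}
Thus the Poisson integral of $\sigma_{\alpha}$ is explicitly computable from $u$, and each claim will follow by feeding this formula into a standard fact about the radial boundary behavior of Poisson integrals. Note first that $u(0) = 0$ forces $u$ to be nonconstant, which we use repeatedly.

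For (i), I would use the fact that the absolutely continuous part of $\sigma_{\alpha}$ with respect to $m$ has Radon--Nikodym density equal to the radial limit of its Poisson integral $m$-almost everywhere (Fatou; see \cite{Garnett} and the symmetric-derivative version \eqref{Poisson-sym}). Since $u$ is inner, $|u(\zeta)| = 1$ for $m$-a.e.\ $\zeta$, so the numerator of the master formula tends to $0$; moreover $u = \alpha$ only on an $m$-null set, because $1 - \overline{\alpha} u$ is a nonzero $H^{\infty}$ function (it is not identically zero, as $u$ is nonconstant) and a nonzero Hardy function satisfies $\int_{\T} \log|1 - \overline{\alpha}u|\,dm > -\infty$, hence is nonzero $m$-a.e. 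Therefore the radial limit vanishes $m$-a.e., the absolutely continuous density is $0$, and $\sigma_{\alpha} \perp m$.

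For (iv) I would invoke the companion fact that a positive singular measure is carried by the set where its Poisson integral tends to $+\infty$ radially \cite{Garnett}; since $\sigma_{\alpha} \perp m$ by (i), this set carries $\sigma_{\alpha}$. From the master formula the numerator is at most $1$, so $\int P_{r\zeta}\,d\sigma_{\alpha} \to \infty$ forces $|\alpha - u(r\zeta)| \to 0$, i.e.\ $u(r\zeta) \to \alpha$; a subsequence argument makes this rigorous. Hence $\{\zeta : \int P_{r\zeta}\,d\sigma_{\alpha} \to \infty\} \subseteq C_{\alpha} := \{\zeta \in \T : \lim_{r \to 1^{-}} u(r\zeta) = \alpha\}$, and since the smaller set already carries $\sigma_{\alpha}$, so does $C_{\alpha}$. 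Then (ii) is immediate: for $\alpha \neq \beta$ the carriers $C_{\alpha}$ and $C_{\beta}$ are disjoint because radial limits are unique, whence $\sigma_{\alpha} \perp \sigma_{\beta}$.

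The heart of the matter is (iii). Here I would start from the atomic-recovery formula
\begin{equation*}
	\sigma_{\alpha}(\{\zeta_0\}) = \tfrac{1}{2} \lim_{r \to 1^{-}} (1-r) \int_{\T} P_{r\zeta_0}(\zeta)\,d\sigma_{\alpha}(\zeta),
\end{equation*}
which follows from $(1-r)P_{r\zeta_0}(\zeta) \to 2\chi_{\{\zeta_0\}}(\zeta)$ together with dominated convergence. Substituting the master formula at $z = r\zeta_0$, where $|\zeta_0 - r\zeta_0| = 1 - r$ and $1 - |z|^2 = 1 - r^2$, converts the problem into the nontangential asymptotics of the Julia quotient $(1 - |u|^2)/|\alpha - u|^2$. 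The Julia--Carath\'eodory theorem \cite{JShap} (the equivalences underlying Theorem \ref{TheoremAC}) supplies exactly this: $u$ has an ADC at $\zeta_0$ with $u(\zeta_0) = \alpha$ if and only if this quotient is asymptotic to $|u'(\zeta_0)|^{-1}(1-|z|^2)/|\zeta_0 - z|^2$, which along $z = r\zeta_0$ equals $|u'(\zeta_0)|^{-1}(1+r)/(1-r)$; plugging in gives $\sigma_{\alpha}(\{\zeta_0\}) = 1/|u'(\zeta_0)|$. If $u(\zeta_0) \neq \alpha$ the denominator stays bounded away from $0$ and the limit is $0$, while if the radial limit is $\alpha$ but $|u'(\zeta_0)| = \infty$, Julia's inequality forces the same limit $0$. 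The main obstacle is precisely this case analysis at $\zeta_0$: when $u(\zeta_0) = \alpha$ the limit is genuinely of the form $0/0$, and pinning down the exact constant $1/|u'(\zeta_0)|$ rather than mere positivity is where the full strength of Julia--Carath\'eodory, its sharp equality statement and not just the inequality, is indispensable.
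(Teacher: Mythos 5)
The paper does not actually prove Proposition \ref{sigma-facts}; it simply defers to \cite{CMR}, so there is no in-text argument to compare against. That said, your proof is correct and is the standard one: everything is funneled through the identity $\int_{\T} P_z\,d\sigma_{\alpha} = (1-|u(z)|^2)/|\alpha - u(z)|^2$ obtained by taking real parts in \eqref{Clark-meas-defn}, and each part then follows from a classical fact about boundary behavior of Poisson integrals. Part (i) correctly combines Fatou's theorem (the a.c.\ density is the radial limit of the Poisson integral) with the observation that $1-\overline{\alpha}u$ is a nonzero $H^\infty$ function and hence nonvanishing $m$-a.e.; part (iv) correctly uses that a singular measure lives where its Poisson integral blows up radially, and the bound ``numerator $\leqslant 1$'' does force $u(r\zeta)\to\alpha$ there (in fact no subsequence argument is needed, since the quotient tending to $\infty$ directly gives $|\alpha-u(r\zeta)|^2\leqslant 1/(\text{quotient})\to 0$); (ii) then follows from disjointness of the carriers. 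For (iii), the atomic-recovery formula is right (the kernel $(1-r)P_{r\zeta_0}$ is bounded by $2$ and converges pointwise to $2\chi_{\{\zeta_0\}}$, so dominated convergence applies), and your case analysis via Julia--Carath\'eodory is the correct mechanism; the one step worth spelling out in a written version is the converse implication that a positive point mass \emph{forces} the ADC to exist: from $(1-r)\frac{1-|u(r\zeta_0)|^2}{|\alpha-u(r\zeta_0)|^2}\to 2c>0$ and $|\alpha-u(r\zeta_0)|\geqslant 1-|u(r\zeta_0)|$ one gets $\liminf_{r\to1^-}\frac{1-|u(r\zeta_0)|}{1-r}\leqslant 1/c<\infty$, which is exactly the Julia--Carath\'eodory hypothesis, after which the sharp asymptotics $\frac{1-|u(r\zeta_0)|^2}{1-r^2}\to|u'(\zeta_0)|$ and $\frac{|\alpha-u(r\zeta_0)|}{1-r}\to|u'(\zeta_0)|$ yield the limit $2/|u'(\zeta_0)|$ and hence $\sigma_{\alpha}(\{\zeta_0\})=1/|u'(\zeta_0)|$. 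With that point made explicit, the argument is complete.
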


	This process can also be reversed.  Starting with a singular probability measure $\mu$ on $\T$ (e.g., $\mu = \delta_1$)
	one forms the \emph{Herglotz integral}
	\begin{equation*}
		\qquad H_{\mu}(z) := \int_{\T} \frac{\zeta + z}{\zeta - z} d \mu(\zeta), \quad z \in \D.
	\end{equation*}
	This function has positive real part on $\D$ and satisfies $H(0) = 1$. Thus
	\begin{equation*}
	\qquad u_{\mu}(z) := \frac{H(z) - 1}{H(z) + 1} \tag{$z \in \D$}
	\end{equation*}
satisfies 
$$\Re\left(\frac{1 + u_{\mu}(z)}{1 - u_{\mu}(z)}\right) = \frac{1 - |u_{\mu}(z)|^2}{|1 - u_{\mu}(z)|^2} = \int_{\T} P_{z}(\zeta) d\mu(\zeta).$$
Using the fact that 
$$\lim_{r \to 1^{-}} \int_{\T} P_{r w}(\zeta) d \mu(\zeta) = 0$$
for $m$-almost every $w \in \T$ (since $\mu \perp m$ -- see \eqref{Poisson-sym}) we see that $u_{\mu}$ is inner and $\mu$ is the Clark measure with $\alpha = 1$ corresponding to $u_{\mu}$. 

\subsection{Clark unitary operator}

For an inner function $u$ with $u(0) = 0$, recall that the compressed shift is the operator 
$S_u: \K_u \to \K_u$ defined by $S_u f = P_u(z f)$. For each $\alpha \in \T$ define 
\begin{equation}\label{CUO}
U_{\alpha}: \K_u \to \K_u, \quad U_{\alpha} := S_u + \alpha \Big(1 \otimes \frac{u}{z} \Big),
\end{equation}
where $1 \otimes \frac{u}{z}$ denotes the rank one operator satisfying $(1 \otimes \frac{u}{z})f = \inner{f, \frac{u}{z}}1$
(note that since $u(0) = 0$ the constant function $1$ belongs to $\K_u$). The main theorem here is the following (see \cite{CMR} for details).

\begin{Theorem}[Clark \cite{MR0301534}]\label{Clark-U-T}
For an inner function $u$ with $u(0) = 0$, the operator \eqref{CUO}
is a cyclic unitary operator whose spectral measure is carried by the Borel set 
$$\left\{\zeta \in \T: \lim_{r \to 1^{-}} u(r \zeta) = \alpha\right\}.$$
The eigenvalues of $U_{\alpha}$ are the points $\zeta$ in $\T$ so that $u(\zeta) = \alpha$ and $|u'(\zeta)| < \infty$. 
The corresponding eigenvectors are the boundary kernels $k_{\zeta}$. 
\end{Theorem}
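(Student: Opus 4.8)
The plan is to first reduce $U_\alpha$ to a transparent closed form. Using that $\{u z^n : n \ge 0\}$ is an orthonormal basis of $u H^2$ and that $f \perp u H^2$ for $f \in \K_u$, I would show $S_u f = z f - \big\langle f, \tfrac{u}{z}\big\rangle u$, so that
\[
U_\alpha f = z f + \Big\langle f, \tfrac{u}{z}\Big\rangle (\alpha - u), \qquad U_\alpha^* f = S^* f + \overline{\alpha}\, f(0)\, \tfrac{u}{z},
\]
where the second identity uses $(1\otimes \tfrac u z)^* = \tfrac u z \otimes 1$ and $\langle f,1\rangle = f(0)$ (recall $k_0 \equiv 1$ since $u(0)=0$). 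For unitarity I would record the two rank-one defect identities $I - S_u S_u^* = 1\otimes 1$ and $I - S_u^* S_u = \tfrac u z \otimes \tfrac u z$; the first follows from $S_u S_u^* f = f - f(0)$, and the second from the first since $C$ intertwines $S_u$ and $S_u^*$ (i.e.\ $C S_u C = S_u^*$) and $C 1 = \tfrac u z$. Expanding $U_\alpha^* U_\alpha$ and $U_\alpha U_\alpha^*$ then collapses: the cross terms vanish because $S^* 1 = 0$ and $S_u \tfrac u z = P_u(u) = 0$, while the surviving rank-one pieces cancel the defects, giving $U_\alpha^* U_\alpha = U_\alpha U_\alpha^* = I$.

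Next I would prove that the vector $1$ is cyclic. From the formula for $U_\alpha^*$ one gets $U_\alpha^* 1 = \overline\alpha\, \tfrac u z = \overline\alpha\, S^* u$, and then inductively
\[
U_\alpha^*\big(S^{*n}u\big) = S^{*(n+1)}u + \overline\alpha\,(S^{*n}u)(0)\,S^* u .
\]
Since $S^* u$ lies in the cyclic subspace $M := \bigvee\{U_\alpha^k 1 : k \in \Z\}$, this recursion shows $S^{*n} u \in M$ for every $n \ge 1$, whence $M \supseteq \bigvee\{S^{*n}u : n\ge 1\} = \K_u$. Thus $1$ is a cyclic vector, and the spectral measure of $U_\alpha$ is the scalar measure $\mu$ attached to $1$.

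I would then identify $\mu$ with the Clark measure $\sigma_\alpha$ via a Herglotz computation. Solving $(U_\alpha - w)g = 1$ for $w \in \D$ using the closed form above forces the numerator to vanish at $w$, yielding
\[
g = (U_\alpha - w)^{-1} 1 = \frac{1}{\alpha - u(w)}\cdot \frac{u(z) - u(w)}{z - w},
\]
where the second factor is the difference quotient $C k_w \in \K_u$ of Example~\ref{Example:DQ}. Using $(U_\alpha+w)(U_\alpha-w)^{-1} = I + 2w(U_\alpha-w)^{-1}$ and $\langle g,1\rangle = g(0)$, a short computation gives
\[
\Big\langle (U_\alpha + w)(U_\alpha - w)^{-1} 1,\, 1\Big\rangle = 1 + 2w\,g(0) = \frac{\alpha + u(w)}{\alpha - u(w)} = \frac{1 + \overline\alpha\, u(w)}{1 - \overline\alpha\, u(w)}.
\]
By \eqref{Clark-meas-defn} the right-hand side is the Herglotz integral of $\sigma_\alpha$, so Herglotz's uniqueness theorem (Theorem~\ref{Herglotz}) forces $\mu = \sigma_\alpha$. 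Consequently the spectral measure is carried by $\{\zeta \in \T : \lim_{r\to 1^-} u(r\zeta) = \alpha\}$ by Proposition~\ref{sigma-facts}(iv), and the eigenvalues of the cyclic unitary $U_\alpha$ are exactly the atoms of $\sigma_\alpha$, which by Proposition~\ref{sigma-facts}(iii) are the points $\zeta$ with $u(\zeta) = \alpha$ and $|u'(\zeta)| < \infty$. For such $\zeta$ the boundary kernel $k_\zeta$ lies in $\K_u$ (Theorem~\ref{TheoremAC}), and substituting into $U_\alpha f = zf + \langle f, \tfrac u z\rangle(\alpha - u)$ with $\langle k_\zeta, \tfrac u z\rangle = \overline{(u/z)(\zeta)} = \overline\alpha\,\zeta$ collapses to $U_\alpha k_\zeta = \zeta k_\zeta$; cyclicity then guarantees these are all the eigenvectors.

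The routine part is the unitarity computation. The main obstacles are the two genuinely structural steps: establishing cyclicity of $1$ (handled by the $U_\alpha^*$-recursion together with the identity $\K_u = \bigvee\{S^{*n}u : n \ge 1\}$) and, above all, the resolvent/Herglotz calculation that recognizes the spectral Herglotz function of $1$ as $(1+\overline\alpha u)/(1-\overline\alpha u)$ and hence pins the spectral measure to the Clark measure $\sigma_\alpha$.
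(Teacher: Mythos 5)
Your proof is correct. Be aware that the survey does not actually prove Theorem \ref{Clark-U-T}; it states the result and defers to \cite{CMR}, so there is no internal argument to compare yours against. What you have written is the standard Clark argument, and all of its steps check out: the closed forms $U_\alpha f = zf + \langle f, u/z\rangle(\alpha-u)$ and $U_\alpha^* f = S^*f + \overline{\alpha}\,f(0)\,u/z$, the two rank-one defect identities, the cyclicity of $1$ via the recursion on the vectors $S^{*n}u$, the resolvent formula $(U_\alpha - w)^{-1}1 = (\alpha - u(w))^{-1}\,Ck_w$, and the resulting identification of the scalar spectral measure of $1$ with $\sigma_\alpha$ through \eqref{Clark-meas-defn} and Theorem \ref{Herglotz}. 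Two ingredients deserve to be made explicit rather than implicit. First, the eigenvector computation uses the boundary reproducing identity $\langle u/z, k_\zeta\rangle = (u/z)(\zeta) = \alpha\overline{\zeta}$; this is part of the Ahern--Clark circle of ideas but is not literally contained in the statement of Theorem \ref{TheoremAC} as given, so it merits a sentence of justification (for instance, $k_\lambda \to k_\zeta$ weakly as $\lambda \to \zeta$ nontangentially when $u$ has an ADC at $\zeta$). Second, the cyclicity step rests on the identity $\K_u = \bigvee\{S^{*n}u : n \geqslant 1\}$, which the survey asserts without proof after Proposition \ref{P-H-dense}; it follows quickly from Proposition \ref{Prop:fgzu}, since $f \perp S^{*n}u$ for all $n \geqslant 1$ forces $\overline{u}f$ to be orthogonal to $\overline{z}^{\,n}$ for every $n \geqslant 1$ while $\overline{u}f \in \overline{zH^2}$, whence $f = 0$. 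Finally, the identity $I - S_u^*S_u = \frac{u}{z}\otimes\frac{u}{z}$ can be obtained by the one-line direct computation $S_u^*S_u f = S^*\bigl(zf - \langle f, u/z\rangle u\bigr) = f - \langle f, u/z\rangle \frac{u}{z}$, which avoids appealing to $CS_uC = S_u^*$, a fact the survey only records much later, in the truncated Toeplitz section.
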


\begin{Example}\label{Clark-ex}
	Consider the model space $\K_{z^3} = \bigvee\{1,z,z^2\}$.
	Letting $S_3 := S_{z^3}$, we see that
	\begin{equation*}
		S_3 1 = z, \quad S_3 z = z^2, \quad S_3 z^2 = 0,
	\end{equation*}
	so that the matrix representation of $S_3$ with respect to the basis $\{1, z, z^2\}$ is 
	\begin{equation*}
		\begin{bmatrix}
		 0 & 0 & 0 \\
		 1 & 0 & 0 \\
		 0 & 1 & 0
		\end{bmatrix}.
	\end{equation*}
	The Clark unitary operator in this case is
	$$U_{\alpha} = S_3 + \alpha (1 \otimes z^2).$$ 
	Similarly, since
	$$U_{\alpha} 1 = z, \quad U_{\alpha} z = z^2, \quad U_{\alpha} z^2 = \alpha,$$
	the matrix representation of $U_{\alpha}$ with respect to this basis is 
	\begin{equation*}
		\begin{bmatrix}
		 0 & 0 & \alpha  \\
		 1 & 0 & 0 \\
		 0 & 1 & 0
		\end{bmatrix}.
	\end{equation*}
	The unitarity of $U_{\alpha}$ follows easily from the fact that $|\alpha| = 1$.
	The eigenvalues of $U_{\alpha}$ are precisely the solutions to $u(z) = \alpha$
	(i.e., the three cube roots of $\alpha$) and corresponding eigenvectors are
	the boundary kernels
	\begin{equation*}
		k_{\beta}(z) = \frac{1 - \overline{\beta}^3 z^3}{1 - \overline{\beta} z} = 1 + \overline{\beta} z + \overline{\beta}^2 z^2,
	\end{equation*}
	where $\beta^3 = \alpha$.  Each eigenvector $k_{\beta}$ has norm $\sqrt{3}$, which is precisely the square root of the 
	modulus of the angular derivative (namely $3z^2$) of $z^3$ at the point $\beta$ in $\T$.   
	Moreover, the normalized eigenvectors of $U_{\alpha}$
	form an orthonormal basis for $\K_{z^3}$ (as expected since $U_{\alpha}$ is a unitary operator on a finite dimensional space).
\end{Example}

The Clark theory also gives us a concrete spectral representation for $U_{\alpha}$ via the Clark measure $\sigma_{\alpha}$. 

\begin{Theorem}[Clark \cite{MR0301534}]
For an inner function $u$ with $u(0) =0$, let $\sigma_{\alpha}$ be the unique finite positive Borel measure on $\T$ satisfying 
$$\frac{1 + \overline{\alpha} u(z)}{1 - \overline{\alpha} u(z)} = \int \frac{\zeta + z}{\zeta - z} \,d\sigma_{\alpha}(\zeta).$$ The operator 
$$(V_{\alpha} f)(z) = (1 - \overline{\alpha} u(z)) \int \frac{f(\zeta)}{1 - \overline{\zeta} z} \,d\sigma_{\alpha}(\zeta)$$ is a unitary operator from $L^2(\sigma_{\alpha})$ onto $\K_u$. Furthermore, 
if 
$$Z_{\alpha}: L^2(\sigma_{\alpha}) \to L^2(\sigma_{\alpha}), \quad (Z_{\alpha} f)(\zeta) = \zeta f(\zeta),$$
then $V_{\alpha} Z_{\alpha} = U_{\alpha} V_{\alpha}$.
\end{Theorem}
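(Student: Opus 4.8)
The plan is to realize $V_{\alpha}$ as the adjoint of an explicit isometry defined on reproducing kernels, reducing everything to one Herglotz computation followed by partial fractions. The cornerstone is the identity
\begin{equation*}
\int_{\T}\frac{d\sigma_{\alpha}(\zeta)}{1-\overline{\zeta}z}=\frac{1}{1-\overline{\alpha}u(z)},\qquad z\in\D,
\end{equation*}
which I would extract from the defining relation \eqref{Clark-meas-defn} by writing $\tfrac{\zeta+z}{\zeta-z}=\tfrac{2}{1-\overline{\zeta}z}-1$ and using $\sigma_{\alpha}(\T)=1$. Taking complex conjugates (and $|\zeta|=1$) yields the companion $\int d\sigma_{\alpha}(\zeta)/(1-\overline{\lambda}\zeta)=1/(1-\alpha\overline{u(\lambda)})$, and a one-line rearrangement of $\overline{\zeta}/(1-\overline{\zeta}w)$ gives the auxiliary value $\int d\sigma_{\alpha}(\zeta)/(\zeta-w)=\overline{\alpha}u(w)/\bigl(w(1-\overline{\alpha}u(w))\bigr)$. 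These three evaluations are the only analytic input; everything after is algebra on $\T$.

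Next I would introduce the map $W\colon\K_u\to L^{2}(\sigma_{\alpha})$ determined on kernels by $Wk_{\lambda}=e_{\lambda}$, where $e_{\lambda}(\zeta):=(1-\alpha\overline{u(\lambda)})/(1-\zeta\overline{\lambda})$, and prove the inner-product identity
\begin{equation*}
\inner{e_{\lambda},e_{w}}_{L^{2}(\sigma_{\alpha})}=\frac{1-\overline{u(\lambda)}u(w)}{1-\overline{\lambda}w}=k_{\lambda}(w)=\inner{k_{\lambda},k_{w}}_{H^{2}}.
\end{equation*}
To establish it I expand $\inner{e_{\lambda},e_{w}}$ as a constant multiple of $\int d\sigma_{\alpha}/\bigl((1-\overline{\lambda}\zeta)(1-\overline{\zeta}w)\bigr)$, use $1/(1-\overline{\zeta}w)=\zeta/(\zeta-w)$ to put the integrand in partial fractions, and substitute the three evaluations above; the unimodular factors collapse via $\alpha\overline{\alpha}=1$. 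Since the kernels span a dense subspace of $\K_u$ (Proposition~\ref{P-density-easy}), $W$ extends to an isometry.

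I would then identify $W^{*}$ with $V_{\alpha}$ and upgrade $W$ to a unitary. For $f\in L^{2}(\sigma_{\alpha})$ the reproducing property gives
\begin{equation*}
(W^{*}f)(\lambda)=\inner{W^{*}f,k_{\lambda}}=\inner{f,e_{\lambda}}_{L^{2}(\sigma_{\alpha})}=(1-\overline{\alpha}u(\lambda))\int_{\T}\frac{f(\zeta)}{1-\overline{\zeta}\lambda}\,d\sigma_{\alpha}(\zeta)=(V_{\alpha}f)(\lambda),
\end{equation*}
so $V_{\alpha}=W^{*}$; in particular $V_{\alpha}$ is bounded and maps into $\K_u$. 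Surjectivity of $W$ --- equivalently, that $V_{\alpha}$ is isometric --- is the one step requiring input beyond algebra: the closed span of $\{e_{\lambda}\}$ equals the closure of the analytic polynomials in $L^{2}(\sigma_{\alpha})$, and this exhausts $L^{2}(\sigma_{\alpha})$ precisely because $\sigma_{\alpha}\perp m$ (Proposition~\ref{sigma-facts}(i)), by the classical density theorem for polynomials in $L^{2}$ of a singular measure. I expect this density fact to be the main obstacle; the singularity is genuinely essential, since an absolutely continuous part of $\sigma_{\alpha}$ would obstruct surjectivity. Granting it, $W$ is unitary and hence $V_{\alpha}=W^{*}$ is unitary from $L^{2}(\sigma_{\alpha})$ onto $\K_u$.

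Finally, for $V_{\alpha}Z_{\alpha}=U_{\alpha}V_{\alpha}$ I would pass to adjoints and verify $WU_{\alpha}^{*}=Z_{\alpha}^{*}W$ on the dense set $\{k_{\lambda}\}$. From $S^{*}k_{\lambda}=\overline{\lambda}k_{\lambda}-\overline{u(\lambda)}\tfrac{u}{z}$, together with $U_{\alpha}^{*}=S^{*}|\K_u+\overline{\alpha}\bigl(\tfrac{u}{z}\otimes 1\bigr)$ and $\inner{k_{\lambda},1}=k_{\lambda}(0)=1$, one gets
\begin{equation*}
U_{\alpha}^{*}k_{\lambda}=\overline{\lambda}k_{\lambda}+(\overline{\alpha}-\overline{u(\lambda)})\tfrac{u}{z}.
\end{equation*}
A kernel computation of the same type as before gives $W\bigl(\tfrac{u}{z}\bigr)=\alpha\overline{\zeta}$, whence $WU_{\alpha}^{*}k_{\lambda}=\overline{\lambda}e_{\lambda}+(1-\alpha\overline{u(\lambda)})\overline{\zeta}$; comparing with $Z_{\alpha}^{*}Wk_{\lambda}=\overline{\zeta}e_{\lambda}$ reduces, after clearing $1-\overline{\lambda}\zeta$, to the pointwise identity with numerator $\overline{\lambda}(1-|\zeta|^{2})$, which vanishes on $\T$ and hence $\sigma_{\alpha}$-a.e. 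Taking adjoints of $WU_{\alpha}^{*}=Z_{\alpha}^{*}W$ yields $U_{\alpha}V_{\alpha}=V_{\alpha}Z_{\alpha}$, completing the proof.
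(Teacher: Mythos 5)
Your proposal is correct. Note first that the paper does not actually prove this theorem: it states it, works out the example $u=z^3$, and defers to Clark's original paper and to \cite{CMR}, so there is no in-paper argument to compare against. What you have written is a sound reconstruction of the standard proof. I checked the three Herglotz evaluations (each follows from $\frac{\zeta+z}{\zeta-z}=\frac{2}{1-\overline{\zeta}z}-1$ together with $\sigma_{\alpha}(\T)=1$, which in turn uses $u(0)=0$), the partial-fraction computation giving $\inner{e_{\lambda},e_{w}}_{L^2(\sigma_{\alpha})}=k_{\lambda}(w)$, the identification $W^{*}=V_{\alpha}$ via the reproducing property, and the intertwining computation; all are correct, including the formulas $S^{*}k_{\lambda}=\overline{\lambda}k_{\lambda}-\overline{u(\lambda)}\frac{u}{z}$ and $W\bigl(\frac{u}{z}\bigr)=\alpha\overline{\zeta}$ and the final cancellation via $\overline{\zeta}-\overline{\lambda}=\overline{\zeta}(1-\zeta\overline{\lambda})$ on $\T$. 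You also correctly isolate the one genuinely nontrivial input: surjectivity of $W$ rests on the density of the analytic polynomials in $L^{2}(\mu)$ for a singular measure $\mu$ (Szeg\H{o}/Kolmogorov--Krein), combined with the observation that the closed span of the Cauchy kernels coincides with the closure of the polynomials in $L^{2}(\sigma_{\alpha})$ (the Taylor series of $(1-\overline{\lambda}\zeta)^{-1}$ converges uniformly on $\T$ for fixed $\lambda\in\D$, and the monomials are recovered from the kernels by differentiating in $\overline{\lambda}$). If you wanted to make the write-up fully self-contained you would spell out that last span-equals-polynomials step and cite the singular-measure density theorem precisely, but as a proof outline nothing is missing or wrong.
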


\begin{Example}
Returning to Example \ref{Clark-ex}. Let $\zeta_1, \zeta_2, \zeta_3$ be the three solutions to $u(z) = \alpha$ (i.e., to $z^3 = \alpha$). Then the discrete measure 
$$d \sigma_{\alpha}  = \tfrac{1}{3} \delta_{\zeta1} + \tfrac{1}{3}\delta_{\zeta_2} + \tfrac{1}{3} \delta_{\zeta_3}$$ 
satisfies 
$$\frac{1 + \overline{\alpha} z^3}{1 - \overline{\alpha} z^3} = \int \frac{\zeta + z}{\zeta - z} \,d\sigma_{\alpha}(\zeta).$$
From the theorem above, 
$$V_{\alpha}: L^2(\sigma_{\alpha}) \to \K_{z^3}, 
\quad (V_{\alpha} f)(z) = (1 - \overline{\alpha} z^3) \int \frac{f(\zeta)}{1 - \overline{\zeta}} \,d\sigma_{\alpha}(\zeta).$$ 
To see this worked out, note that if $$f = c_1 \chi_{\zeta_1} + c_2 \chi_{\zeta_2} + c_3 \chi_{\zeta_3,}$$ 
a typical element of $L^2(\sigma_{\alpha})$, then 
$$(V f)(z) = (1 - \overline{\alpha} z^3) \left( c_1 \frac{1/3}{1 - \overline{\zeta_1} z}  +  c_2  \frac{1/3}{1 - \overline{\zeta_2} z}  + c_3  \frac{1/3}{1 - \overline{\zeta_3} z} \right).$$ Since $\zeta_{j}^{3} = \alpha$, one can verify that the expression above is a polynomial of degree at most $2$, which are precisely the elements of $\K_{z^3}$. 
Furthermore, $\{\sqrt{3} \chi_{\zeta_1}, \sqrt{3} \chi_{\zeta_2}, \sqrt{3} \chi_{\zeta_3}\}$ is an orthonormal basis for $L^2(\sigma_{\alpha})$ and 
$$(V_{\alpha} \sqrt{3} \chi_{\zeta_j})(z) = \frac{\sqrt{3}}{3} k_{\zeta_j}(z).$$ But since 
$$\left\{\frac{\sqrt{3}}{3} k_{\zeta_j}(z): j = 1, 2, 3\right\}$$ is an orthonormal basis for $\K_{z^3}$ we see that 
$V_{\alpha}$ is unitary. From here one can easily see, by basic linear algebra, that  $V_{\alpha}$ intertwines $U_{\alpha}$ with $Z_{\alpha}$. 
\end{Example}

The adjoint of the unitary operator $V_{\alpha}$ is of particular interest. We know that $V_{\alpha} f$ belongs to $\K_{u}$ for every $f$ in $L^2(\sigma_{\alpha})$ and, as such, $V_{\alpha} f$ has a radial limit almost everywhere with respect to Lebesgue measure $m$. The following theorem of Poltoratski \cite{MR1223178} says much more. 

\begin{Theorem}[Poltoratski \cite{MR1223178}]\label{Polt-V}
For $f \in L^2(\sigma_{\alpha})$, 
$$\lim_{r \to 1^{-}} (V_{\alpha} f)(r \zeta) = f(\zeta)$$
for $\sigma_{\alpha}$-almost every $\zeta \in \T$. 
\end{Theorem}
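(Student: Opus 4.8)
The plan is to recognize $V_{\alpha}f$ as a \emph{normalized Cauchy transform} and then prove the almost-everywhere convergence by the classical route of combining convergence on a dense class with a maximal inequality, the essential twist being that the maximal inequality must be taken with respect to the \emph{singular} measure $\sigma_{\alpha}$ itself.

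First I would rewrite the operator in a more transparent form. Using $\frac{\zeta+z}{\zeta-z} = \frac{2}{1-\overline{\zeta}z}-1$ together with $\sigma_{\alpha}(\T)=1$, the defining relation \eqref{Clark-meas-defn} collapses to
\[
	\int_{\T} \frac{d\sigma_{\alpha}(\zeta)}{1-\overline{\zeta}z} = \frac{1}{1-\overline{\alpha}u(z)} .
\]
Writing $(K\nu)(z):=\int_{\T}(1-\overline{\zeta}z)^{-1}\,d\nu(\zeta)$ for the Cauchy transform of a finite measure $\nu$, the operator becomes the quotient
\[
	(V_{\alpha}f)(z) = \frac{K(f\sigma_{\alpha})(z)}{K(\sigma_{\alpha})(z)} .
\]
Thus the theorem asserts that the ratio of the Cauchy transforms of $f\sigma_{\alpha}$ and of $\sigma_{\alpha}$ tends radially to the Radon--Nikodym derivative $f = d(f\sigma_{\alpha})/d\sigma_{\alpha}$ at $\sigma_{\alpha}$-almost every point; this is a differentiation statement tuned to the singular measure $\sigma_{\alpha}$.

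Next I would deploy the standard two-step mechanism for an a.e.\ convergence theorem. Let $(Mf)(\zeta):=\sup_{0<r<1}|(V_{\alpha}f)(r\zeta)|$. It suffices to (a) produce a dense subset $\mathcal{D}\subset L^2(\sigma_{\alpha})$ on which the radial limit exists and equals $f$ $\sigma_{\alpha}$-a.e., and (b) prove a weak-type maximal bound $\sigma_{\alpha}(\{Mf>t\})\leqslant Ct^{-2}\norm{f}_{L^2(\sigma_{\alpha})}^2$ (with the weak-$(1,1)$ analogue on $L^1(\sigma_{\alpha})$). Granting (a) and (b), the usual Banach-principle argument propagates convergence from $\mathcal{D}$ to all of $L^2(\sigma_{\alpha})$. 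For $\mathcal{D}$ the natural choice is the linear span of the Cauchy kernels $\zeta\mapsto(1-\overline{\lambda}\zeta)^{-1}$, $\lambda\in\D$, which coincides with the analytic polynomials and is dense in $L^2(\sigma_{\alpha})$ by Szeg\H{o}'s theorem precisely because $\sigma_{\alpha}\perp m$. On this class $K(f\sigma_{\alpha})$ is explicit enough that the delicate $0\cdot\infty$ cancellation between the vanishing factor $1-\overline{\alpha}u(r\zeta)$ and the blow-up of $K(f\sigma_{\alpha})$ on the carrier can be evaluated directly; the model case $f\equiv 1$, where $V_{\alpha}1=(1-\overline{\alpha}u)\,K\sigma_{\alpha}\equiv 1$, already exhibits the mechanism.

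The hard part will be (b): a maximal inequality relative to $\sigma_{\alpha}$. Since singular measures are generally non-doubling, the Hardy--Littlewood machinery is unavailable, and this is exactly where the analytic rigidity must be exploited. The plan is to use the identity $K\sigma_{\alpha}=(1-\overline{\alpha}u)^{-1}$, which ties the denominator to the single inner function $u$, so that the size of $|1-\overline{\alpha}u(r\zeta)|$ simultaneously governs the normalization and, through the Poisson-type representation $\Re\frac{1+\overline{\alpha}u}{1-\overline{\alpha}u}=\int P_{z}\,d\sigma_{\alpha}$, the distribution function of $Mf$ with respect to $\sigma_{\alpha}$. I would bound $Mf$ by a $\sigma_{\alpha}$-averaged maximal function of $f$ plus an oscillation error, and then transfer the level-set estimate to $\sigma_{\alpha}$ using this representation. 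Controlling the oscillation error off a set of small $\sigma_{\alpha}$-measure is the crux: it is what forces one to invoke the boundary structure of $u$ rather than generic measure theory, and it is the step I expect to absorb the bulk of the work.
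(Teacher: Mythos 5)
The paper offers no proof of this theorem --- it is quoted from Poltoratski's paper \cite{MR1223178} --- so there is no in-text argument to compare against; your plan does, however, follow the strategy of the original source. The preparatory material is correct: from \eqref{Clark-meas-defn} and $\sigma_{\alpha}(\T)=1$ one does get $K\sigma_{\alpha}=(1-\overline{\alpha}u)^{-1}$ and hence $V_{\alpha}f=K(f\sigma_{\alpha})/K\sigma_{\alpha}$, and step (a) genuinely works on the span of Cauchy kernels: a partial-fraction computation gives
\begin{equation*}
	(V_{\alpha}c_{\lambda_0})(z)=\frac{1}{1-\overline{\lambda_0}z}\left[\frac{1-\overline{\alpha}u(z)}{1-\alpha\overline{u(\lambda_0)}}+\overline{\alpha}u(z)\right],
\end{equation*}
which tends radially to $c_{\lambda_0}(\zeta)$ at every $\zeta$ with $\lim_{r\to1^-}u(r\zeta)=\alpha$, i.e.\ $\sigma_{\alpha}$-a.e.\ by Proposition \ref{sigma-facts}. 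Density of this class in $L^2(\sigma_{\alpha})$ via Szeg\H{o}'s theorem (using $\sigma_{\alpha}\perp m$) is also fine.

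The genuine gap is step (b). The weak-type bound $\sigma_{\alpha}(\{Mf>t\})\leqslant Ct^{-2}\norm{f}_{L^2(\sigma_{\alpha})}^2$ is not a technical lemma one can wave at --- it \emph{is} the substance of Poltoratski's theorem, and your proposal only names it. The sketch offered (``bound $Mf$ by a $\sigma_{\alpha}$-averaged maximal function plus an oscillation error, then transfer the level-set estimate via the Poisson representation'') does not identify a mechanism that survives the non-doubling of $\sigma_{\alpha}$: there is no covering lemma available, the ``$\sigma_{\alpha}$-averaged maximal function'' is not defined, and it is not explained how the identity $K\sigma_{\alpha}=(1-\overline{\alpha}u)^{-1}$ converts a bound on $|K(f\sigma_{\alpha})|/|K\sigma_{\alpha}|$ into a bound on the $\sigma_{\alpha}$-measure of a level set. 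Poltoratski's actual argument for the weak-type inequality is a delicate analysis of the level sets of the Cauchy transform (and later simplifications still require real work, e.g.\ via the Herglotz function attached to $1/K\mu$); none of that is present here. Without (b), a.e.\ convergence cannot be propagated from the dense class --- norm density alone gives nothing pointwise --- so the argument does not close. As written, this is an accurate map of where the difficulty lives, not a proof that crosses it.
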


The significance of this result is that it says that functions in model spaces (i.e., the functions $V_{\alpha} f$ for $f \in L^2(\sigma_{\alpha})$ in the previous theorem) have nontangential limits on finer sets than generic functions from $H^2$. We have seen a result along these lines already in Theorem \ref{TheoremAC}. The previous theorem says a bit more in that all functions from $\K_u$ have non-tangential limits $\sigma_{\alpha}$ almost everywhere. This is especially notable since $\sigma_{\alpha} \perp m$ and a classical result \cite{Lusin, Piranian} says that if $E$ is any closed subset of $\T$ with Lebesgue measure zero, then there exists an $f$ in $H^2$ (which can be taken to be inner) whose radial limits do not exists on $E$. For example, if $\sigma_{\alpha}$ has an isolated point pass at $\zeta$, then every function in $\K_u$ has a nontangential limit at $\zeta$. In this particular case, the result should not be surprising since, by Proposition \ref{sigma-facts}, $u$ will have a finite angular derivative at $\zeta$ and so by the Ahern-Clark result (Theorem \ref{TheoremAC}) every function in $\K_u$ has a nontangential limit at $\zeta$.

Here is one more fascinating and useful gem about Clark measures due to Aleksandrov \cite{MR931885} (see also \cite{CMR}).  

\begin{Theorem}[Aleksandrov \cite{MR931885}]\label{ADC}
Let $u$ be an inner function with associated family of Clark measures $\{\sigma_{\alpha}: \alpha \in \T\}$.  If $f \in C(\T)$, then
$$\int_{\T} \left(\int_{\T} f(\zeta) d\sigma_{\alpha}(\zeta)\right) dm(\alpha) = \int_{\T} f(\xi)\, dm(\xi).$$
\end{Theorem}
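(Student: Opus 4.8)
The plan is to reduce the identity to the single case where $f$ is a Poisson kernel and then bootstrap to all of $C(\T)$ by density and a uniform mass bound. The whole content of the theorem sits in one observation: when $f = P_z$, the inner integral $\int_{\T} f\, d\sigma_{\alpha}$ is itself a Poisson kernel, but of the interior point $u(z)$ evaluated at the boundary point $\alpha$. Concretely, taking real parts in the defining relation \eqref{Clark-meas-defn} (which persists for general $u$ once one allows $\sigma_{\alpha}$ to be a finite positive measure of total mass $(1-|u(0)|^2)/|\alpha-u(0)|^2$) gives, for each fixed $z \in \D$,
$$\int_{\T} P_z(\zeta)\, d\sigma_{\alpha}(\zeta) = \Re\,\frac{1 + \overline{\alpha} u(z)}{1 - \overline{\alpha} u(z)} = \frac{1 - |u(z)|^2}{|\alpha - u(z)|^2} = P_{u(z)}(\alpha).$$

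First I would integrate this over $\alpha \in \T$ and invoke the elementary fact that $\int_{\T} P_w\, dm = 1$ for every $w \in \D$, applied with $w = u(z)$:
$$\int_{\T} \left( \int_{\T} P_z\, d\sigma_{\alpha} \right) dm(\alpha) = \int_{\T} P_{u(z)}(\alpha)\, dm(\alpha) = 1 = \int_{\T} P_z\, dm.$$
Thus the desired formula holds whenever $f = P_z$, and hence, by linearity, whenever $f$ lies in the linear span $\mathcal{P}$ of $\{P_z : z \in \D\}$. For such $f$ the function $\alpha \mapsto \int_{\T} f\, d\sigma_{\alpha}$ is a finite linear combination of the continuous functions $\alpha \mapsto P_{u(z)}(\alpha)$, hence continuous, so the outer integral is unambiguous.

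Next I would upgrade from $\mathcal{P}$ to all of $C(\T)$ using two facts: (i) $\mathcal{P}$ is dense in $C(\T)$, and (ii) the masses $\sigma_{\alpha}(\T)$ are uniformly bounded in $\alpha$. For (i), by the duality $C(\T)^{*} = M(\T)$ it suffices to show that a complex measure $\nu$ with $\int_{\T} P_z\, d\nu = 0$ for all $z \in \D$ must vanish; but $z \mapsto \int_{\T} P_z\, d\nu$ is the Poisson extension of $\nu$, so its vanishing forces every Fourier--Stieltjes coefficient of $\nu$ to be zero, whence $\nu = 0$. For (ii), since $\alpha \in \T$ and $u(0) \in \D$ we have $|\alpha - u(0)| \geqslant 1 - |u(0)|$, so taking $z = 0$ in the first display gives $\sigma_{\alpha}(\T) = P_{u(0)}(\alpha) \leqslant (1 + |u(0)|)/(1 - |u(0)|) =: M$ for all $\alpha$.

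Finally, given arbitrary $f \in C(\T)$, I would choose $f_n \in \mathcal{P}$ with $\|f_n - f\|_{\infty} \to 0$. The bound (ii) yields $\bigl|\int_{\T} f_n\, d\sigma_{\alpha} - \int_{\T} f\, d\sigma_{\alpha}\bigr| \leqslant M \|f_n - f\|_{\infty}$ uniformly in $\alpha$, so the continuous functions $\alpha \mapsto \int_{\T} f_n\, d\sigma_{\alpha}$ converge uniformly; in particular $\alpha \mapsto \int_{\T} f\, d\sigma_{\alpha}$ is continuous, and its $m$-integral is the limit of $\int_{\T}(\int_{\T} f_n\, d\sigma_{\alpha})\, dm(\alpha) = \int_{\T} f_n\, dm$. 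Passing to the limit on both sides gives the claim. The main obstacle is not any single estimate but the bookkeeping of this last step: one must first guarantee that $\alpha \mapsto \int_{\T} f\, d\sigma_{\alpha}$ is measurable (indeed continuous) so that the outer integral makes sense, and it is precisely the uniform mass bound (ii) that both supplies this regularity and legitimizes the interchange of limit and integration.
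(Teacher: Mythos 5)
Your proposal is correct and follows essentially the same route as the paper: verify the identity for Poisson kernels by taking real parts of the Clark defining relation, extend by linearity, and pass to all of $C(\T)$ by density of the span of Poisson kernels together with a uniform mass bound ensuring continuity of $\alpha \mapsto \int_{\T} f\,d\sigma_{\alpha}$ and the interchange of limit and integral. Your explicit bound $\sigma_{\alpha}(\T) = P_{u(0)}(\alpha) \leqslant (1+|u(0)|)/(1-|u(0)|)$ is a slightly more careful version of the paper's normalization $u(0)=0$ (which makes each $\sigma_{\alpha}$ a probability measure), but the argument is the same.
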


\begin{proof}
For a fixed $z \in \D$ notice that if 
$P_{z}(\zeta)$ is the Poisson kernel, then taking real parts of both sides of \eqref{Clark-meas-defn} we get 
\begin{align*}
\int_{\T} \left( \int_{\T} P_{z}(\zeta) d\sigma_{\alpha}(\zeta)\right) dm(\alpha) & = \int_{\T} \left( \frac{1 - |u(z)|^2}{|\alpha - u(z)|^2}\right) dm(\alpha)\\
& = \int_{\T} P_{u(z)}(\alpha) dm(\alpha)\\
& = 1\\
& = \int_{\T} P_{z}(\zeta) dm(\zeta).
\end{align*}
Thus the theorem works for finite linear combinations of Poisson kernels. Using the fact that finite linear combinations of Poisson kernels are dense in $C(\T)$ one can use a limiting argument to get the general result\footnote{See \cite[Ch.~9]{CMR} for the details on the density of the linear span of the Poisson kernels in $C(\T)$ as well as the completion of this limiting argument}.
\end{proof}

\begin{Remark}
Aleksandrov \cite{MR931885}
showed that $C(\T)$ can be replaced by $L^1$ in the preceding disintegration theorem. There are a few technical issues to work out here. For example,  the inner integrals
$$\int f(\zeta) d\sigma_{\alpha}(\zeta)$$
 in the disintegration formula do not seem to be well defined for $L^1$ functions since indeed the measure $\sigma_{\alpha}$ can contain point masses on $\T$ while $L^1$ functions are defined merely  $m$-almost everywhere. However, amazingly, the function 
 $$\alpha \mapsto \int f(\zeta) d\sigma_{\alpha}(\zeta)$$
 is defined for $m$-almost every $\alpha$ and is integrable. An argument with the monotone class theorem is used to prove this more general result. See also \cite{CMR} for the details. 
\end{Remark}

\subsection{Aleksandrov-Clark measures}\label{SubsectionAC} 
The alert reader will have noticed that the title of this section was Aleksandrov-Clark measures but we seem to have only discussed {\em Clark} measures. As it turns out, one can still examine 
the equation 
$$\frac{1 + \overline{\alpha} u(z)}{1 - \overline{\alpha} u(z)} = \int \frac{\zeta + z}{\zeta - z} d\sigma_{\alpha}(\zeta)$$
but where $u$ belongs to $H^{\infty}$ and $\|u\|_{\infty} \leqslant 1$, but $u$ is not necessarily inner. The measures $\sigma_{\alpha}$ will no longer be singular but they do form an interesting class of measures explored by Aleksandrov and others (see \cite{CMR} for references). One can identify all the parts of these measures (absolutely continuous, singular continuous, point masses, etc.) as well as develop a decomposition theorem as before. Versions of these measures appear in mathematical physics through some papers of B. Simon and T. Wolff \cite{Simon-Wolff, Simon}.

\section{Completeness problems}\label{CP}

In Subsection \ref{Subsection:Riesz}, we discussed circumstances under which a sequence 
	$\{k_{\lambda}\}_{\lambda \in \Lambda}$ of kernel functions forms a Riesz basis for $\K_u$ 
	(see Theorem \ref{Theorem:Interpolation}).  More generally, one can ask the following question:
	If $\Lambda$ is a sequence in $\D$, when does
	\begin{equation*}
		K_{\Lambda} := \bigvee_{\lambda \in \Lambda} k_{\lambda} = \K_u?
	\end{equation*}
Such problems are known as {\em completeness problems}. 

	By considering orthogonal complements and the reproducing property of $k_{\lambda}$, we see that $K_{\Lambda} = \K_u$ precisely when there are no nonzero 
	functions in $\K_u$ that vanish on $\Lambda$.  
	We have already seen that $K_{\Lambda} = \K_u$ if $\Lambda$ is not a Blaschke sequence or if $\Lambda$ has an accumulation point 
	in $\D$ (Proposition \ref{P-density-easy}).  A little further thought shows that the same holds 
	if $\Lambda$ has an accumulation point in $\T \setminus \sigma(u)$ since otherwise there would exist a nonzero function in $\K_u$
	whose analytic continuation vanishes on a set having a limit point in its domain (Proposition \ref{P-analytic-continuation}).	For general Blaschke sequences, the situation is more complicated. However, we can rephrase the completeness 
	problem in terms of Toeplitz kernels\footnote{Kernels of Toeplitz operators have been studied before and we refer the reader to the 
	papers \cite{MR1736197, CBSTK, MR1300218} for further details.}. 

	\begin{Proposition}
		If $\Lambda$ is a Blaschke sequence in $\D$ and 
		$B$ is the Blaschke product corresponding to $\Lambda$, then for any inner function $u$ we have 
		\begin{equation*}
			K_{\Lambda} \not = \K_u \quad\iff\quad \ker T_{\overline{u} B} \not = \{0\}.
		\end{equation*}
	\end{Proposition}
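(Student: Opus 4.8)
The plan is to collapse the whole statement onto the single operator identity $\K_u = \ker T_{\overline{u}}$ and then to transport the problem across $B$ by multiplication. First I would recall, from the computation in the proof of Proposition \ref{Prop:fgzu}, that for $f \in H^2$ one has $f \in \K_u$ if and only if $\overline{u}f \perp H^2$, i.e.\ $P(\overline{u}f)=0$; in operator language this is exactly $\K_u = \ker T_{\overline{u}}$. Next I would record the reformulation already noted in the text preceding the proposition: since $\inner{f,k_{\lambda}} = f(\lambda)$, a function $f \in \K_u$ is orthogonal to $K_{\Lambda}$ precisely when $f$ vanishes on $\Lambda$. Thus $K_{\Lambda} \neq \K_u$ if and only if there is a nonzero $f \in \K_u$ vanishing on $\Lambda$, and the task becomes matching such $f$ with nonzero elements of $\ker T_{\overline{u}B}$.

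The core of the argument is then a bijection implemented by multiplication by $B$. Given a nonzero $f \in \K_u$ vanishing on $\Lambda$, the canonical factorization of $H^2$ functions shows $B$ divides $f$, so $g := f/B$ lies in $H^2$ and is nonzero; a one-line computation $T_{\overline{u}B}\,g = P(\overline{u}Bg) = P(\overline{u}f) = T_{\overline{u}}f = 0$ places $g$ in $\ker T_{\overline{u}B}\setminus\{0\}$. Conversely, given nonzero $g \in \ker T_{\overline{u}B}$, I set $f := Bg \in H^2$ (legitimate since $B \in H^{\infty}$); then $P(\overline{u}f) = P(\overline{u}Bg) = T_{\overline{u}B}\,g = 0$, so $f \in \ker T_{\overline{u}} = \K_u$, and $f$ inherits the zeros of $B$ on $\Lambda$ while remaining nonzero because $B$ is inner. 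Since multiplication by $B$ is injective on $H^2$, this sets up the desired correspondence and yields both implications at once.

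The hard part will be justifying the passage from ``$f$ vanishes on the sequence $\Lambda$'' to ``$B$ divides $f$'' rather than the bookkeeping identity $T_{\overline{u}}(Bg)=T_{\overline{u}B}\,g$, which is purely formal. When $\Lambda$ carries multiplicities one must read $K_{\Lambda}$ as the closed span of the appropriate \emph{derivative} reproducing kernels, so that $f \perp K_{\Lambda}$ forces $f$ to vanish to the order prescribed by $B$; with that convention the Blaschke factor of $f$ is divisible by $B$ and $f/B$ stays in $H^2$. Once this divisibility is in hand, the remaining steps are routine, and the equivalence follows.
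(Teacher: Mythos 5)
Your proof is correct and follows essentially the same route as the paper's: both reduce $K_{\Lambda} \neq \K_u$ to the existence of a nonzero $f \in \K_u$ vanishing on $\Lambda$, factor $f = Bg$, and use the identification $\K_u = \ker T_{\overline{u}}$ together with $T_{\overline{u}B}g = P(\overline{u}Bg) = P(\overline{u}f)$ to pass back and forth. Your explicit attention to multiplicities (reading $K_{\Lambda}$ via derivative reproducing kernels so that $f \perp K_{\Lambda}$ genuinely forces $B \mid f$) is a point the paper glosses over with the phrase ``this happens precisely when $f = gB$.''
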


	\begin{proof}
		Note that if $K_{\Lambda} \neq \K_u$, then there is a nonzero function $f$ in $\K_u$ that vanishes on $\Lambda$. 
		This happens precisely when $f = g B$ for some $g$ in $H^2$ that is not identically zero. 
		But since $f$ belongs to $\K_u$ we know that $\overline{u} f$ is contained in $\overline{z H^2}$. 
		This means that $\overline{u} B g$ belongs to $\overline{z H^2}$. Now apply the Riesz projection 
		(of $L^2$ onto $H^2$) to $\overline{u} B g$ to see that $T_{\overline{u} B}g = P(\overline{u}Bg) = 0$. 
		For the other direction, simply reverse the argument. 
	\end{proof}

	Clark's approach in \cite{MR0301534} to the completeness problem is based on the following related approximation problem of Paley-Wiener.
	Suppose that $\{x_n\}_{n \geqslant 1}$ is an orthonormal basis for a Hilbert space $\mathcal{H}$ and $\{y_n\}_{n \geqslant 1}$ is a sequence in $\mathcal{H}$. 
	If these sequences are {\em close} in some way, does $\{y_n\}_{n \geqslant 1}$ span $\mathcal{H}$? There are various results which say this is often the case (see \cite[Sec.~86]{MR0071727} or \cite{N1} for a survey of these results) with various interpretations of the term {\em close}. We will see a specific example of this below. 

Clark's approach to the completeness problem $K_{\Lambda} = \K_u$ was to find an orthonormal basis for $\K_u$ consisting of (normalized) boundary kernel functions. One way to accomplish this is to find a unitary operator $U$ on $\K_u$ whose spectrum consists only of eigenvalues (pure point spectrum) and such that the eigenvectors of $U$ are boundary kernel functions. But indeed we have already seen this can be done. From Clark's theorem (Theorem \ref{Clark-U-T}) we see that $U_{\alpha}$ is unitary and, under the right spectral circumstances (i.e., the corresponding Clark measure $\sigma_{\alpha}$ is purely atomic), its normalized eigenvectors
\begin{equation*}
\left\{\frac{k_{\zeta}}{\sqrt{|u'(\zeta)|}}: u(\zeta) = \alpha, |u'(\zeta)| < \infty\right\}
\end{equation*}
form an orthonormal basis for $\K_{u}$. From here one can apply a Paley-Wiener type result to obtain completeness results for the family $\{k_{\lambda}: \lambda \in \Lambda\}$. Let us give two illustrative examples from the original Clark paper \cite{MR0301534}. 

\begin{Example} Suppose that $u$ is an inner function and $\alpha \in \T$ such that the corresponding Clark measure $\sigma_{\alpha}$ is discrete, i.e., 
$$\sigma_{\alpha} = \sum_{n = 1}^{\infty} \frac{1}{|u'(\zeta_n)|} \delta_{\zeta_n}.$$
Note the use of Proposition \ref{sigma-facts} here where $\{\zeta_n\}_{n \geqslant 1}$ are the solutions to $u = \alpha$. Since $\sigma_{\alpha}$ is a discrete measure and $U_{\alpha}$ is unitary, the normalized eigenvectors 
$$h_{\zeta_n} := \frac{k_{\zeta_n}}{\sqrt{|u'(\zeta_n)|}}$$
are an orthonormal basis for $\K_u$.  For our given sequence 
$\Lambda = \{\lambda_n\}_{n \geqslant 1} \subset \D$, let 
$$h_{\lambda_n} := \frac{k_{\lambda_n}}{\|k_{\lambda_n}\|}$$
be the normalized kernel functions. If the sequence $\{\lambda_n\}_{n \geqslant 1}$ satisfies 
$$\sum_{n = 1}^{\infty} \|h_{\zeta_n} - h_{\lambda_n}\|^{2} < 1,$$ then a generalization of a theorem of Paley and Wiener (see \cite[Sec.~86]{MR0071727}) says that $\{h_{\lambda_{n}}\}_{n \geqslant 1}$ forms an unconditional basis for $\K_u$, meaning that every $f$ in $\K_u$ has an expansion $f = \sum_{n} a_n h_{\lambda_n}$. 
\end{Example}

This next example brings in some earlier work of Sarason \cite{MR0192355}. 

\begin{Example}
Consider the model space $\K_u$ with inner function 
$$u(z) := \exp\left(\frac{z + 1}{z - 1}\right).$$ The Clark measure $\sigma_{1}$ corresponding to $u$ has the set $\{u = 1\}$ as its carrier, which turns out to be the discrete set 
\begin{equation*}
\zeta_n := \frac{1 + 2 \pi i n}{1 - 2 \pi i n}. \tag{$n \in \Z$}
\end{equation*}
In \cite{MR0192355} it is shown that the operator 
$$[W f](z) :=  \frac{\sqrt{2}}{1 - z} \int_{0}^{1} f(t) u^{t}(z) dt$$
defines unitary operator which maps $\K_u$ onto $L^2[0, 1]$. In our current setting, the real usefulness of this operator comes from the formula 
$$W(e^{i \gamma t}) = q k_{\lambda}, \quad \gamma := i \frac{1 + \overline{\lambda}}{1 - \overline{\lambda}},$$
where $|\lambda| \leqslant 1$ and $q$ is a constant. In particular, this means that $W$ maps each orthonormal basis element $e^{2 \pi i n t}$ for $L^2[0, 1]$ to a constant times $k_{\zeta_n}$. So, for a given a sequence $\{\lambda_n\}_{n \in \Z} \subset \D$ satisfying  
$$\max_{n \in \Z} |\gamma_n - 2 \pi n| < \frac{\log 2}{ \pi},$$
a theorem of Paley and Wiener (see \cite[Sec.~86]{MR0071727}) says that $\{e^{i \gamma_n t}\}_{n \in \Z}$ forms an unconditional basis for $L^{2}[0, 1]$. Under this criterion, we get that the normalized kernels $\{k_{\lambda_n}/\|k_{\lambda_n}\|: n \in \Z\}$ form an unconditional basis for $\K_u$. 

We will consider other unitary operators that transfer orthonormal bases of $L^2$ to kernel functions in the next section. 
\end{Example}

We also point out a recent result of Baranov and Dyakonov \cite{BD} that also discusses when a perturbation of Clark bases form Riesz bases.

\section{Model spaces for the upper-half plane}

\subsection{The Hardy space again} Let $$\C_{+} := \{z \in \C: \Im z > 0 \}$$ denote the upper half plane and let $\HH^{2}$ denote the {\em Hardy space of the upper half plane}. These are the analytic functions $f$ on $\C_{+}$ for which 
$$\sup_{y > 0} \int_{-\infty}^{\infty} |f(x + i y)|^2 \,dx < \infty.$$
The reader will immediately notice the analogue of the ``bounded integral mean'' condition \eqref{eq:HpNorm} from the Hardy space $H^2$ on the unit disk. As it turns out, the majority of the theory from $H^2$ (on the disk) transfers over {\em mutatis mutandis} to $\HH^2$. For example, every $f$ in $\HH^2$ has an almost everywhere well-defined `'radial'' boundary function 
$$f(x) := \lim_{y \to 0^{+}} f(x + i y)$$ and this function belongs to $L^2(dx):= L^2(\R,dx)$. Moreover, 
$$\int_{-\infty}^{\infty} |f(x)|^2 dx = \sup_{y > 0} \int_{-\infty}^{\infty} |f(x + i y)|^2 dx.$$ 
This allows us to endow $\HH^2$ with an inner product 
$$\langle f, g \rangle := \int_{-\infty}^{\infty} f(x) \overline{g(x)} dx,$$
where, in a manner that is analogous to the $H^2$ case, $f(x)$ and $g(x)$ are the almost everywhere defined boundary functions of $f, g$ in 
$\HH^2$. 
Still further, equating $H^2$ with the functions in $L^2(\T,m)$ whose negatively indexed Fourier coefficients vanish, 
we have the corresponding characterization
\begin{equation*}
\HH^2 := \big\{\widehat{f}: f \in L^2(0, \infty)\big\},
\end{equation*}
where $$\widehat{f}(t) = \int_{-\infty}^{\infty} f(x) e^{-2 \pi i x t} \,dx$$ is the {\em Fourier transform} of $f$. 

The reason that most of the Hardy space theory on the disk can be imported into $\HH^2$ is because of the fact that the operator 
$$\mathcal{U}: L^2(m) \to L^2(dx), \quad (\mathcal{U} f)(x) := \frac{1}{\sqrt{\pi} (x + i)}{f(w(x))},$$
where 
$$w(z) := \frac{z - i}{z + i}$$ is a conformal map from $\C_{+}$ onto $\D$, is a unitary map from $L^2(\T,m)$ onto $L^2(\R,dx)$. 
In particular, $\mathcal{U}$ maps $H^2$ unitarily onto $\HH^2$ and maps $L^{\infty}(\T,m)$ isometrically onto $L^{\infty}(\R,dx)$. 

\subsection{Inner functions} 
We say that an analytic function $\Theta$ on $\C_{+}$ is {\em inner} if $|\Theta(z)| \leqslant 1$ for all $z \in \C_{+}$ and if
the almost everywhere defined boundary function $\Theta(x)$ is unimodular almost everywhere on $\R$. 
The two most basic types of inner functions on $\C_+$ are 
\begin{equation*}
	S^{c}(z) := e^{i c z} \tag{$c > 0$}
\end{equation*}
and
\begin{equation*} 
b_{\lambda}(z) := \frac{z - \lambda}{\,z - \overline{\lambda}\,}. \tag{$\lambda \in \C_{+}$}
\end{equation*}
The first class of examples are the most basic type of {\em singular inner function} on $\C_+$ and the second type 
is the most basic type of {\em Blaschke product}. 

Further examples of singular inner functions are given by 
$$S_{\mu, c}(z) := e^{i c z} \exp\left(-\frac{1}{\pi i} \int_{-\infty}^{\infty}\left(\frac{1}{x - z}  - \frac{x}{1 + x^2}\right)d \mu(x)\right),$$
where $c > 0$ and $\mu$ is a positive measure on $\R$ that is singular with respect to Lebesgue measure and 
is {\em Poisson finite}, in the sense that
$$\int_{-\infty}^{\infty} \frac{1}{1 + x^2} d\mu(x) < \infty.$$
The general Blaschke product is formed by specifying its zeros 
$$\Lambda = \{\lambda_{n}\}_{n \geqslant 1} \subset \C_{+} \backslash \{i\}$$ and forming the product 
$$B_{\Lambda}(z) = b_{i}(z)^{m}\prod_{n \geqslant 1} \epsilon_{n} b_{\lambda_n}(z),$$ where the constants 
$$\epsilon_n := \frac{|\lambda_{n}^{2} + 1|}{\lambda_{n}^{2} + 1}$$ are chosen so that $\epsilon_n b_{\lambda_n}(i) > 0$. It is well known \cite{Garnett} that the product above converges uniformly on compact subsets of $\C_{+}$ to an inner function if and only if the zeros $\{\lambda_n\}_{n \geqslant 1}$ satisfy the \emph{Blaschke condition}
$$\sum_{n \geqslant 1} \frac{\Im \lambda_n}{1 + |\lambda_n|^2} < \infty.$$ 
Every inner function $\Theta$ can be factored uniquely as 
\begin{equation*}
\Theta = e^{i \gamma} B_{\Lambda} S_{\mu, c} \tag{$\gamma \in [0, 2 \pi)$}
\end{equation*}

\subsection{Model spaces} 
For an inner function $\Theta$ define a {\em model space} $\KK_{\Theta}$ on the upper-half plane as 
$$\KK_{\Theta} := \HH^2 \ominus \Theta \HH^2.$$ 
Much of the theory for the model spaces $\K_u$ on the disk carries over to the upper-half plane. 
For instance, via Proposition \ref{Prop:fgzu}, we can regard $\KK_{\Theta}$ as a space of boundary functions on $\R$ by noting that
\begin{equation} \label{K-line}
\KK_{\Theta} := \HH^2 \cap \Theta \overline{\HH^2}.
\end{equation}  

An inner function $\Theta$ has an analytic continuation across $\R \setminus \sigma(\Theta)$, where 
$$\sigma(\Theta) := \left\{z \in \C_{+}^{-}: \liminf_{\lambda \to z} |\Theta(\lambda)| = 0\right\}$$
is the {\em spectrum} of $\Theta$, which turns to be the union of the closure the zeros of $\Theta$ along with the support of the singular measure for $\Theta$.  Every $f$ in $\KK_{\Theta}$ has an analytic continuation across $\R \backslash \sigma(\Theta)$. Using the identity \eqref{K-line}, we see that for each $f$ in $\KK_{\Theta}$ there is a corresponding $g$ in $\HH^2$ for which $f = \overline{g}\Theta$ almost everywhere on $\R$. As was done with \eqref{PC-H-D} this allows us to obtain a formula 
\begin{equation*}
\widetilde{f}(z) := \frac{\overline{g(\overline{z})}}{\,\overline{\Theta(\overline{z})}\,} \tag{$z \in \C_{-}$}
\end{equation*}
for a pseudocontinuation of $f$ (to the lower half-plane $\C_{-}$) of bounded type. 

Of particular importance here are the reproducing kernel functions 
\begin{equation*}
K_{\lambda}(z) = \frac{i}{2 \pi} \frac{1 - \overline{\Theta(\lambda)} \Theta(z)}{z - \overline{\lambda}} \tag{$\lambda, z \in \C_{+}$}
\end{equation*}
 These belong to $\KK_{\Theta}$ and satisfy 
 \begin{equation*}
f(\lambda) = \langle f, K_{\lambda}\rangle \tag{$\lambda \in \C_{+}$}
\end{equation*}

\subsection{Clark theory again} 

As to be expected, there is a Clark theory for $\KK_{\Theta}$. Indeed, for an inner function $\Theta$, the function 
$$m = i \frac{1 + \Theta}{1 - \Theta}$$ is analytic on $\C_{+}$ and satisfies
$\Im m \geqslant 0$ there (i.e., $m$ is a \emph{Herglotz function}).  In this setting, 
the Herglotz representation theorem guarantees the existence of
parameters $b \geqslant 0, c \in \R$, and a positive Poisson finite measure $\mu_{\Theta}$ on $\R$ so that 
$$m(\lambda) = b z + c + \frac{1}{\pi} \int \left(\frac{1}{x - \lambda} - \frac{x}{1 + x^2}\right) d\mu_{\Theta}(x).$$ 
Using Poltoratski's result (Theorem \ref{Polt-V}) we can define the operator 
\begin{equation} \label{Q-T}
Q_{\Theta}: \KK_{\Theta} \to L^2(\mu_{\Theta}), \quad Q f = f|C_{\Theta},
\end{equation}
where $C_{\Theta}$ is a carrier for $\Theta$. 
This operator turns out to be unitary. 

As a final remark, the alert reader is probably getting \emph{d\'ej\`a vu} when looking at the function $m$ and its associated inner function $\Theta$. The current discussion is probably reminding the reader of our earlier treatment of Clark measures. Indeed, if one begins with an inner $\Theta$ and looks at the family of inner functions 
$$\{\overline{\alpha} \Theta: \alpha \in \T\},$$
there is an associated family of Poisson finite measures 
$$\{\mu_{\overline{\alpha} \Theta}: \alpha \in \T\},$$
the Clark measures associated with $\Theta$. 
 Many of the properties of Clark measures that hold for inner $u$ on $\D$ have direct analogs for inner functions $\Theta$ on $\C_{+}$.

\subsection{Weyl-Titchmarsh inner functions}\label{Subsection:WT}

In this section we point out a relationship between Schr{\" o}dinger operators and model spaces. In order not to get too deep into technical details, we will be a little vague about proper definitions. The reader looking for precision should consult \cite{MP05, Tes}. 

For a real potential $q$ in $L^2(a, b)$, define the \emph{Schr{\" o}dinger operator}
$$u \mapsto u'' + q u.$$ If one imposes the boundary condition
$$u(b) \cos \phi + u'(b) \sin \phi = 0,$$ this operator becomes a densely defined, self-adjoint operator on $L^2(a, b)$. 

For each $\lambda$ in $\C$ the differential equation $u'' + q u = \lambda u$ has a has a non-trivial solution
 $u_{\lambda}$ and one can form the {\em Weyl-Titchmarsh} function (at $a$) by 
$$m(\lambda) := \frac{u_{\lambda}'(a)}{u_{\lambda}(a)}.$$ 
One can show that the function $\Theta$ defined on $\C_{+}$ by 
$$\Theta := \frac{m - i}{m + i}$$
is inner. 

For each $\lambda \in \C_{+}$ define  
$$w_{\lambda}(x) := \frac{u_{\lambda}(x)}{u_{\lambda}'(a) + i u_{\lambda}(a)}$$ and notice that $w_{\lambda}$ is also a solution to the Schr{\" o}dinger equation $u'' + q u = \lambda u$, the so called {\em normalized solution}. Now define the following transform $W$ on $L^2(a, b)$ (sometimes called the {\em modified Fourier transform}) by 
\begin{equation*}
[W f](\lambda) := \int_{a}^{b} f(x) w_{\lambda}(x) dx \tag{$\lambda \in \C$}
\end{equation*}
 Schr{\" o}dinger operators and model spaces are related by means of the following theorem from \cite{MP05}.

\begin{Theorem}[Makarov-Poltoratski]
The modified Fourier transform $W$ is, up to a factor of $\sqrt{\pi}$, a unitary operator from $L^{2}(a, b)$ onto $\KK_{\Theta}$. Furthermore, $W \overline{w_\lambda} = \pi K_{\lambda}$.
\end{Theorem}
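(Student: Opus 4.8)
The plan is to derive both conclusions from the single reproducing identity $W\overline{w_\lambda}=\pi K_\lambda$, which carries all the analytic content; unitarity then drops out of reproducing-kernel bookkeeping. \textbf{Step 1 (the reproducing identity).} Fix $\lambda,\mu\in\C_{+}$. Since $q$ is real, $\overline{w_\lambda}$ (as a function of $x$) solves $u''+qu=\overline{\lambda}u$ while $w_\mu$ solves $u''+qu=\mu u$; multiplying, subtracting, and integrating over $(a,b)$ gives the Lagrange identity
\begin{equation*}
(\mu-\overline{\lambda})\int_a^b \overline{w_\lambda}\,w_\mu\,dx = \big[\,\overline{w_\lambda}\,w_\mu' - \overline{w_\lambda}'\,w_\mu\,\big]_a^b .
\end{equation*}
The boundary term at $b$ vanishes because $w_\lambda$ and $w_\mu$ both obey the self-adjoint boundary condition $u(b)\cos\phi+u'(b)\sin\phi=0$, so their Cauchy data at $b$ are parallel. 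At $a$ I would substitute the normalization-free values $w_\lambda(a)=(m(\lambda)+i)^{-1}$ and $w_\lambda'(a)=m(\lambda)(m(\lambda)+i)^{-1}$ (and their conjugates), which reduces the right-hand side to $\dfrac{1-\overline{\Theta(\lambda)}\Theta(\mu)}{2i\,(\mu-\overline{\lambda})}$ after inserting the dictionary $\Theta=(m-i)/(m+i)$. Up to the constant fixed by the conventions in the definitions of $w_\lambda$, $m$, and $K_\lambda$, this is precisely $\pi K_\lambda(\mu)$; since the left-hand side is $[W\overline{w_\lambda}](\mu)$ and $K_\lambda\in\KK_\Theta$, this establishes $W\overline{w_\lambda}=\pi K_\lambda$.

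\textbf{Step 2 (isometry on a dense subspace).} The identity of Step 1 shows $W$ intertwines the two Gram structures. For a finite combination $g=\sum_j a_j\overline{w_{\lambda_j}}$, using $\tfrac{1}{\sqrt{\pi}}Wg=\sqrt{\pi}\sum_j a_j K_{\lambda_j}$, the reproducing property $K_{\lambda_j}(\lambda_k)=\langle K_{\lambda_j},K_{\lambda_k}\rangle$, and the relation $\pi K_{\lambda_j}(\lambda_k)=\langle\overline{w_{\lambda_j}},\overline{w_{\lambda_k}}\rangle_{L^2(a,b)}$ coming from Step 1, one computes
\begin{align*}
\Big\langle \tfrac{1}{\sqrt{\pi}}Wg,\tfrac{1}{\sqrt{\pi}}Wg\Big\rangle_{\KK_\Theta}
&= \pi\sum_{j,k} a_j\overline{a_k}\,\langle K_{\lambda_j},K_{\lambda_k}\rangle_{\KK_\Theta} \\
&= \pi\sum_{j,k} a_j\overline{a_k}\,K_{\lambda_j}(\lambda_k) \\
&= \sum_{j,k} a_j\overline{a_k}\,\big\langle \overline{w_{\lambda_j}},\overline{w_{\lambda_k}}\big\rangle_{L^2(a,b)} \\
&= \|g\|_{L^2(a,b)}^2 .
\end{align*}
Hence $\tfrac{1}{\sqrt{\pi}}W$ is isometric on $\operatorname{span}\{\overline{w_\lambda}:\lambda\in\C_{+}\}$ and extends to an isometry on its closure.

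\textbf{Step 3 (the two density statements) and conclusion.} On the target side, $\{K_\lambda\}$ is total in $\KK_\Theta$: if $h\in\KK_\Theta$ is orthogonal to every $K_\lambda$, then $h(\lambda)=\langle h,K_\lambda\rangle=0$ for all $\lambda\in\C_{+}$, so $h\equiv0$. On the source side I must show $\{\overline{w_\lambda}:\lambda\in\C_{+}\}$ is total in $L^2(a,b)$, i.e.\ that $W$ is injective: if $f\perp\overline{w_\lambda}$ for all $\lambda$, then $[Wf](\lambda)=\int_a^b f\,w_\lambda\,dx=0$, and since $w_\lambda(x)$ is analytic in $\lambda$ on $\C_{+}$ the function $Wf$ is analytic and vanishes identically, whence completeness of the generalized eigenfunctions forces $f=0$. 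Granting both densities, the isometry $\tfrac{1}{\sqrt{\pi}}W$ is defined on all of $L^2(a,b)$, its range is closed and contains the total family $\{K_\lambda\}$, hence equals $\KK_\Theta$; therefore $\tfrac{1}{\sqrt{\pi}}W$ is unitary onto $\KK_\Theta$, and $W\overline{w_\lambda}=\pi K_\lambda$ by Step 1.

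\textbf{Main obstacle.} The genuinely hard input is the completeness/injectivity asserted in Step 3: this is exactly the Weyl--Titchmarsh eigenfunction expansion (the spectral theorem for the Schr\"odinger operator), whose proof---via Stieltjes inversion of the resolvent/Green's function and identification of the spectral measure with the Herglotz measure $\mu_\Theta$ of $m$---lies outside the reproducing-kernel computations above. An alternative organization would sidestep reproving it by hand: invoke the spectral theorem directly to obtain a unitary $L^2(a,b)\to L^2(\mu_\Theta)$, then compose with the inverse of the Clark unitary $Q_\Theta:\KK_\Theta\to L^2(\mu_\Theta)$ from \eqref{Q-T}; Steps 1 and 2 are precisely what certify that this composition coincides with $\tfrac{1}{\sqrt{\pi}}W$.
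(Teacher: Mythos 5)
The paper does not prove this theorem: it is quoted from Makarov and Poltoratski \cite{MP05}, and the surrounding subsection explicitly warns that it is ``a little vague about proper definitions,'' so there is no in-paper argument to compare you against. Judged on its own terms, your architecture is the standard and correct one for statements of this kind: derive the single identity $\int_a^b \overline{w_\lambda}\,w_\mu\,dx=\pi K_\lambda(\mu)$ from the Lagrange identity (the boundary term at $b$ killed by the common self-adjoint boundary condition, the boundary term at $a$ producing $1-\overline{\Theta(\lambda)}\Theta(\mu)$ through the dictionary $\Theta=(m-i)/(m+i)$), read it simultaneously as the formula $W\overline{w_\lambda}=\pi K_\lambda$ and as a Gram-matrix identity, deduce that $\tfrac{1}{\sqrt{\pi}}W$ is isometric on $\operatorname{span}\{\overline{w_\lambda}\}$, and finish with the two totality statements. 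Totality of $\{K_\lambda\}$ in $\KK_\Theta$ is immediate from the reproducing property, and you correctly isolate totality of $\{\overline{w_\lambda}\}$ in $L^2(a,b)$ as the one genuinely external input; for the regular problem the paper sets up ($q\in L^2(a,b)$, a self-adjoint condition at $b$) it reduces, as you say, to the classical Sturm--Liouville eigenfunction expansion, since $Wf$ is entire in $\lambda$ and its vanishing on $\C_{+}$ forces it to vanish at the eigenvalues.

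Two points need tightening. First, ``up to the constant fixed by the conventions'' is doing real work in Step 1. Carrying out your own computation with the paper's literal normalizations ($u''+qu=\lambda u$ rather than $-u''+qu=\lambda u$, $m=u_\lambda'(a)/u_\lambda(a)$, and $K_\lambda=\tfrac{i}{2\pi}\bigl(1-\overline{\Theta(\lambda)}\Theta(z)\bigr)/(z-\overline{\lambda})$) yields $\int_a^b\overline{w_\lambda}\,w_\mu\,dx=\bigl(1-\overline{\Theta(\lambda)}\Theta(\mu)\bigr)/\bigl(2i(\mu-\overline{\lambda})\bigr)=-\pi K_\lambda(\mu)$, and the diagonal case $\mu=\lambda$ then gives $\|w_\lambda\|^2<0$ --- so the paper's stated conventions are not internally consistent and the sign cannot be resolved from them. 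This is the paper's fault, not your method's, but a complete proof must fix a consistent set of conventions and verify that the constant is exactly $+\pi$; the diagonal positivity check $2i(\Im\lambda)\|w_\lambda\|^2$ versus $1-|\Theta(\lambda)|^2$ is the cheap way to catch the sign. Second, after extending $\tfrac{1}{\sqrt{\pi}}W$ isometrically from the span, add one line confirming that the extension agrees with the defining integral formula for general $f$: since isometries preserve inner products, $[Wf](\lambda)=\tfrac{1}{\pi}\langle Wf,\pi K_\lambda\rangle=\langle f,\overline{w_\lambda}\rangle_{L^2(a,b)}=\int_a^b f\,w_\lambda\,dx$. With those repairs the argument is complete modulo the classical expansion theorem you cite.
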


With the preceding theorem one can recast the completeness problems for solutions of the Schr{\" o}dinger equation
in $L^2(a, b)$ in terms of the completeness problem for reproducing kernels in $\KK_{\Theta}$. Secondly, the unitary operator $W Q_{\Theta}: L^{2}(a, b) \to L^2(\mu_{\Theta})$ yields a spectral representation of the Schr{\" o}dinger operator $u \mapsto u'' + q u$ in that it is unitarily equivalent to the densely defined operator multiplication by the independent variable on $L^2(\mu_{\Theta})$.
\section{Generalizations of model spaces}

\subsection{Model spaces in $H^p$} 
	One can also consider model subspaces of $H^p$ for $p \neq 2$.
	Recall that if $0 < p < \infty$, then $H^p$ is the space of analytic functions $f$ on $\D$ for which 
	$$\|f\|_{p} := \lim_{r \to 1^{-}} \left(\int_{\T} |f(r \zeta)|^2 dm(\zeta)\right)^{1/p}$$ 
	is finite.  If $p \geqslant 1$, then $H^p$ is a Banach space while if $0 < p < 1$, then $H^p$ is a topological vector space. 
	Almost all of the basic function-theoretic results that hold for $H^2$ functions carry over to $H^p$ (e.g., each $H^p$ has
	an almost everywhere defined nontangential boundary function that belongs to $L^p$).
	
	For $p \neq 2$, the spaces $H^p$ are not Hilbert spaces, although they do have readily identifiable duals:
	\begin{itemize}\addtolength{\itemsep}{0.5\baselineskip}
		\item For $1 < p < \infty$, the dual space $(H^p)^{*}$ can be identified with $H^q$, where $q$ is the H{\" o}lder conjugate index to $p$
			(i.e., $\frac{1}{p}+\frac{1}{q} = 1$).
		\item For $p=1$, the dual space $(H^1)^{*}$ can be identified with the space of all analytic functions 
			of bounded mean oscillation (BMOA),
		\item For $0 < p < 1$, $(H^p)^{*}$ can be identified with certain classes of smooth functions on $\D^{-}$ (Lipschitz or Zygmund classes).
	\end{itemize}
	In each of these cases, the dual pairing is given by the {\em Cauchy pairing}
	\begin{equation}\label{eq:IP}
		\lim_{r \to 1^{-}} \int_{\T} f(r \zeta) \overline{g(r \zeta)} \,dm(\zeta)
	\end{equation}
	where $f \in H^p$ and $g \in (H^p)^{*}$.
	Standard references are \cite{Duren, Garnett}. 

The unilateral shift $S: H^p \to H^p$ defined as usual by $S f = z f$ is continuous on $H^p$
and, for $1 \leqslant p < \infty$, the invariant subspaces of $S$ are still the Beurling-type invariant subspaces $u H^p$ for some inner $u$.
The invariant subspaces of the backward shift\footnote{Much of the literature here abuses notation here and often uses the notation $S^{*} f = (f - f(0))/z$ for the backward shit on $H^p$ even though, technically speaking, the adjoint $S^{*}$ is defined on $(H^p)^{*}$ and not $H^p$.}
$$B f = \frac{f - f(0)}{z}$$ on 
$H^p$ are known to be 
\begin{equation*}
\K_{u}^{p} := H^p \cap u \overline{z H^p} \tag{$1 \leqslant p < \infty$}
\end{equation*}
For $1 < p < \infty$,
the proof relies on using the duality relationship between $(H^p)^{*}$ and $H^q$ via the integral pairing \eqref{eq:IP} to compute the annihilator of $u H^q$. For $p = 1$, the backward shift invariant invariant subspaces are still the same (i.e., 
$\K_u^{1} = H^1 \cap u \overline{z H^1}$),
except that the proof has to deal with some complications from the more difficult dual space of $H^1$. When $0 < p < 1$, the backward shift invariant subspaces are too complicated to describe in this survey. A full accounting all of these results are found in \cite{CR}. Most of the function-theoretic results of this survey for model spaces $\K_u$ in $H^2$ can be restated appropriately for $\K_u^{p}$ spaces -- usually with nearly the same proof. 

\subsection{deBranges-Rovnyak spaces}
There is an important generalization of model spaces, the \emph{deBranges-Rovnyak spaces}, that play an increasingly important role in analysis. 
Unlike the model spaces $\K_{u}$, which are parameterized by inner functions $u$, these spaces are parameterized by functions $u$ in $H^{\infty}_{1}$, the unit ball in $H^{\infty}$ (i.e., $u \in H^{\infty}, \|u\|_{\infty} \leqslant 1$). These spaces often have similar properties as model spaces but they also have many differences. We will not go into great detail here but just point out the existence of these types of spaces and their very basic properties. 
The standard reference for this subject is Sarason's book \cite{MR1289670} from 1994, 
although a new book by Fricain and Mashreghi should appear soon.

For $u \in H^{\infty}_{1}$ (not necessarily inner!), define the kernel function 
\begin{equation*}
k_{\lambda}(z) := \frac{1 - \overline{u(\lambda)} u(z)}{1 - \overline{\lambda} z} .\tag{$\lambda, z \in \D$}
\end{equation*}
 The author will recognize this kernel, when $u$ is an inner function, as the reproducing kernel for the model space $\K_{u}$, but here $u$ is not necessarily inner. We construct a reproducing kernel Hilbert space $\HH(u)$ of analytic functions on $\D$ from this kernel in the following way. First we notice that this kernel is a positive in the sense that
$$\sum_{1 \leqslant l, j \leqslant n} c_{j} \overline{c_l} k_{\lambda_j}(\lambda_l) \geqslant 0$$ for every set of constants $c_1, \ldots, c_n$ and points $\lambda_1, \ldots, \lambda_n$ in $\D$. We initially populate our space $\HH(u)$ with finite linear combinations of kernel functions and define a norm on these functions in a way that makes the $k_{\lambda}$ the reproducing kernels for $\HH(u)$. We do this by defining  
$$\left \|\sum_{j = 1}^{n} c_j k_{\lambda_j}\right\|^2 = \left\langle \sum_{j = 1}^{n} c_j k_{\lambda_j}, \sum_{l = 1}^{n} c_l k_{\lambda_l} \right\rangle := \sum_{1 \leqslant l, j \leqslant n} c_{j} \overline{c_l} k_{\lambda_j}(\lambda_l).$$ One can check that this is a norm and that the corresponding inner product on the vector space of finite linear combinations of kernel functions, make this vector space a pre-Hilbert space. The deBranges-Rovnyak space $\HH(u)$ is the closure of this pre-Hilbert space under this norm.  A more standard way of defining $\mathscr{H}(u)$ in terms of Toeplitz operators is described in \cite{MR1289670}. 

Using the positivity of the kernel, one can prove that $\HH(u)$ is reproducing kernel Hilbert space that is contractively contained in $H^2$. When $\|u\|_{\infty} < 1$, then $\HH(u) = H^2$ with an equivalent norm. On the other hand when $u$ is inner, then $\HH(u) = \K_u$ with equality of norms.

Recall that the compressed shift $S_u$ on $\K_u$ serves as a model for certain types of contraction operators. It turns out that $\HH(u)$ spaces can be used to model other types of contractions where the condition (i) of Theorem \ref{NF-Thm} is somewhat relaxed. 

\section{Truncated Toeplitz operators}
	We do not attempt to give a complete overview of the rapidly 
	developing theory of truncated Toeplitz operators.
	The recent survey article \cite{GR-TTO} and the seminal article \cite{Sarason} of Sarason should be consulted
	for a more thorough treatment.	
	We content ourselves here with a brief summary of the principal definitions and basic results.

	Recall from Subsection \ref{SubsectionBasic} that the Toeplitz operator $T_{\phi}:H^2\to H^2$ with symbol
	$\phi$ in $L^{\infty}$ is defined by $P(\varphi f)$, where $P$ is the orthogonal projection from $L^2$ onto $H^2$. 
	One can show that $T_{\varphi}$ is a bounded operator on $H^2$ that satisfies $T_{\phi}^* = T_{\overline{\phi}}$
	and $\norm{T_{\phi}} = \norm{\phi}_{\infty}$.
	The $(j, k)$ entry of the matrix representation of $T_{\phi}$ with respect to the orthonormal normal basis $\{1, z, z^2, \ldots\}$ of $H^2$ 
	is $\widehat{\varphi}(k - j)$, which yields an infinite Toeplitz matrix (constant along the diagonals). 
	
	An old result of Brown and Halmos \cite{BH} gives a convenient algebraic characterization of Toeplitz operators:
	a bounded operator $T$ on $H^2$ is a Toeplitz operator if and only 
	\begin{equation}\label{eq:BH}
		T = S T S^{*}, 
	\end{equation}
	where $S$ is the unilateral shift on $H^2$.
	Many other things are known about Toeplitz operators: their spectrum, essential spectrum, commutator ideals, 
	algebras generated by certain collections of Toeplitz operators, etc. 
	These topics are all thoroughly discussed in \cite{MR2223704, MR2270722, MR0361893}.

	It is well-known that the commutant 
	\begin{equation*}
		\{S\}' := \{A \in \mathcal{B}(H^2): S A = A S\}
	\end{equation*}
	of the unilateral shift $S$ is equal to the set
	\begin{equation*}
		\{T_{\phi}: \phi \in H^{\infty}\}
	\end{equation*}
	of all \emph{analytic} Toeplitz operators.  This naturally leads one to consider the commutant
	$\{S_u\}'$ of the compressed shift $S_u$.  The answer, due to Sarason \cite{Sarason-NF}, 
	prompts us to consider the following.
	
	\begin{Definition}
		If $u$ is inner and $\phi \in L^{\infty}$, then the \emph{truncated Toeplitz operator} (TTO) with symbol $\phi$
		is the operator $A_{\phi}^u:\K_u\to\K_u$ defined by $A^{u}_{\phi} f = P_u (\phi f)$, where
		$P_u$ is the orthogonal projection \eqref{Pu-def} of $L^2$ onto $\K_u$. 
	\end{Definition}

	\begin{Theorem}[Sarason]
		For inner $u$, 
		\begin{equation*}
			\{S_{u}\}' = \{A^{u}_{\phi}: \phi \in H^{\infty}\}.
		\end{equation*}
	\end{Theorem}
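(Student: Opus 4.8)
The plan is to prove the two inclusions separately. The inclusion $\{A^{u}_{\phi} : \phi \in H^{\infty}\} \subseteq \{S_u\}'$ is an elementary computation, while the reverse inclusion is the substantial part and is exactly Sarason's lifting theorem. For the easy inclusion, fix $\phi \in H^{\infty}$ and $f \in \K_u$. Since $\phi f \in H^2$ and the orthogonal complement of $\K_u$ inside $H^2$ is $uH^2$, I may write $P_u(\phi f) = \phi f - u h_1$ and $P_u(zf) = zf - uh_2$ with $h_1, h_2 \in H^2$. Because $uH^2$ is invariant under multiplication by $z$ and by $\phi$, both $z u h_1$ and $\phi u h_2$ lie in $uH^2$ and are therefore annihilated by $P_u$. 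Hence
$$S_u A^{u}_{\phi} f = P_u\big(z P_u(\phi f)\big) = P_u(z\phi f) = P_u\big(\phi P_u(zf)\big) = A^{u}_{\phi} S_u f,$$
so $A^{u}_{\phi}$ commutes with $S_u$. (Equivalently, one may note that $A^{u}_{\phi} = \phi(S_u)$ via the $H^{\infty}$-functional calculus of the contraction $S_u$, and operators in the functional calculus automatically commute with $S_u$.)

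For the hard inclusion $\{S_u\}' \subseteq \{A^{u}_{\phi}\}$, I would invoke the commutant lifting theorem of Sz.-Nagy and Foia\c{s}. The key structural observation is that $\K_u = H^2 \ominus uH^2$ is co-invariant for the shift $S$ (indeed $S^{*}\K_u \subseteq \K_u$), so that $S_u = P_u S|_{\K_u}$ realizes $S$ as the minimal isometric dilation of $S_u$; in particular $S_u^{n} = P_u S^{n}|_{\K_u}$ for all $n$, reflecting that $\K_u$ is semi-invariant. Given $A \in \{S_u\}'$, commutant lifting produces an operator $Y$ on $H^2$ commuting with $S$, with $\norm{Y} = \norm{A}$ and $P_u Y|_{\K_u} = A$. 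Since the commutant $\{S\}'$ equals the algebra $\{T_{\phi} : \phi \in H^{\infty}\}$ of analytic Toeplitz operators (recalled just above the statement), we have $Y = T_{\phi}$ for some $\phi \in H^{\infty}$. As $T_{\phi}$ preserves $uH^2$, it is block lower-triangular with respect to $H^2 = \K_u \oplus uH^2$, and its compression to $\K_u$ is precisely $A^{u}_{\phi}$; thus $A = P_u T_{\phi}|_{\K_u} = A^{u}_{\phi}$, which completes the proof.

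The real content — and the main obstacle — is the existence of the norm-preserving lift $Y$, that is, the commutant lifting theorem itself. If one wishes to keep the argument self-contained rather than citing it as a black box, the crux is a single norm-preserving extension step: one extends $A$ across one ``shift layer'' (a Parrott-type two-by-two operator-matrix completion that does not increase the norm), then iterates this construction and passes to a weak-operator limit. Here one uses that $S_u$ is completely non-unitary of class $C_{\cdot 0}$, i.e. $S_u^{*n} \to 0$ strongly (which holds because $S_u^{*} = S^{*}|_{\K_u}$ and $S^{*n} \to 0$ strongly on $H^2$), to guarantee both that the limit exists and that it genuinely commutes with $S$ on all of $H^2$. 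This extension-and-convergence step, not the bookkeeping with $\K_u$ and $T_{\phi}$, is where all the difficulty resides.
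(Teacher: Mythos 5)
The survey states this theorem without proof (it only cites Sarason), so there is no in-paper argument to compare against; your proposal supplies the standard modern proof, and it is essentially correct. The easy inclusion is fine: writing $P_u(\phi f)=\phi f-uh_1$ and $P_u(zf)=zf-uh_2$ and using that $uH^2$ is invariant under multiplication by $z$ and by $\phi$ reduces both $S_uA^u_\phi f$ and $A^u_\phi S_uf$ to $P_u(z\phi f)$. The hard inclusion is correctly reduced to the commutant lifting theorem plus the identification $\{S\}'=\{T_\phi:\phi\in H^\infty\}$ recalled in the survey; once $Y=T_\phi$ is produced, $A=P_uT_\phi|_{\K_u}=A^u_\phi$ is immediate since $\phi f\in H^2$ for $f\in\K_u$. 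Three remarks. First, you assert without proof that $S$ on $H^2$ is the \emph{minimal} isometric dilation of $S_u$; minimality requires checking $\bigvee_{n\geqslant 0}z^n\K_u=H^2$, which holds for nonconstant inner $u$ because $k_0=1-\overline{u(0)}u$ and $zS^*u=u-u(0)$ both lie in $\K_u\cup z\K_u$ and sum (after the obvious combination) to the constant $1-|u(0)|^2\neq 0$. Second, your closing claim that the $C_{\cdot 0}$ property $S_u^{*n}\to 0$ is what makes the iterated Parrott-completion argument converge is not accurate: commutant lifting holds for arbitrary contractions and its standard proof does not use this hypothesis, so that sentence should be dropped or corrected. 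Third, as the survey itself hints (``initiated the general study of commutant lifting theorems''), the historical logic runs the other way: Sarason proved this result first by a direct argument, and the Sz.-Nagy--Foia\c{s} commutant lifting theorem was the subsequent abstraction. Invoking commutant lifting is mathematically legitimate, since it has independent proofs, but it does place the entire substance of the theorem inside a black box rather than proving anything specific to $\K_u$.
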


	This theorem, which initiated the general study of \emph{commutant lifting theorems}, gives an initial
	impetus for the study of truncated Toeplitz operators.  However, to obtain the full class of truncated Toeplitz operators, 
	one needs to consider symbols in $L^2$, as opposed to $L^{\infty}$. For $\phi$ in $L^2$, one can 
	define an operator $A_{\phi}^u$ on $\K_u \cap H^{\infty}$ (which is dense in $\K_u$ via Proposition \ref{P-H-dense}) by 
	\begin{equation*}
		A^u_{\phi} f := P_{u}(\varphi f).
	\end{equation*}
	If $A_{\phi}^u$ can be extended to a bounded operator on all of $\K_u$, then we say $A_{\phi}^u$ 
	is a bounded truncated Toeplitz operator.  When there is no chance of confusion, we write $A_{\phi}$
	instead of $A_{\phi}^u$.  We let $\mathcal{T}_{u}$ denote 
	the set of all bounded truncated Toeplitz operators on $\K_u$, remarking that $\mathcal{T}_u$ is a
	weakly closed linear space. 

	There are some superficial similarities between truncated Toeplitz operators and Toeplitz operators. 
	For instance, one has the adjoint formula $A_{\varphi}^{*} = A_{\overline{\varphi}}$ and an analogue of the Brown-Halmos result
	\eqref{eq:BH}:  a bounded operator $A$ on $\K_u$ is a truncated Toeplitz operator if and only if 
	\begin{equation*}
		A_z A A_z^* = A + R,
	\end{equation*}
	where $R$ is a certain operator on $\K_u$ of rank $\leqslant 2$. However, the differences are greater than the similarities.
	For instance, the symbol of a Toeplitz operator is unique, yet for truncated Toeplitz operators
	$A_{\varphi} = A_{\psi}$ if and and only if $$\varphi - \psi \in u H^2 + \overline{u H^2}.$$  In particular,
	the inequality $\|A_{\varphi}\| \leqslant \|\varphi\|_{\infty}$ is frequently strict.
	While there are no compact Toeplitz operators, many compact TTOs exist (e.g., any TTO on $\K_u$ if $\dim\K_u < \infty$). Finally, not every bounded truncated Toeplitz operator can be represented with a bounded symbol.
	
	Recall from Section \ref{SectionConjugation} that $Cf = \overline{fz}u$ defines a conjugation on $\K_u$.
	It turns out that each truncated Toeplitz operator is \emph{complex symmetric}, in the sense that
	$A_{\phi} = CA_{\phi}^*C$.  More generally, we say that a bounded operator $T$ on a complex Hilbert space $\h$
	is a complex symmetric operator (CSO) 
	if there exists a conjugation $C$ on $\h$ so that $T = CT^*C$ \cite{G-P,G-P-II,CSPI, SNCSO,CCO}.
	The class of complex symmetric operators is surprisingly large and, somewhat surprisingly, many CSOs
	can be shown to be unitarily equivalent to truncated Toeplitz operators \cite{GR-TTO, TTOSIUES, STZ, UETTOAS}.
	Clarifying the precise relationship between CSOs and TTOs is an ongoing effort, chronicled to some extent in \cite{GR-TTO}.

	Finally, we mention the following beautiful connection between Clark's unitary operators (Section \ref{Section:Clark})
	and truncated Toeplitz operators.
	
	\begin{Theorem}
		If $\phi \in L^{\infty}$ and $u$ is inner, then 
		$$A_{\phi}^u = \int \varphi(U_{\alpha}) \,dm(\alpha),$$
		in the sense that
		\begin{equation*}
		\langle A_{\phi}^u f, g \rangle  
		= \int \langle \phi(U_{\alpha}) f, g\rangle \,dm(\alpha),
		\end{equation*}
		for $f,g$ in $\K_u$.
	\end{Theorem}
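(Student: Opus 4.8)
The plan is to pass to the spectral model of each Clark unitary $U_\alpha$, rewrite $\langle \phi(U_\alpha)f,g\rangle$ as an integral against the Clark measure $\sigma_\alpha$, and then collapse the outer integral over $\alpha$ by means of Aleksandrov's disintegration theorem. First I would assume $u(0)=0$, so that the constant $1$ and the function $u/z$ lie in $\K_u$ and the family $\{U_\alpha\}$ of \eqref{CUO} is defined; the general case follows by a Crofoot transform (Proposition \ref{three-unitary}(1)), which is a unitary that carries truncated Toeplitz operators and Clark unitaries to their counterparts, so no generality is lost.

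Next I would invoke Clark's spectral representation: the map $V_\alpha\colon L^2(\sigma_\alpha)\to\K_u$ is unitary and satisfies $V_\alpha Z_\alpha=U_\alpha V_\alpha$, where $Z_\alpha$ is multiplication by the coordinate on $L^2(\sigma_\alpha)$. Fixing a bounded Borel representative of $\phi$, the Borel functional calculus gives $\phi(U_\alpha)=V_\alpha M_\phi V_\alpha^{*}$, where $M_\phi$ denotes multiplication by $\phi$ on $L^2(\sigma_\alpha)$. Therefore, for $f,g\in\K_u$,
\[
\langle \phi(U_\alpha)f,g\rangle=\langle M_\phi V_\alpha^{*}f,\,V_\alpha^{*}g\rangle_{L^2(\sigma_\alpha)}=\int_{\T}\phi(\zeta)\,(V_\alpha^{*}f)(\zeta)\,\overline{(V_\alpha^{*}g)(\zeta)}\,d\sigma_\alpha(\zeta).
\]
The crucial simplification comes from Poltoratski's theorem (Theorem \ref{Polt-V}): writing $f=V_\alpha(V_\alpha^{*}f)$ and taking radial limits shows that $(V_\alpha^{*}f)(\zeta)$ equals the boundary value $f(\zeta)$ for $\sigma_\alpha$-almost every $\zeta$, and likewise for $g$. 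Hence
\[
\langle \phi(U_\alpha)f,g\rangle=\int_{\T}\phi(\zeta)f(\zeta)\overline{g(\zeta)}\,d\sigma_\alpha(\zeta).
\]

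Now I would integrate in $\alpha$ and apply the disintegration theorem. The integrand $h:=\phi f\overline{g}$ lies in $L^1(\T,m)$ because $\phi\in L^\infty$ and $f,g\in\K_u\subset H^2$. Applying the $L^1$ version of Aleksandrov's theorem (Theorem \ref{ADC} together with the subsequent Remark) to $h$ yields
\[
\int_{\T}\langle \phi(U_\alpha)f,g\rangle\,dm(\alpha)=\int_{\T}\Big(\int_{\T}h(\zeta)\,d\sigma_\alpha(\zeta)\Big)\,dm(\alpha)=\int_{\T}h(\xi)\,dm(\xi)=\langle \phi f,g\rangle_{L^2}.
\]
Finally, since $g\in\K_u$ we have $P_u g=g$, so the self-adjointness of $P_u$ gives $\langle \phi f,g\rangle=\langle \phi f,P_u g\rangle=\langle P_u(\phi f),g\rangle=\langle A^u_\phi f,g\rangle$, which is exactly the asserted identity.

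I expect the main obstacle to be the two measure-theoretic identifications that make the outer integral legitimate. First, for a fixed $\alpha$ the equality $(V_\alpha^*f)(\zeta)=f(\zeta)$ holds only $\sigma_\alpha$-a.e., and since $\sigma_\alpha\perp m$ this boundary identification genuinely relies on the strength of Poltoratski's theorem rather than on ordinary $H^2$ boundary theory. Second, one must know that $\alpha\mapsto\int_{\T} h\,d\sigma_\alpha$ is $m$-measurable and integrable for a merely $L^1$ integrand $h$ — a point that is delicate precisely because $\sigma_\alpha$ can carry point masses while $h$ is defined only $m$-a.e. This is the content of the extended form of Aleksandrov's disintegration theorem, and it is the technical heart on which the whole interchange of integrals rests.
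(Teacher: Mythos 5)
Your proof is correct. Note that the paper itself states this theorem without proof (it appears as a closing remark in the section on truncated Toeplitz operators), so there is no in-text argument to compare against; the route you take --- Clark's spectral representation $V_\alpha Z_\alpha = U_\alpha V_\alpha$, Poltoratski's theorem (Theorem \ref{Polt-V}) to identify $V_\alpha^{*}f$ with the nontangential boundary values of $f$ $\sigma_\alpha$-almost everywhere, and the $L^1$ form of Aleksandrov's disintegration theorem to collapse the outer integral --- is precisely the standard argument for this result, and you have correctly isolated the two measure-theoretic pressure points: the need to fix a Borel representative of $\phi$ (forced by $\sigma_\alpha \perp m$, and harmless after averaging since a set of $m$-measure zero is $\sigma_\alpha$-null for $m$-a.e.\ $\alpha$ by the disintegration formula itself), and the measurability and integrability of $\alpha \mapsto \int \phi f \overline{g}\,d\sigma_\alpha$ for the merely $L^1$ integrand $\phi f\overline{g}$. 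Two small remarks. First, the theorem only makes literal sense when $u(0)=0$, since that is the hypothesis under which \eqref{CUO} defines $U_\alpha$, so your opening normalization is really an alignment with the statement rather than a reduction. Second, the passing claim that a Crofoot transform carries the Clark family of $u$ to that of the transformed inner function is plausible but would itself require an argument (the parametrization of the rank-one unitary perturbations, and hence the measure $dm(\alpha)$ in the average, must be tracked through the transform); fortunately nothing in the body of your proof depends on it.
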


\section{Things we did not mention}
In this final section we point the reader in the direction of other aspects of model spaces
that we do not have the time to fully explore.

\subsection{Carleson measures} Carleson in \cite{LC-58, LC-62} (see also \cite{Garnett})
characterized those measures $\mu$ on $\D$ for which the inclusion operator from $H^2$ to $L^2(\mu)$ is continuous, i.e., 
\begin{equation*}
\int_{\D} |f|^2 d \mu \leqslant C \|f\|^2 \tag{$f \in H^2$}
\end{equation*}
where $C > 0$ is a positive number independent of $f$. This result has been generalized to many other Hilbert spaces of analytic functions. What are the Carleson measures for the model spaces? A discussion of these measures along with historical references can be found in \cite{BFGHR}. 

\subsection{Vector-valued model spaces} 
One can form the vector-valued Hardy space $H^2$ of $\C^n$-valued analytic functions on $\D$ for which the integral means 
$$\int_{\T} \|f(r \zeta)\|^{2}_{\C^n} dm(\zeta)$$ are bounded as $r\to1^-$.  For a matrix valued inner function $U$ on $\D$ (i.e., a matrix-valued analytic function $U(z)$ on $\D$ that satisfies $\|U\| \leqslant 1$ on $\D$ and which is unitary valued a.e.~on $\T$), we can define the $\C^n$-valued model space $H^2 \ominus U H^2$. The compression of the shift to these model spaces are the model operators for contractions via a similar Sz.-Nagy-Foia\c{s} theory for higher defect indices. Some well-known sources for all this operator theory are \cite{Bercovici, N1,N3, RR}. There is even a Clark theory for these vector-valued model spaces including a version of the Aleksandrov disintegration theorem (Theorem \ref{ADC})\cite{MR2652521, MR3079829}.

\subsection{Hankel operators}
There are strong connections between model spaces and Hankel operators.
The standard texts on the subject are \cite{Peller, N2,N3}.

\subsection{Extremal problems} There is a nice connection between model spaces, truncated Toeplitz operators, and classical extremal problems. For a rational function $\psi$ whose poles are in $\D$, a classical problem in complex analysis is to compute the quantity 
\begin{equation}\label{extremal-P}
\sup_{f \in H^{1}_{1}} \left| \frac{1}{2 \pi i} \oint_{\T} \psi(\zeta) f(\zeta) d \zeta\right|,
\end{equation}
along with the functions $f$ for which the above supremum is achieved. In the above definition, $H^{1}_{1}$ denotes the unit ball in the Hardy space $H^1$. These types of problems and extensions of them, where $\psi$ belongs to $L^{\infty}$ and is not assumed to be rational, have been studied since the early twentieth century. We refer the reader to \cite{NLEPHS} for a survey of these classical results. In that same paper is the following result:

\begin{Theorem} 
If $\psi$ is a rational function whose poles are contained in $\D$ then
\begin{equation} \label{Ross-Garcia}
\sup_{f \in H^{1}_{1}} \left| \frac{1}{2 \pi i} \oint_{\T} \psi(\zeta) f(\zeta) d \zeta\right| = \sup_{g \in H^{2}_{2}}  \left| \frac{1}{2 \pi i} \oint_{\T} \psi(\zeta) g^2(\zeta) d \zeta\right|.
\end{equation}
Furthermore, if $u$ is the finite Blaschke product whose zeros are precisely those poles with corresponding multiplicities and if $\phi := u \psi$, then 
$$\sup_{f \in H^{1}_{1}} \left| \frac{1}{2 \pi i} \oint_{\T} \psi(\zeta) f(\zeta) d \zeta\right| = \|A^{u}_{\phi}\|,$$
where $A^{u}_{\phi}$ is the analytic truncated Toeplitz operator on $\K_u$ with symbol $\phi$. 
\end{Theorem}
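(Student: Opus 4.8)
The plan is to route every quantity through the single number $\|A^{u}_{\phi}\|$, using the conjugation $C$ on $\K_u$ from Section~\ref{SectionConjugation}. First I would record the setup. Since the poles of $\psi$ lie in $\D$ and $u$ is the finite Blaschke product vanishing exactly there (to the right multiplicities), the product $\phi = u\psi$ is rational with poles only outside $\overline{\D}$, so $\phi \in H^{\infty}$; moreover $\psi$ is continuous on $\T$ with $\psi = \phi\overline{u}$ there. Writing the contour integral as a boundary integral and using $d\zeta/(2\pi i) = \zeta\,dm$ on $\T$, I set
\[
L(f) := \frac{1}{2\pi i}\oint_{\T}\psi f\,d\zeta = \int_{\T}\psi f\,\zeta\,dm ,
\]
a bounded functional on $H^1$ which, by Cauchy's theorem, sees only the principal parts of $\psi$ at its poles; this is the structural reason $\K_u$ (a finite–dimensional space) governs the problem.

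The key step is a bilinear identity. For $g,h\in H^2$ I would use $\psi=\phi\overline{u}$ and the boundary formula $Cf=\overline{fz}u$ to get $L(gh)=\langle \phi g, Ch\rangle$, where $C$ is extended to $H^2$ via the same boundary expression. Decomposing $h=P_uh+uh_1$ and $g=P_ug+ug_1$, I would observe that $\phi g\in H^2\perp\overline{zH^2}$ kills the $uh_1$–part of $Ch$, while $\phi u g_1\in uH^2$ lets me replace $P_u(\phi g)$ by $A^{u}_{\phi}P_ug$. This yields
\[
L(gh)=\langle A^{u}_{\phi}P_ug,\, C\,P_uh\rangle , \qquad g,h\in H^2,
\]
and in particular $L(gh)=\langle A^{u}_{\phi}g, Ch\rangle$ when $g,h\in\K_u$. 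With this identity the bounds that sandwich $\|A^{u}_{\phi}\|$ are quick. For $\sup_{f\in H^{1}_{1}}|L(f)|\le\|A^{u}_{\phi}\|$, invoke the Riesz factorization: any $f\in H^{1}_{1}$ factors as $f=gh$ with $g,h\in H^2$ and $\|g\|_2=\|h\|_2=\|f\|_1^{1/2}\le1$, so $|L(f)|=|\langle A^{u}_{\phi}P_ug,\,CP_uh\rangle|\le\|A^{u}_{\phi}\|$ since $C$ is isometric and $P_u$ contractive. The inclusion $\{g^2:g\in H^{2}_{2}\}\subseteq H^{1}_{1}$ (because $\|g^2\|_1=\|g\|_2^2$) gives $\sup_{g\in H^{2}_{2}}|L(g^2)|\le\sup_{f\in H^{1}_{1}}|L(f)|$ for free.

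The crux, and the one place complex symmetry is indispensable, is the reverse bound $\|A^{u}_{\phi}\|\le\sup_{g\in H^{2}_{2}}|L(g^2)|$. Taking $g=h=x\in\K_u$ gives $L(x^2)=\langle A^{u}_{\phi}x, Cx\rangle$, so it suffices to produce a unit vector $x_0\in\K_u$ with $L(x_0^2)=\|A^{u}_{\phi}\|$. Because $A^{u}_{\phi}$ is complex symmetric, $A^{u}_{\phi}=C(A^{u}_{\phi})^{*}C$, and $\K_u$ is finite dimensional, I would apply the Autonne--Takagi factorization: in a $C$-real orthonormal basis $A^{u}_{\phi}$ is a complex symmetric matrix $T=T^{t}$, which factors as $T=V\Sigma V^{t}$ with $V$ unitary and $\Sigma=\diag(\sigma_1\ge\sigma_2\ge\cdots\ge 0)$. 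A short computation (using $V^{t}\overline{V}=I$) shows the leading column $v_1$ satisfies $T\overline{v_1}=\sigma_1 v_1$, i.e. $A^{u}_{\phi}x_0=\|A^{u}_{\phi}\|\,Cx_0$ for the unit vector $x_0$ corresponding to $\overline{v_1}$; hence $L(x_0^2)=\langle\|A^{u}_{\phi}\|Cx_0,Cx_0\rangle=\|A^{u}_{\phi}\|$. Chaining the three inequalities
\[
\sup_{f\in H^{1}_{1}}|L(f)| \;\le\; \|A^{u}_{\phi}\| \;\le\; \sup_{g\in H^{2}_{2}}|L(g^2)| \;\le\; \sup_{f\in H^{1}_{1}}|L(f)|
\]
forces equality throughout, which is precisely both displayed identities. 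I expect this Takagi/maximal–vector step to be the main obstacle: it is what upgrades the bilinear estimate into a genuine square $x_0^2$ and thereby pins the linear $H^1$ extremal value to the nonlinear $H^2$ problem, the rest being bookkeeping with the conjugation and the projection $P_u$.
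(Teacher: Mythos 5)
Your argument is correct, and it is essentially the argument of the source the survey cites for this theorem (\cite{NLEPHS}); the survey itself states the result without proof. The bilinear identity $L(gh)=\langle A^{u}_{\phi}P_ug,\,CP_uh\rangle$, Riesz factorization for the upper bound, and the Takagi/antilinear-eigenvector step producing a unit $x_0\in\K_u$ with $A^{u}_{\phi}x_0=\|A^{u}_{\phi}\|\,Cx_0$ (so that $L(x_0^2)=\|A^{u}_{\phi}\|$) are exactly the mechanism used there, with your reading of $H^{2}_{2}$ as the unit ball of $H^2$ being the intended one.
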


In fact, the linear and quadratic supremums in \eqref{Ross-Garcia} are the same for general $\psi$ in $L^{\infty}(\T)$ \cite{MR2597679}.

\bibliography{ModelSurvey} 

\end{document}